\newtheorem{theo}{Theorem}[section]
\newtheorem{prop}[theo]{Proposition}
\newtheorem{claim}[theo]{Claim}
\newtheorem{lemm}[theo]{Lemma}
\newtheorem{coro}[theo]{Corollary}
\newtheorem{rema}[theo]{Remark}
\newtheorem{example}[theo]{Example}
\title{Rank $2$ vector bundles and   degrees of points of del Pezzo surfaces}
\author{Claire Voisin\footnote{The author is supported by the ERC Synergy Grant HyperK (Grant agreement No. 854361).}}
\date{}
\newfont{\gothic}{eufb10}
\begin{document}

\maketitle
\begin{abstract} We study  points and $0$-cycles on del Pezzo surfaces defined over a field $K$ of characteristic $0$, with emphasis on cubic surfaces. We prove that a cubic surface that admits a point defined over a field extension of $K$  of degree coprime to $3$ either has a $K$-point or has a point defined over a field extension of degree $4$. This improves a result of Coray (who allowed also field extensions of degree $10$). We also prove that $0$-cycles of degree $\geq 18$ on a cubic surface are effective and get similar results for degree-$2$ and degree-$1$ del Pezzo surfaces, improving results of Colliot-Th\'{e}l\`{e}ne. In a different direction, we prove that the third symmetric product of a cubic hypersurface of dimension $\geq 2$ is unirational over any field, and that, in dimension $2$  and $3$, it is not  stably rational in general.
 \end{abstract}
\section{Introduction}
Let $X\subset \mathbb{P}^n$ be a hypersurface of degree $d$ over a field $K$. When $K$ is not algebraically closed, it can happen  that not only does $X$ have no $K$-point, but all $L$-points of $X$ are defined over fields extensions $ L\supset K$ of degree divisible by $d$. The typical example is given by the generic hypersurface, where the field $K$ is the function field
$k(B)$ for some field $k$, the base $B$ being the space $\mathbb{P}(H^0(\mathbb{P}^n,\mathcal{O}_{\mathbb{P}^n}(d)))$. We take for $X$ the generic fiber $\mathcal{X}_\eta\rightarrow {\rm Spec}(K)$ of the universal family
$B\times \mathbb{P}^n\supset\mathcal{X}\stackrel{\pi}{\rightarrow} B$, where $\pi$ is given by the first projection. The $L$-points of $\mathcal{X}_\eta$ correspond,  by taking  their Zariski closure in $\mathcal{X}$, to rational multisections of $\pi$. Using the fact that $\mathcal{X}$ is a projective bundle over $\mathbb{P}^n$ via the second projection, one  sees that the restriction map  ${\rm CH}(\mathcal{X})\rightarrow {\rm CH}({X})$ has the same image as the restriction map
${\rm CH}(\mathbb{P}^n)\rightarrow {\rm CH}({X})$, which implies in particular that all $0$-cycles of $X$ are of degree divisible by $d$.

The first part of this paper is devoted to hypersurfaces of degree $3$. The above argument shows that a cubic  hypersurface over a field $K$ is not in general unirational (in particular, it is not rational) since it has no $K$-point. This argument fails however for the third punctual Hilbert scheme $X^{[3]}$  of $X$, that always contains many $K$-points, namely all  those obtained by intersecting $X$ with a line in $\mathbb{P}^n$. As a consequence of Fogarty's theorem \cite{fogarty}, the third punctual Hilbert scheme $X^{[3]}$ of a smooth variety $X$  is smooth and birational to the symmetric product  $X^{(3)}$ (see also \cite{cheah}, which proves that this statement is  true only for the second and third punctual Hilbert schemes). We will be interested in $K$-points of $X^{[3]}$ corresponding to $L$-points of $X$ for some field extension $K\subset L$ of degree $3$, that is, after restriction to an affine open set $U$ of $X$, to morphisms $\Gamma(\mathcal{O}_U)\rightarrow L$ of $K$-algebras. Thus the Hilbert scheme viewpoint is more natural than the viewpoint of the symmetric product, but in fact, these $K$-points of $X^{[3]}$ give rise to three distinct $\overline{K}$-points of $X$ permuted by the Galois group, so the Hilbert scheme is isomorphic to the symmetric product at these points and we could work as well with the symmetric product $X^{(3)}$.

We will start by proving the following easy but useful result.

\begin{theo} \label{theounirat} Let $X\subset \mathbb{P}^n_K$ be a smooth cubic hypersurface defined over a field $K$ of characteristic $0$. If $n\geq 3$, the variety $X^{[3]}$ is unirational.
\end{theo}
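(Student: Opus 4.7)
The plan is to combine the canonical $K$-rational point of $X^{[3]}$ coming from a $K$-line in $\mathbb{P}^n$ with Segre's classical unirationality theorem and Weil restriction. Even when $X$ has no $K$-rational point, $X^{[3]}$ is pointed via the obvious degree-$3$ cycle, and Segre's construction for pointed cubic hypersurfaces descends after Weil restriction to a rational parametrization of $X^{[3]}$.

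First I would pick a general $K$-line $L_0 \subset \mathbb{P}^n$, which exists since $K$ is infinite (characteristic zero) and $G(2,n+1)$ is $K$-rational. For $L_0$ general, the intersection $\xi_0 := L_0 \cap X$ is a reduced length-$3$ closed subscheme; let $A := \Gamma(\xi_0, \mathcal{O}_{\xi_0})$, a rank-$3$ \'etale $K$-algebra (a cubic field extension, or $K \times L'$ with $[L':K]=2$, or $K^3$). The inclusion $\xi_0 \hookrightarrow X$ is an $A$-valued point $y_0 \in X(A)$.

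Next, $X_A := X \times_K \mathrm{Spec}(A)$ is a smooth cubic hypersurface of relative dimension $n-1 \geq 2$ over $\mathrm{Spec}(A)$ equipped with the section $y_0$, so Segre's theorem (applied factorwise when $A$ is not a field) yields a dominant $A$-rational map $\mathbb{A}^{n-1}_A \dashrightarrow X_A$. Taking Weil restrictions produces a dominant $K$-rational map
\[
\mathbb{A}^{3(n-1)}_K \;\cong\; \mathrm{Res}_{A/K}(\mathbb{A}^{n-1}_A) \;\dashrightarrow\; \mathrm{Res}_{A/K}(X_A),
\]
so $\mathrm{Res}_{A/K}(X_A)$ is $K$-unirational of dimension $3(n-1) = \dim X^{[3]}$. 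To finish, I would construct a natural $K$-rational map $\mathrm{Res}_{A/K}(X_A) \dashrightarrow X^{[3]}$ sending an $(A \otimes_K S)$-valued point of $X$ to the length-$3$ closed subscheme of $X \times_K S$ that it cuts out (defined on the open locus where the associated map $\mathrm{Spec}(A \otimes_K S) \to X \times_K S$ is a closed immersion, which is automatic generically). Dominance is checked after base change to $\overline{K}$: one has $A \otimes_K \overline{K} \cong \overline{K}^3$, so $\mathrm{Res}_{A/K}(X_A)_{\overline{K}} \cong X_{\overline{K}}^3$, and the map becomes the natural quotient $X_{\overline{K}}^3 \to X_{\overline{K}}^{(3)}$ followed by the birational equivalence $X_{\overline{K}}^{(3)} \dashrightarrow X_{\overline{K}}^{[3]}$ recalled in the paper. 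Composing all the pieces gives the desired dominant $K$-rational map $\mathbb{A}^{3(n-1)}_K \dashrightarrow X^{[3]}$.

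The main substantive input is Segre's unirationality theorem; the rest is essentially formal. The only delicate point I expect to need care is verifying that the descent $\mathrm{Res}_{A/K}(X_A) \dashrightarrow X^{[3]}$ is a morphism of $K$-schemes on a dense open (not merely on functor points), which reduces to applying the universal property of $X^{[3]}$ to the tautological degree-$3$ closed subscheme $\mathrm{Spec}(A \otimes_K \mathcal{O}_{\mathrm{Res}_{A/K}(X_A)}) \hookrightarrow X \times_K \mathrm{Res}_{A/K}(X_A)$ on the locus where it is flat of constant rank.
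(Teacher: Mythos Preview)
Your argument is correct and takes a genuinely different route from the paper. The paper builds an explicit dominant rational map $\Psi_{-1}\colon G\times G\dashrightarrow X^{[3]}$ (with $G$ the Grassmannian of lines in $\mathbb{P}^n$) by using the elliptic pencil obtained from linear projection from a line and the multiplication-by-$(-2)$ self-map on the fibers; dominance is then verified by an intersection-theoretic computation on a cubic surface section. You instead treat the unirationality of a pointed smooth cubic of dimension $\geq 2$ as a black box (Segre for surfaces, Koll\'ar in general---you should cite the latter for $n>3$), apply it over the cubic \'etale $K$-algebra $A$ cut out by a generic line, and descend via Weil restriction and the natural quotient $\mathrm{Res}_{A/K}(X_A)\dashrightarrow X^{(3)}$. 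Your approach is shorter and more conceptual once the Segre--Koll\'ar input is granted, and it also recovers the refinement of Corollary~\ref{corodense} (the $K$-points produced all have cycle class $h_3$, since any two $K$-points of your affine space are joined by a $K$-rational curve whose image in $X^{[3]}$ gives the rational equivalence). The paper's approach, by contrast, is entirely self-contained---it does not invoke any external unirationality theorem---and its explicit construction is of independent interest for the later study of rational self-maps of $X^{[3]}$ and the question of stable rationality addressed in Theorem~\ref{propassr}.
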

\begin{rema}{\rm  Our proof provides an  explicit unirational parameterization. Moritz Hartlieb and Matteo Verni observed  that one can also prove Theorem \ref{theounirat} using \cite[Proposition 37]{kollarmella}, as any smooth cubic hypersurface of dimension at least $2$  defined over a field $K$ and having  a $K$-point is unirational.}
\end{rema}
Theorem \ref{theounirat}  immediately implies the following Corollary \ref{corodense}, which answers in particular (in the negative)  the question, asked in \cite[Question 6.2]{maq} and discussed in \cite{colliot}, whether for some  cubic hypersurfaces of dimension $\geq 2$ over a field $K$, all $K$-points of $X^{[3]}$ could be obtained by intersecting a line with $X$. Given a cubic hypersurface $X\subset \mathbb{P}^n$, there is a canonical $0$-cycle \begin{eqnarray}\label{eqhtrois} h_3\in {\rm CH}_0(X)\end{eqnarray}
 defined as the intersection of a line in $\mathbb{P}^n$ with $X$. The effective $0$-cycles $\Delta\cap X$ are all rationally equivalent to $h_3$, but they are clearly not Zariski-dense in $X^{[3]}$, by a dimension count. In fact, they are characterizing by collinearity.
\begin{coro} \label{corodense} Let $X\subset \mathbb{P}^n_K$ be a smooth cubic hypersurface defined over a field $K$ of characteristic $0$. Then the $K$-points of $X^{[3]}$ are Zariski-dense in $X^{[3]}$. In particular there exist non-collinear $K$-points of $X^{[3]}$. Moreover,  the $K$-points of $X^{[3]}$  which are rationally equivalent to $h_3$ are Zariski-dense in $X^{[3]}$.
\end{coro}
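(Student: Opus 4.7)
\emph{Plan.} Both claims will follow from Theorem \ref{theounirat}. The unirationality gives a dominant rational map $\phi : \mathbb{P}^N \dashrightarrow X^{[3]}$ defined over $K$, regular on a dense open $U \subset \mathbb{P}^N$. Since $K$ has characteristic zero, it is infinite, so $\mathbb{P}^N(K)$ is Zariski dense in $\mathbb{P}^N$. Therefore $\phi(U(K))$ is a Zariski dense set of $K$-points of $X^{[3]}$, which already yields the first assertion.

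For the refinement I will use a \emph{constancy of cycle class} principle: for any morphism $f : \mathbb{P}^1_K \to X^{[3]}$, the $0$-cycles on $X$ parameterized by $K$-points of $\mathbb{P}^1$ are pairwise rationally equivalent in $\mathrm{CH}_0(X_K)$, since they are the fibers of a flat family of $0$-cycles over $\mathbb{P}^1$. Given any two points $p_1, p_2 \in U(K)$, I join them by a $K$-rational line $L \cong \mathbb{P}^1_K \subset \mathbb{P}^N$. Since $X^{[3]}$ is projective, $\phi\vert_{L \cap U}$ extends to a morphism $\widetilde\phi : L \to X^{[3]}$, and the constancy principle gives $[\phi(p_1)] = [\phi(p_2)]$ in $\mathrm{CH}_0(X_K)$. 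Hence there is a single degree-$3$ rational-equivalence class $c \in \mathrm{CH}_0(X_K)$ such that every element of $\phi(U(K))$ has class $c$.

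It remains to identify $c$ with the class $h_3$ defined in (\ref{eqhtrois}). This is the step I expect to be the main obstacle, and it depends on the construction of $\phi$ used in proving Theorem \ref{theounirat}. The approach is to exhibit at least one $K$-point in $\phi(U(K))$ whose class is manifestly $h_3$: since $K$ is infinite, a generic $K$-rational line $\ell_0 \subset \mathbb{P}^n$ exists, and $z_0 := \ell_0 \cap X$ is a $K$-point of $X^{[3]}$ of class $h_3$ by the very definition of $h_3$. By the constancy principle, it is then enough to verify that the unirational parameterization from Theorem \ref{theounirat} can be arranged so that $z_0$ (for some choice of $\ell_0$) lies in the image $\phi(U(K))$. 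Concretely, natural unirational constructions for $X^{[3]}$ on a cubic hypersurface start from configurations involving lines meeting $X$ in three points, so this should be built into the parameterization; alternatively one may modify $\phi$ by post-composition with a $K$-birational self-map of $X^{[3]}$ to route the image through such a $z_0$. Once this is achieved, $c = h_3$ and $\phi(U(K))$ is the desired Zariski dense set of class-$h_3$ $K$-points of $X^{[3]}$.
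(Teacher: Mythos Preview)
Your approach is correct and mirrors the paper's implicit argument (the paper states the corollary follows ``immediately'' from Theorem~\ref{theounirat} and gives no separate proof). The density argument from unirationality over an infinite field is fine, and your constancy-of-cycle-class reduction via $K$-lines in $\mathbb{P}^N$ is a clean way to see that all of $\phi(U(K))$ lands in a single class $c\in{\rm CH}_0(X)$.

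The one imprecise point is the identification $c=h_3$. Your first suggestion, to find a collinear triple $z_0=\ell_0\cap X$ lying in $\phi(U(K))$, need not work: in the paper's construction the map is $\Psi_{-1}([W],[W'])=\mu_{-1,W}^{(3)}(\delta_{W'})$, and the whole point of Proposition~\ref{prodominant} is that $\mu_{-1,W}^{(3)}$ generically destroys collinearity, so collinear triples are not in the image at general $K$-points. Your second suggestion, post-composing with a $K$-birational self-map of $X^{[3]}$, is not a valid fix either, since such a self-map will typically change the cycle class and there is no reason it can be chosen to send some image point to a collinear triple.

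The correct identification is a one-line computation using the explicit form of $\Psi_{-1}$ (which you rightly anticipated would be needed). On each elliptic fibre $E$ of the pencil $\phi_W$, one has $\mu_{-1,W}(x)\sim -2x+H_E$ in ${\rm CH}_0(E)$, and $[H_E]=h_3$ in ${\rm CH}_0(X)$ since $H_E$ is a line section. Hence for any $K$-point $([W],[W'])$ in the domain, with $\delta_{W'}=\{x_1,x_2,x_3\}$ of class $h_3$,
\[
[\Psi_{-1}([W],[W'])]=\sum_{i=1}^3\bigl(-2[x_i]+h_3\bigr)=-2h_3+3h_3=h_3\ \ {\rm in}\ {\rm CH}_0(X).
\]
So every image $K$-point already has class $h_3$, and your constancy argument (while correct) is in fact not even needed once one makes this computation.
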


In contrast, as we will prove in  Section \ref{sec1} (see Theorem \ref{propassr}), there exist  smooth cubic surfaces $S$ over a field of characteristic $0$ such that $S^{[3]}$ is not  stably rational. There also exist smooth cubic threefolds $X$ over $\mathbb{C}$ such that $X^{[3]}$ is not  stably rational. This follows from the recent results proved in \cite{SEggay}.

Coming back to the case of $K$-points of the cubic $X$ itself, Cassels and Swinnerton-Dyer conjectured that, if a cubic hypersurface $X$ has $L$-points over a field extension $ L\supset K$ of degree coprime to $3$, then $X$ has a $K$-point. Formulated in more geometric terms, if $X$ has a $0$-cycle of degree $1$, then it has a $K$-point. In \cite{coray}, Coray studied the possible  degrees coprime to $3$ of a field extension $K\subset L$ such that $X(L)$ is not empty. He proved the following
\begin{theo}(See  \cite[Theorem 7.1]{coray}\label{theocoray}.)  Let $S$ be a  smooth cubic surface over a field of characteristic $0$ having a $0$-cycle of degree $1$. Then the minimal degree coprime to $3$ of a  closed point of $S$ is  $1$, $4$ or $10$. \end{theo}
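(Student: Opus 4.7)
The strategy is a reduction argument: starting from a closed point $P\in S$ of degree $d$ coprime to $3$ with $d\notin\{1,4,10\}$, I would produce a closed point of strictly smaller degree coprime to $3$, and iterate until the degree drops to one of the three exceptional values. The minimum over all such reductions is then forced to lie in $\{1,4,10\}$.

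The basic reduction step is a residual intersection. If there exists an irreducible curve $C\subset \mathbb{P}^3$ of degree $e$, defined over $K$, not contained in $S$, and passing scheme-theoretically through $P$, then the cycle $C\cdot S - P$ is an effective $K$-rational $0$-cycle on $S$ of degree $3e-d$. Since $3e-d$ is coprime to $3$, its support must contain at least one closed point whose degree is coprime to $3$ (otherwise the total degree $3e-d$ would be a sum of multiples of $3$). The reduction from $d$ to a value $\leq 3e-d$ is thus effective whenever $e<2d/3$.

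Most of the work lies in exhibiting such a curve $C$ for each $d\notin\{1,4,10\}$. For $d=2$, one takes $C$ to be the unique line through the two geometric conjugate points defining $P$, giving a residual point of degree $1$. For $d=5$ one uses a twisted cubic through the five geometric points---the family of twisted cubics through $5$ general points is $2$-dimensional over $\overline{K}$, so a $K$-rational member can be produced---reducing to $3\cdot 3-5=4$. For $d=7$ or $8$ one again invokes twisted cubics or rational cubic curves, possibly after an auxiliary construction on $S$. For larger $d$ the curve $C$ is sought inside linear systems $|-nK_S|$ on $S$ for suitable $n$, exploiting the abundance of rational curves on del Pezzo surfaces of degree $3$. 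The two values $d=4$ and $d=10$ are unavoidable precisely because, in general, no $K$-rational curve of sufficiently low degree passes through the corresponding Galois orbit: for instance, four points in general position in $\mathbb{P}^3$ are not coplanar, so no conic (the only admissible choice $e\leq 2$) passes through them.

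The main obstacle is ensuring the $K$-rationality of the reducing curves. Over $\overline{K}$, classical projective geometry provides positive-dimensional families of the requisite rational curves through any small enough set of points; to descend to $K$ one must either invoke a Bertini-type argument inside a positive-dimensional linear system that is itself defined over $K$, or provide a canonical, hence Galois-equivariant, construction of $C$---for example, a conic determined by a pencil of cubics whose base points include the given Galois orbit, or a rational curve arising from a tangent hyperplane section. Carrying out this case analysis, while keeping track of the genericity assumptions needed to guarantee that the residual intersection is a single closed point of degree exactly $3e-d$ rather than a cycle with excess components in a special locus, is the crux of the proof.
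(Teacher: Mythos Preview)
The paper does not contain its own proof of this statement: Theorem~\ref{theocoray} is quoted from Coray's paper and is then used as a black box in the proof of Proposition~\ref{theonouvellecoray}(a). So there is no argument in the paper to compare your proposal against. Your outline is nonetheless in the spirit of Coray's original proof, which also proceeds by finding auxiliary rational curves in $\mathbb{P}^3$ through the Galois orbit and taking residual intersections with $S$.

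That said, there is a genuine error in your sketch. For ``larger $d$'' you propose to seek the reducing curve $C$ inside the linear systems $|{-}nK_S|$ on $S$. But any member of $|{-}nK_S|$ lies \emph{on} $S$, so $C\cdot S$ is not a $0$-cycle and the residual construction $C\cdot S-P$ you describe is meaningless. The reducing curves must be curves in $\mathbb{P}^3$ not contained in $S$; Coray uses lines, conics, and twisted cubics (and more generally low-degree rational curves in $\mathbb{P}^3$), never curves on $S$ itself. You also give no explanation of why $d=10$ is an obstruction: for $d=10$ one would need a $K$-rational curve of degree $e\le 6$ through a Galois orbit of ten points, and the absence of any canonical or rational family guaranteeing such a curve is precisely the content that has to be checked. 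Finally, the sentence ``the family is positive-dimensional over $\overline{K}$, hence a $K$-rational member can be produced'' is exactly the step that requires work---one needs the parameter variety to be rational over $K$, or a Galois-equivariant construction---and you do not carry it out; you yourself flag this as ``the crux of the proof'', so what you have written is an outline rather than a proof.

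For context, the methods developed in the present paper---rank~$2$ vector bundles via the Serre construction, together with the effectivity Lemma~\ref{leeffective}---give a quite different mechanism for lowering degrees; Colliot-Th\'{e}l\`{e}ne already used the effectivity lemma to simplify Coray's argument, and the paper pushes the vector bundle method further to eliminate $10$ from Coray's list.
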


One of our main results in this paper is that we eliminate the possibility that this minimal degree  (that we will call the Coray number of $S$) is $10$.

\begin{theo} \label{theocubicsurf}  Let $S\subset \mathbb{P}^3_K$ be a smooth cubic surface defined over a field $K$ of characteristic $0$. If $S$ has a $0$-cycle of degree $1$, then $S$ either has a $K$-point or has an $L$-point, where $L$ is a field extension of $K$ of degree $4$.
\end{theo}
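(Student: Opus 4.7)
The plan is to use Coray's Theorem \ref{theocoray} to reduce the problem to eliminating the possibility that the minimal coprime-to-$3$ degree of a closed point of $S$ equals $10$. Concretely, assume $S$ has a $0$-cycle of degree $1$ and a closed point $p$ of degree $10$, but no closed point of degree in $\{1,2,4,5,7,8\}$; the goal is to produce a closed point of $S$ of degree strictly less than $10$ and coprime to $3$, which by Coray's theorem must then have degree $1$ or $4$.

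The main tool is Serre's construction of a rank $2$ vector bundle $E$ on $S$, defined over $K$, from the degree $10$ point $p$. One chooses a $K$-rational line bundle $L$ on $S$ (for example a suitable multiple of $-K_S$) such that the Cayley--Bacharach condition for $K_S - L$ with respect to $p$ holds, and obtains an extension
\[ 0 \to \mathcal{O}_S \to E \to \mathcal{I}_p \otimes L \to 0 \]
with $E$ locally free over $K$. Riemann--Roch for rank $2$ bundles on a surface, combined with vanishing statements on the del Pezzo surface $S$, should give $h^0(S,E)\geq 2$, so that there is a $K$-section $s$ of $E$ linearly independent from the tautological section $s_0$ (whose zero locus is $p$). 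The vanishing locus $Z(s)$ is then a $K$-rational length $10$ subscheme of $S$, rationally equivalent to $p$ in ${\rm CH}_0(S)$ but potentially decomposing into several smaller closed points of $S$.

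The main obstacle is the final step: one must show that a well-chosen $K$-section $s$ yields a $Z(s)$ whose Galois decomposition into closed points necessarily contains a component of degree coprime to $3$ and strictly smaller than $10$. Under the standing hypothesis that no such small point exists, every component of $Z(s)$ would have degree in $\{3,6,9,10\}$, a very restrictive condition that must be contradicted. One therefore needs a dimension count or monodromy / deformation-theoretic analysis on the projective space $\mathbb{P}(H^0(E))$ of sections (or, equivalently, on the corresponding component of $S^{[10]}$) to produce a $K$-section whose zero locus decomposes in a way incompatible with this restriction. The delicate interaction between the bundle $E$, the structure of ${\rm CH}_0(S)$ (where the degree $1$ $0$-cycle supplies the essential rational equivalences), and the $K$-rational structure of the relevant parameter spaces is where the rank $2$ vector bundle methods announced in the title enter decisively, and where the main technical difficulty of the proof should lie.
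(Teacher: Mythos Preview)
Your reduction via Coray's theorem and your instinct to use the Serre construction are both correct, and this is what the paper does. But the proposal has a concrete gap, not just a vague endgame: the bundle you build from the degree-$10$ point has only one section, so there is no second section $s$ to exploit. Indeed, for the extension $0\to\mathcal{O}_S\to E\to\mathcal{I}_p(l+1)\to 0$ to exist one needs $h^0(\mathcal{O}_S(l))<10$, forcing $l\le 1$; then $c_1(E)=\mathcal{O}_S(2)$, $c_2(E)=10$, and Riemann--Roch gives $\chi(E)=1$ with $h^2(E)=0$, so generically $h^0(E)=1$. Equivalently $h^0(\mathcal{I}_p(2))=0$ for a general length-$10$ scheme, and Lemma~\ref{lepourvb}(iii) gives $h^0(E)=1$. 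Your hoped-for dimension count on $\mathbb{P}(H^0(E))$ thus collapses before it starts.

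The paper's proof never attempts to decompose a second zero-locus of degree $10$. Instead it first \emph{prepares} the cycle so that the numerical hypotheses of Proposition~\ref{prokeyprop} are met, and then uses the bundle to \emph{subtract} $h_3$. Concretely: from the degree-$10$ class $z$ one forms $z'=z+2h_3$ of degree $16\le h^0(\mathcal{O}_S(3))-2$, so Lemma~\ref{leeffective} makes $z''=9h_3-z'$ effective of degree $11>h^0(\mathcal{O}_S(2))$. Now the bundle built from $z''$ with $l=2$ has $h^0(\mathcal{O}_S(3))-11=8\ge 2\cdot 3$, so Proposition~\ref{prokeyprop} (find a section vanishing along an imposed $h_3$ and take the \emph{residual}) shows $z''-h_3$ is effective of degree $8\le h^0(\mathcal{O}_S(2))-2$; one more application of Lemma~\ref{leeffective} produces an effective cycle of degree $4$. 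The two ideas you are missing are (i) the residual-intersection moves $10\to16\to11$ and $8\to4$ that shift the degree into the range where the bundle has enough sections, and (ii) using those sections not to get another representative of the same class but to impose vanishing along $h_3$ and extract an effective residual of strictly smaller degree.
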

\begin{rema}\label{remaprevious} {\rm   That the Coray number of a cubic hypersurface cannot be $2$ follows from the fact that, if there exists an effective $0$-cycle $z$ of degree $2$ on a cubic hypersurface $X$ over a field $K$, then $X$ has a $K$-point. When $z$ comes from a length $2$ subscheme $Z_2\subset X$ which generates a line $\langle Z_2\rangle$ not contained in $X$, this follows from the construction of the residual intersection point $y$ of the line $\langle Z_2\rangle$ with $X$. When $z$ is supported on a line  contained in $X$, this is still easier.}
\end{rema}
\begin{rema} {\rm In Theorem \ref{theocubicsurf}, an equivalent conclusion is that  $S$ either has a $K$-point or has an effective $0$-cycle of degree $4$. This follows from Remark \ref{remaprevious} and from the fact that an effective $0$-cycle is an integral combination  of classes of points defined over field extensions of $K$.}
\end{rema}

\begin{rema}{\rm  In \cite{colliot}, Colliot-Th\'{e}l\`{e}ne established an analogue of Theorem \ref{theocoray} for del Pezzo surfaces $S$ of degree $2$. Namely, he proves that, if $S$ has a $0$-cycle of degree $1$, then it has a closed point of degree $1$,  $3$ or $7$. It was proved by Koll\'ar and Mella (see \cite{kollarmella}, \cite{colliot}) that the minimal odd degree of a point (the Coray number of $S$) can be $3$. It is natural to ask if one  can (as in Theorem \ref{theocubicsurf}) eliminate the possibility that the Coray number of $S$ is $7$,  but we were not able to do this. }
\end{rema}

The method applied to prove Theorem \ref{theocubicsurf} will allow us to prove more general results concerning  the effectivity of $0$-cycles  modulo rational equivalence. Such statements were first introduced in \cite{colliot}, which proved the following result
\begin{theo} (See \cite[Th\'{e}or\`{e}me 3.3]{colliot}) If a smooth cubic surface $S$ over a field $K$ of characteristic $0$  has a $K$-point $x$,  any effective  $0$-cycle $z$ of $S$ can be written as
\begin{eqnarray}\label{eqformcyclecaspoint}   z= z'+\alpha x\,\,{\rm in}\,{\rm CH}_0(S),\end{eqnarray}
where $\alpha$ is an integer and $z'=0$ or $z'$ is the class of an effective cycle  of $S$  of degree  $1$ or  $3$. In particular, ${\rm CH}_0(S)$ is generated by classes of $K$-points and points defined over field extensions of $K$ of degree $3$.
\end{theo}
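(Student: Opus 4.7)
The central tool is the \emph{secant construction from $x$}: for any closed point $y\neq x$ of $S$ of degree $d$, the line $\ell\subset\mathbb{P}^3$ through $x$ and $y$ (defined over $\kappa(y)$) meets $S$ residually in a $0$-cycle $y^{\ast}$ that is effective of degree $d$ and defined over $K$ after Galois descent, yielding the fundamental relation
\begin{equation*}
y+y^{\ast}+d\,x \;\equiv\; d\, h_3 \quad\text{in } {\rm CH}_0(S). \qquad(\ast)
\end{equation*}
The first step is to deduce from $(\ast)$ two elementary ``collapsing'' lemmas.

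Lemma A: \emph{any closed point $y$ of degree $2$ is equivalent to $x+P$ for some $P\in S(K)$}, and \emph{$h_3\equiv 2x+P_0$ for some $P_0\in S(K)$}. Indeed, the $K$-line spanned by the two Galois conjugates of $y$ meets $S$ in $y+Q$ with $Q\in S(K)$ (this is exactly the content of Remark~\ref{remaprevious}), so $y\equiv h_3-Q$; applying $(\ast)$ to the $K$-point $Q$ gives a $K$-point $P$ with $Q+P+x\equiv h_3$, whence $y\equiv x+P$. The second assertion follows by running the same argument with the residual degree-$2$ piece of $L\cap S$ for a $K$-line $L$ through $x$. Lemma B: \emph{any two $K$-points $P_1,P_2\in S(K)$ satisfy $P_1+P_2\equiv x+R$ for some $R\in S(K)$.} The $K$-line through $P_1$ and $P_2$ meets $S$ in $P_1+P_2+T$ with $T\in S(K)$, and $(\ast)$ applied to $T$ gives $T+R+x\equiv h_3$.

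Given the lemmas, I would proceed as follows. Write $z=\sum n_iy_i$ with $y_i$ distinct closed points of degrees $d_i$. Use Lemma~A to replace every degree-$2$ summand by $x+(\text{a $K$-point})$; use the relation $h_3\equiv 2x+P_0$ to rewrite each $h_3$ produced by $(\ast)$ as $2x+(\text{a $K$-point})$; and use Lemma~B iteratively to collapse the resulting sum of $K$-points into a single $K$-point plus a further multiple of $x$. After these moves, $z$ is equivalent to $\alpha x+(\text{single $K$-point or }0)+\sum_{d_i\geq 3} n_iy_i$. If no closed point of degree $\geq 3$ survives, we take $z'$ to be the surviving $K$-point or $0$; if exactly one closed point $y$ of degree $3$ survives and no $K$-point is present, we take $z'=y$, which is an effective cycle of degree $3$. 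Combinations such as one $K$-point plus a degree-$3$ closed point, or two degree-$3$ closed points, are handled by applying $(\ast)$ to a degree-$3$ point and iterating the same lemmas.

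The \emph{main obstacle} is a closed point $y$ of degree $d\geq 4$ in the support of $z$: here $(\ast)$ produces $y^{\ast}$ of the \emph{same} degree $d$ and so does not decrease $d$. To handle this I would embed $y$ as a degree-$d$ effective divisor on a smooth cubic curve $C=H\cap S$ defined over $K$ containing $y$; when $C$ is singular, a $K$-singular point of $C$ is automatically available, and when $C$ is smooth it is an elliptic curve on which one must first secure a $K$-point (arranged from the ambient geometry of $S$ and the presence of $x\in S(K)$, for instance by intersecting $C$ with pencils of hyperplane sections through $x$). Once a $K$-point of $C$ is in hand, Riemann--Roch and the group law on $C$ allow one to rewrite $y$ as an effective $K$-divisor of degree $\leq 3$ on $C$, and the previous lemmas then finish the reduction. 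Guaranteeing a $K$-point on the auxiliary curve $C$ uniformly in $y$ is the technical heart of the argument, and is where the specific geometry of smooth cubic surfaces is truly used.
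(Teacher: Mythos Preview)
The paper does not prove this statement; it is quoted from \cite[Th\'eor\`eme~3.3]{colliot} as background for the paper's own results, so there is no proof here to compare against. Let me nonetheless point out a genuine gap in your attempt.

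Your secant relation~$(\ast)$ and Lemmas~A and~B are correct and dispose cleanly of closed points of degree $\le 3$. The problem is precisely the step you call the ``main obstacle''. For a closed point $y$ of degree $d\ge 4$ you propose to place $y$ on a hyperplane section $C=H\cap S$ and then use the elliptic-curve group law on $C$. But hyperplanes in $\mathbb{P}^3$ form only a $3$-dimensional linear system, whereas a closed point of degree $d$ imposes $d$ linear conditions; as soon as $d\ge 4$ and the geometric support of $y$ is not coplanar (the generic situation), there is \emph{no} hyperplane through $y$, hence no curve $C=H\cap S$ containing it. The reduction never gets started, and the secondary difficulty you flag (producing a $K$-point on $C$) is moot because $C$ does not exist.

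What is actually needed for $d\ge 4$ is to work with curves in $|\mathcal O_S(m)|$ for $m\ge 2$, which can contain higher-degree cycles (e.g.\ $h^0(S,\mathcal O_S(2))=10$), together with an effectivity mechanism that lets one subtract from $z$ while staying effective. This is exactly what Lemma~\ref{leeffective} (already in \cite{colliot}) and, in the paper's sharper form, Proposition~\ref{prokeyprop} with $z_s$ supported at the $K$-point $x$, provide: one iterates between $z$ and $m^2h_3-z$ (or uses the rank-$2$ bundle to peel off copies of $x$) until the degree drops to $\le 3$, after which your Lemmas~A and~B finish the job.
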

Another result proved in {\it loc$.\,$cit.} concerns the effectivity of $0$-cycles of geometrically rational surfaces.
\begin{theo} \cite[Th\'{e}or\`{e}mes 3.3,  4.4, 5.1]{colliot}\label{theocollioteffective}  Let $S$ be a smooth del Pezzo surface of degree $d_S$ over a field $K$ of characteristic $0$. If $S$  has a $K$-point, then

(a) If $d_S=3$ (cubic surfaces),  any $0$-cycle of degree $d\geq 3$ on $S$ is effective.

(b) If  $d_S=2$, then any $0$-cycle of degree $d\geq 6$ on $S$ is effective.

(c) If  $d_S=1$, then any $0$-cycle of degree $d\geq 21$ on $S$ is effective.
\end{theo}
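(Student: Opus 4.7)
The strategy is to use the structural theorem for effective $0$-cycles on a cubic surface with a $K$-point (the last theorem of the excerpt) to reduce effectivity of high-degree classes to effectivity of classes of bounded small degree, and then to settle the small-degree case by residual-intersection constructions in the ambient projective embedding.

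\textbf{Part (i), cubic surfaces.} Given $[z]\in\mathrm{CH}_0(S)$ of degree $d\geq 3$, write $[z]=[y]+(d-3)[x]$ where $[y]:=[z]-(d-3)[x]$ has degree $3$; since $(d-3)[x]$ is effective, effectivity of $[z]$ reduces to effectivity of $[y]$. The form $z=z'+\alpha x$ with $\deg z'\in\{0,1,3\}$ in the previous theorem shows this reduction to degree $3$ is essentially sharp. The core task is therefore to show that every class of degree $3$ in $\mathrm{CH}_0(S)$ is effective. For this I would decompose an arbitrary such class as $\sum_i n_i[p_i]$ with $p_i$ closed points and eliminate negative contributions using line-section relations: if $p$ is a $K$-point, the line $\overline{xp}$ gives $[p]+[x]+[r]=h_3$ with $r$ a third $K$-point; if $p$ is a degree-$2$ closed point, the $K$-line through its Galois-conjugate pair gives $[p]+[r]=h_3$ with $r$ a $K$-point; if $p$ has degree $3$ and is collinear, then $[p]=h_3$ directly. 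Closed points of larger degree are first broken up by cutting with a $K$-hyperplane through them and analysing the residual, which has strictly smaller degree. Iterating these substitutions rewrites $[y]$ as a non-negative combination of $[x]$, $h_3$ and effective low-degree cycles, which is manifestly effective.

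\textbf{Parts (ii) and (iii).} The scheme is parallel. First I would prove the analogue of the structural theorem for effective $0$-cycles on degree-$2$ and degree-$1$ del Pezzo surfaces with a $K$-point, identifying in each case a finite ``generating degree set'' of admissible values of $\deg z'$. This reduces the effectivity question to classes of degree at most $6$ (resp.\ $21$). For $d_S=2$, the anticanonical morphism $|-K_S|\colon S\to\mathbb{P}^2$ is a double cover and its fibres provide canonical effective degree-$2$ cycles, from which residual-intersection arguments (both upstairs on $S\subset\mathbb{P}^6$ and downstairs on $\mathbb{P}^2$) produce the required effective representatives. For $d_S=1$, the anticanonical pencil has a single base $K$-point, so effective degree-$1$ cycles are abundant but the richer system of low-degree cycles available on a degree-$2$ surface is absent; one must combine anticanonical and bianticanonical sections (elliptic pencil members and their pushforwards), producing the larger bound $21$.

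\textbf{Main obstacle.} The key difficulty in each case is the last step: showing that every class of the critical degree admits an effective representative. This demands a careful case analysis of closed points by their degree and geometric position (collinear, coplanar, in general position) relative to the anticanonical system, together with a bookkeeping argument tracking how negative contributions get absorbed using line, conic or hyperplane sections through them. The monotonicity of the bounds ($3<6<21$) reflects the progressively smaller supply of ``cheap'' effective cycles on lower-degree del Pezzo surfaces.
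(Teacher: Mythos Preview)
This theorem is not proved in the present paper; it is quoted from Colliot-Th\'{e}l\`{e}ne's paper \cite{colliot} as background. So there is no ``paper's own proof'' to compare against. That said, let me comment on your proposal itself.

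For part (i), your reduction to degree $3$ is sound, but the subsequent case analysis is both unnecessary and incomplete. You already have the structural theorem (the one just above in the excerpt): every \emph{effective} $0$-cycle $w$ satisfies $w=z'+\alpha x$ in $\mathrm{CH}_0(S)$ with $z'$ effective of degree $0$, $1$, or $3$. The clean deduction of effectivity---and the one the paper uses for its own analogous Corollaries~\ref{coroeff}, \ref{coroeffdp2}, \ref{coroeffdp1}---is: given any class $z$ of degree $d\geq 3$, first make $z+N x$ effective for some large $N$ (Riemann--Roch on a curve through $x$), then apply the structural theorem to $z+Nx$ to write $z+Nx=z'+\alpha x$ with $\deg z'\leq 3$; hence $z=z'+(\alpha-N)x$ with $\alpha-N=d-\deg z'\geq 0$, so $z$ is effective. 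You invoke the structural theorem only to justify that $3$ is the threshold, and then set out to reprove a special case of it by hand. Your ``iterating these substitutions'' step is where the real work would lie, and you do not explain why the process terminates in an effective cycle: each relation $[p]+[x]+[r]=h_3$ trades $-[p]$ for $[x]+[r]-h_3$, so the negative part does not obviously shrink.

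For parts (ii) and (iii), your proposal is not a proof but a wish: you say you would first establish the analogue of the structural theorem for $d_S=2$ and $d_S=1$, which is precisely the content of Colliot-Th\'{e}l\`{e}ne's Th\'{e}or\`{e}mes~4.4 and~5.1. The residual-intersection ideas you sketch are in the right spirit, but the actual arguments (in \cite{colliot} and in the present paper's improvements) are more delicate than ``combine anticanonical and bianticanonical sections''; in particular the bound $21$ in (iii) arises from a specific numerical analysis, not from a soft existence argument.
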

\begin{rema}{\rm In \cite{colliot}, the  bounds given   are   different  but these are  actually the bounds that can be deduced from  the results of {\it loc$.\,$cit.} modulo a  small extra argument (see the proof of Corollary \ref{coroeffdp2} below, and  also the addendum on   Colliot-Th\'{e}l\`{e}ne's website). }
\end{rema}
We will generalize  these  statements, mainly removing the assumption that $S$ has a $K$-point, and also by improving the numerical bounds given above, for example in (iii) where the assumption of having a $K$-point is automatic. Note that any del Pezzo surface $S$ of degree $d_S$  has a canonical effective $0$-cycle $$h_{d_S}=c_1(K_S)^2\in{\rm CH}_0(S),$$ which  generalizes the $0$-cycle $h_3$ of (\ref{eqhtrois}).  We will first prove the following two results for smooth cubic surfaces defined over a field of characteristic $0$.
\begin{theo} \label{theo2surfcub} (a) (Cf. Theorem \ref{theo2surfcubcasgen}) Let $S$ be a  smooth cubic surface over  a field $K$ of characteristic $0$. Then any  effective $0$-cycle $z$ of $S$ can be written as
 \begin{eqnarray}\label{eqformcycleintro} z= \pm z'+\gamma h_3\,\,{\rm in}\,\,{\rm CH}_0(S),\end{eqnarray}
 where  $z'$ is effective of degree $\leq 18$.

(b)  If $S$ has a $0$-cycle of degree $1$,  then $S$ has an effective $0$-cycle $x_4$ of degree $4$ (given by Theorem \ref{theocubicsurf}) and any effective $0$-cycle $z$   on $S$ can be written as
\begin{eqnarray}\label{eqlaformuleintropourcorayraff} z=\pm z'+\alpha x_4+\beta h_3\,\,{\rm in}\,\,{\rm CH}_0(S),
\end{eqnarray}
where $z'$ is effective of degree $\leq 4$.
\end{theo}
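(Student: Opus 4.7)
The plan for part (a) is to use Riemann--Roch on smooth curves in the linear system $|3H_S|$ on $S$. Since $h^0(\mathcal{O}_S(3H_S)) = 19$, for effective $z$ of degree $d \leq 18$ there is a smooth curve $C \in |3H_S|$ containing (the support of) $z$ (smoothness by a Bertini-type argument after a generic choice of the defining cubic in $\mathbb{P}^3$ modulo $z$); such $C$ is a genus-$10$ complete intersection with $H_S|_C$ of $C$-degree $9$. Setting $\gamma := \lceil (d+10)/9 \rceil$, the class $\gamma H_S|_C - z \in \mathrm{Pic}^{9\gamma - d}(C)$ has degree in $[10, 18] \geq g$, so by surjectivity of $C^{(e)} \to \mathrm{Pic}^e(C)$ for $e \geq g$ it is represented by an effective divisor $z'$ on $C$ with $\deg z' \leq 18$. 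Pushing $z + z' \sim \gamma H_S|_C$ on $C$ forward to $\mathrm{CH}_0(S)$ yields $[z] + [z'] = 3\gamma h_3$, giving the $-$ decomposition. (For $d \leq 18$ one can trivially take $z' = z$, $\gamma = 0$; the substance is for $d \geq 19$.)

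For $d \geq 19$, the linear system $|3H_S|$ need not contain a curve through $z$, and the plan is to first reduce the degree of $z$ modulo $h_3$ before applying the $|3H_S|$-step. This may proceed via Riemann--Roch on $|nH_S|$ for larger $n$: on a smooth $C \in |nH_S|$ of genus $g_n = 1 + 3n(n-1)/2$, the same construction produces an effective representative of $\pm[z] + \gamma h_3$ of degree in $[g_n, g_n + 3n - 1]$ (for $n = 4$ this is $\leq 30$, still above $18$), which is then fed back into the $|3H_S|$ argument once its degree is $\leq 18$. The main obstacle is organising this iterative reduction so that the final bound is exactly $18$ (not some weaker number like $30$); this is where I expect the paper's rank $2$ vector bundle framework, via Serre's correspondence attaching to $z$ a rank $2$ bundle $E$ with $c_2(E) = [z]$ and studying zero-loci of sections of twists $E \otimes \mathcal{O}_S(kH_S)$, to play the decisive role by providing a uniform way to realise effective representatives of $[z] + \gamma h_3$ and to verify the required Cayley--Bacharach conditions.

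For part (b), Theorem \ref{theocubicsurf} applied under the degree-$1$ hypothesis yields an effective $x_4$ of degree $4$ (either $x_4 := 4x$ for a $K$-point $x$, or a closed point with residue field of degree $4$); since $\gcd(3, 4) = 1$, the lattice $\mathbb{Z} h_3 + \mathbb{Z} x_4 \subset \mathrm{CH}_0(S)$ realises every integer as a degree. Starting from the decomposition $z = \pm z_1 + \gamma h_3$ of part (a) with $z_1$ effective of degree $d_1 \leq 18$, the further reduction to $\deg z' \leq 4$ is accomplished by adding a suitable multiple $\alpha x_4$ to $z_1$ (preserving effectivity) and applying the analogous Riemann--Roch argument on $|H_S|$ (genus $1$, $h^0 = 4$, giving residual degrees in $\{1,2,3\}$) or $|2H_S|$ (genus $4$, $h^0 = 10$, giving residual degrees in $[4, 9]$), choosing $\alpha$ so that $d_1 + 4\alpha$ falls in a residue class modulo $3$ or $6$ that forces the residual effective cycle to have degree $\leq 4$. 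The decomposition $z_1 + \alpha x_4 + z' = m H_S|_C$ on $C$ then translates to $z_1 = -z' - \alpha x_4 + \mbox{(integer)} \cdot h_3$ in $\mathrm{CH}_0(S)$, and combining with part (a) gives the claim. The main technical obstacle specific to part (b) is controlling the residue class of $d_1$ for $d_1 \in [5, 18]$ in such a way that the appropriate $\alpha$ both keeps $z_1 + \alpha x_4$ within the allowed degree range of the chosen $|nH_S|$ and lands on a residue producing $\deg z' \leq 4$; if the elementary congruence analysis via $|H_S|, |2H_S|$ alone does not suffice, this is again where the rank $2$ vector bundle machinery of the paper should step in to bridge the last gap.
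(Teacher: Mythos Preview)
Your Riemann--Roch-on-curves approach has a structural gap that prevents the iteration from ever reducing the degree. On a smooth $C \in |nH_S|$ one has $g_n = 1 + \tfrac{3n(n-1)}{2}$ and ${\rm deg}\,H_S|_C = 3n$, so the residual effective $z'$ you produce satisfies ${\rm deg}\,z' \in [g_n,\, g_n + 3n - 1] = [g_n,\, g_{n+1}-1]$. But to place $z$ on such a $C$ at all you already need $d \leq h^0(\mathcal{O}_S(n)) - 1 = g_{n+1}-1$. Hence the input range and the output range for level $n$ are the \emph{same} interval $[g_n,\,g_{n+1}-1]$; the flip $z \mapsto z'$ never drops you from level $n$ to level $n-1$. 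Concretely, starting from $d \in [19,30]$ forces $n=4$, and the residual $z'$ again has degree in $[19,30]$; you never reach the $|3H_S|$ regime $d \leq 18$. You acknowledge this when you say the bound $30$ is ``still above $18$'' and defer to the vector bundle machinery, but at that point no proof has been given.

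The paper's mechanism is different in kind, and it is exactly this difference that produces a genuine decrease in degree. From an effective $z$ of degree $d$ with $h^0(S,\mathcal{O}_S(l)) < d$, the Serre construction gives a rank $2$ bundle $E$ with $c_2(E) = z$ and $c_1(E) = (l+1)H_S$. The key fact (Proposition~\ref{prokeyprop}) is that whenever $h^0(S,\mathcal{O}_S(l+1)) - d \geq 2s$ and an effective $z_s$ of degree $s$ exists, a section of $E$ vanishing along $z_s$ has $0$-dimensional zero locus of class $z$, so $z - z_s$ is itself effective. Applied with $z_s = h_3$ (so $s=3$, using Corollary~\ref{corodense}), this \emph{subtracts} $h_3$ while preserving effectivity --- a net drop of $3$ in the degree, something the curve flip can never do. The induction (Proposition~\ref{prodescent}) interleaves this subtraction with the flip $z \mapsto (l+1)^2 h_3 - z$ of Lemma~\ref{leeffective} (which is essentially your curve argument) to handle the boundary cases $d = h^0(\mathcal{O}_S(l+1))$ or $d = h^0(\mathcal{O}_S(l+1))-1$, and a separate computation disposes of $d=19$. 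For part~(b) the paper again uses Proposition~\ref{prokeyprop} (now with $l=2$ and $s=3$) together with the existence of $x_4$ (itself extracted via the bundle method applied to a degree-$10$ cycle) to drive the degree down to $\leq 4$; the congruence bookkeeping on $|H_S|$ and $|2H_S|$ that you sketch is not used, and would face the same no-net-decrease obstruction.
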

\begin{rema} \label{remapoursigne}{\rm  As we will see in the course of the proof of Corollary \ref{coroeff}, we can in fact impose the sign in front of $z'$ in formulas (\ref{eqformcycleintro}) and  (\ref{eqlaformuleintropourcorayraff}), that is, the four formulas  are true,  with a positive sign or negative sign.} \end{rema}

Theorem \ref{theo2surfcub} implies the following corollary that will be proved in Section \ref{secvbeff}.
\begin{coro} \label{coroeff} (a) Let $S$ be a smooth cubic surface over a field $K$ of characteristic $0$. Then any $0$-cycle $z\in {\rm CH}_0(S)$ of degree $d\geq 18$ is effective.

(b) Let $S$ be a smooth cubic surface over a field $K$ of characteristic $0$. If $S$ has a $0$-cycle of degree $1$, any
$0$-cycle $z\in {\rm CH}_0(S)$ of degree $d\geq 8$ is effective.
\end{coro}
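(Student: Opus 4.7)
The plan is to deduce Corollary \ref{coroeff} from Theorem \ref{theo2surfcub} by a ``double substitution'' trick that converts an arbitrary class $z$ into an effective cycle of bounded degree plus a non-negative combination of the generators $h_3$ (and $x_4$, for part (b)). For part (a), I would write $z \in {\rm CH}_0(S)$ of degree $d \geq 18$ as a difference $e_1 - e_2$ of effective $0$-cycles. Applying Theorem \ref{theo2surfcub}(a) to $e_2$ in its minus-sign form gives $e_2 + e'_2 = \gamma_2 h_3$ with $e'_2$ effective of degree at most $18$ and $\gamma_2 \geq 0$ (the latter forced by a degree count). Substituting yields $z = E - \gamma_2 h_3$ where $E := e_1 + e'_2$ is effective. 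A second application of Theorem \ref{theo2surfcub}(a), this time with the plus sign applied to $E$, writes $E = F + \delta h_3$ with $F$ effective of degree at most $18$, so
\begin{equation*}
z = F + (\delta - \gamma_2)\, h_3.
\end{equation*}
Then $3(\delta - \gamma_2) = d - \deg F \geq 18 - 18 = 0$, so the $h_3$-coefficient is non-negative and $z$ is effective.

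For part (b), I would run the same strategy with the three-term decomposition of Theorem \ref{theo2surfcub}(b). The same double substitution produces
\begin{equation*}
z = F + A\, x_4 + B\, h_3,
\end{equation*}
with $F$ effective of degree at most $4$ and $4A + 3B = d - \deg F \geq 8 - 4 = 4$. The main obstacle, and the genuine new difficulty compared with (a), is that the inequality $4A + 3B \geq 4$ does not force $A, B \geq 0$: for instance $(A,B) = (-2, 4)$ satisfies the degree constraint and the effectivity of $-2 x_4 + 4 h_3$ is not immediate. To handle this I would split off the case in which $S$ has a $K$-point, where Colliot-Th\'{e}l\`{e}ne's Theorem \ref{theocollioteffective}(i) already makes every class of degree $\geq 3$ effective; in the complementary case Theorem \ref{theocubicsurf} supplies the degree-$4$ cycle $x_4$, and one exploits the Frobenius-style arithmetic of the numerical semigroup generated by $3$ and $4$ (whose only gap above $3$ is $5$), together with the freedom in choosing the decomposition modulo the degree-zero shift $3 x_4 - 4 h_3$, to produce a non-negative representative of $A x_4 + B h_3$.

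As flagged in Remark \ref{remapoursigne}, the sign flexibility of Theorem \ref{theo2surfcub} used above is itself established in the course of the proof of Corollary \ref{coroeff}. I would therefore organise the argument iteratively: first reduce the opposite-sign statement of Theorem \ref{theo2surfcub} to the effectivity of particular classes of the form $\gamma h_3 - e$ (respectively, $\alpha x_4 + \beta h_3 - e$) of moderate degree, using the single-sign form of Theorem \ref{theo2surfcub} as input, and then bootstrap via the double substitution to obtain the sign freedom and the effectivity conclusion of the corollary in parallel. The hardest step I anticipate is the arithmetic bookkeeping in part (b), where ruling out the single gap of $5$ in $\{0, 3, 4, 6, 7, 8, \ldots\}$ is what makes the numerical bound $d \geq 8$ sharp and requires the most delicate case analysis.
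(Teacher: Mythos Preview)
Your outline for part~(a) is close to the paper's argument: both reduce to a decomposition $z=z'+\gamma h_3$ with $z'$ effective of degree $\leq 18$ and then read off $\gamma\geq 0$ from the degree hypothesis. The paper, however, does not organise the sign flexibility ``iteratively'' as you propose; it proves it directly by showing (via Lemmas~\ref{leeffective} and~\ref{levariant}) that any effective $z$ of degree $\leq 18$ satisfies $z=\lambda h_3-z'$ with $z'$ effective of the same degree bound. Your bootstrap plan is plausible but remains a sketch, whereas the paper's route is a short explicit computation.

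Part~(b) has a genuine gap. Your ``freedom in choosing the decomposition modulo the degree-zero shift $3x_4-4h_3$'' does not exist: the class $3x_4-4h_3\in{\rm CH}_0(S)$ is in general nonzero, so replacing $(A,B)$ by $(A+3k,B-4k)$ changes the class $Ax_4+Bh_3$, not just its numerical coefficients. The Frobenius semigroup argument therefore cannot be applied to $Ax_4+Bh_3$ as a cycle class. The paper's key idea, which you are missing, is geometric rather than arithmetic: choose representatives of $h_3$, $x_4$ and $z'$ supported on a common smooth curve $C\in|\mathcal{O}_S(2)|$ (possible since $h^0(S,\mathcal{O}_S(2))=10$). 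Adjunction gives $g(C)=4$, and Riemann--Roch on $C$ then shows that any divisor class of degree $\geq 4$ on $C$ is effective. This simultaneously handles the sign conversion (via $z''=4h_3-x_4-z'$) and the effectivity of $\gamma h_3+\delta x_4$ whenever its degree is $\geq 4$, without any appeal to Theorem~\ref{theocollioteffective} or to a case split on the existence of a $K$-point.
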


In the case of del Pezzo surfaces of degree $2$, our results are as follows.

\begin{theo} \label{theo2surfdelPsanspoint} (a) (Cf. Theorem \ref{theogrosseborne}.)  Let   $S$ be a  smooth degree-$2$ del Pezzo surface over  a field $K$ of characteristic $0$. Then  any  effective $0$-cycle $z\in{\rm CH}_0(S)$ can be written as
 \begin{eqnarray}\label{eqformcycledp2} z= z'+\gamma h_2,
 \end{eqnarray}
 where $z'$ is effective of degree $\leq 13$.

 (b) More precisely,  any effective $0$-cycle $z\in {\rm CH}_0(S)$ can be written as
 $$z=z'+\gamma h_2\,\,{\rm in}\,\,{\rm CH}_0(S),$$
 where $z'$ is effective of degree $13, \,12$, or $\leq 7$.
 \end{theo}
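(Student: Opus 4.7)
The plan is to combine the rank 2 vector bundle method suggested by the paper's title with the double cover structure $\pi : S \to \mathbb{P}^2$ coming from $|-K_S|$. One has $-K_S = \pi^*\mathcal{O}_{\mathbb{P}^2}(1)$ and $h_2 = (-K_S)^2 \in \mathrm{CH}_0(S)$, so any intersection product of the form $aK_S \cdot bK_S$ in $\mathrm{CH}_0(S)$ is a multiple of $h_2$. Consequently, if we modify a rank 2 vector bundle by twisting with a power of $K_S$, its second Chern class moves only by a multiple of $h_2$, which is exactly the freedom we need to produce a relation of the form $[z] = [z'] + \gamma h_2$ in $\mathrm{CH}_0(S)$.

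Given an effective $K$-rational $0$-cycle $z$ of degree $d$ on $S$, I would proceed in three steps. First, after moving $z$ within its rational equivalence class if necessary to meet a Cayley--Bacharach condition, I would apply the Hartshorne--Serre correspondence to construct a rank 2 vector bundle $E$ defined over $K$, with $\det E = -pK_S$ for a suitable integer $p$, together with a section $s \in H^0(S, E)$ whose zero locus is $z$; in particular $c_2(E) = [z]$. Second, using Riemann--Roch and Kodaira-type vanishing on the del Pezzo $S$, I would show that whenever $d \leq h^0\bigl(-(p-1)K_S\bigr) = p^2-p+1$, the twist $E \otimes K_S$ admits a nonzero $K$-rational section, whose zero scheme is an effective $K$-rational $0$-cycle $z'$ of degree $d - 2(p-1)$ satisfying $[z] = [z'] + (p-1)\,h_2$ in $\mathrm{CH}_0(S)$. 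Third, I would iterate this reduction, choosing at each step the minimal $p \geq 2$ with $p^2 - p + 1 \geq d$; the degree strictly decreases and an elementary induction shows the process terminates with $\deg z' \leq 13$, proving (a). Note that the threshold $13$ arises naturally as $h^0(-3K_S)$, i.e.\ the value of $p^2-p+1$ at $p=4$.

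For part (b), the plan is to refine the residual analysis once $d \leq 13$: the reduction associated to $p = 4$ drops the degree by $6$, so any $d \in \{8,9,10,11,12,13\}$ should reach the range $\leq 7$, unless the Hartshorne--Serre construction or the required vanishing is obstructed, which can happen only in the borderline situations $d = 12$ and $d = 13$. I would then argue that these are the only possible stopping degrees above $7$. The hardest part will be the descent from $\bar K$ to $K$: the extension class defining $E$, the bundle $E$ itself, and the new section $s'$ must all be chosen $K$-rationally, and the generic $K$-section must have a reduced zero scheme. This requires a Galois-equivariant formulation of the Hartshorne--Serre construction and Bertini-type arguments valid over the possibly infinite field $K$, both of which make essential use of the characteristic zero hypothesis.
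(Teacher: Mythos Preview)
Your twisting step contains a genuine incompatibility. With $\det E=\mathcal{O}_S(p)$, the Serre/Hartshorne--Serre extension
\[
0\to\mathcal{O}_S\to E\to\mathcal{I}_Z(p)\to 0
\]
is nontrivial (and, for a $Z$ in general position, $E$ is locally free) only when the restriction $H^0(\mathcal{O}_S(p-1))\to H^0(\mathcal{O}_Z)$ fails to surject, i.e.\ when $d>h^0(\mathcal{O}_S(p-1))=p^2-p+1$. On the other hand, twisting the sequence by $K_S=\mathcal{O}_S(-1)$ gives $h^0(E(-1))=h^0(\mathcal{I}_Z(p-1))$, which for $Z$ in general position equals $\max\bigl(0,\,h^0(\mathcal{O}_S(p-1))-d\bigr)$; so $E\otimes K_S$ has a section only when $d<p^2-p+1$. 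These two ranges are disjoint, and Riemann--Roch confirms it: $\chi(E(-1))=p^2-p+2-d\le 0$ exactly in the regime where the construction works. Moving $Z$ to a special position so that Cayley--Bacharach holds with $d\le p^2-p+1$ does not save the argument: the sections of $\mathcal{I}_Z(p-1)$ you then obtain are equations of curves through $Z$, and the corresponding sections of $E(-1)$ vanish along those curves rather than in codimension $2$, so they do not produce an effective $0$-cycle of class $[z]-(p-1)h_2$. In particular your ``$p=4$'' reduction for $d\le 13$ cannot get started, since the extension class is zero for generic $Z$ of degree $\le 13$.

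The paper does use the rank $2$ bundle $E$, but it never twists. Instead, having built $E$ with $d>h^0(\mathcal{O}_S(l))$ (so $l=p-1$), it looks for a section of $E$ itself that vanishes along an auxiliary length-$2$ subscheme representing $h_2$; this needs $h^0(\mathcal{O}_S(l+1))-d\ge 4$, and the residual zero locus gives an effective cycle of class $z-h_2$ (Proposition~\ref{prokeypropdp2}). That inequality does not cover the boundary cases $d=h^0(\mathcal{O}_S(l+1))$ and $d=h^0(\mathcal{O}_S(l+1))-1$; those are handled with a second, independent tool you did not mention, namely the ``complete intersection residual'' Lemma~\ref{leeffective}, which shows $(l+1)^2h_2-z$ is effective whenever $d\le h^0(\mathcal{O}_S(l+1))-2$. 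A third ingredient you also omit is the anticanonical involution $\iota$ of the double cover, used at the end to turn the a priori $\pm z'$ into $+z'$ via $z+\iota(z)=(\deg z)h_2$. For part~(b) the paper treats $8\le d\le 13$ directly with these same two tools (not by a single $p=4$ step), and the residual degrees $12,13$ survive precisely because they are the boundary values $h^0(\mathcal{O}_S(3))-1$ and $h^0(\mathcal{O}_S(3))$ at $l=2$.
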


 \begin{coro} \label{coroeffdp2} (a) On a smooth degree $2$ del Pezzo surface over a  field $K$ of characteristic $0$, any $0$-cycle $z\in {\rm CH}_0(S)$ of degree $d\geq 13$ is effective.

 (b) If $S$ has no  $0$-cycle of odd degree, any $0$-cycle $z\in {\rm CH}_0(S)$ of degree $d\geq 12$ is effective.
 \end{coro}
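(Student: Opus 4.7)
The plan is to derive both parts from Theorem \ref{theo2surfdelPsanspoint} via a minimality argument that measures how far a given class is from being effective, in units of $h_2$. For any $z\in{\rm CH}_0(S)$ set
$$M(z)=\min\{M\ge 0\mid z+Mh_2\text{ is represented by an effective }0\text{-cycle}\}.$$
The preliminary step is to verify that this minimum is finite. Writing $z=z_1-z_2$ with $z_1,z_2$ effective, it suffices to show that for every closed point $P$ of $S$ there exists $N_P\ge 0$ with $N_Ph_2-[P]$ effective. I would obtain this by taking two general divisors $D_1,D_2\in|k(-K_S)|$ for $k$ sufficiently large, each passing through $P$ and meeting transversally at $P$: then the $0$-cycle $D_1\cdot D_2$ has class $k^2(-K_S)^2=k^2h_2$ and contains $[P]$ with multiplicity one, so the residual represents $k^2h_2-[P]$ and is effective.

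The core argument then runs as follows. Let $z$ have degree $d\ge 13$ in case (a), or $d\ge 12$ in case (b) under the parity hypothesis, and suppose for contradiction that $M:=M(z)\ge 1$. Since $z+Mh_2$ is effective, Theorem \ref{theo2surfdelPsanspoint} yields
$$z+Mh_2=w+\gamma h_2\text{ in }{\rm CH}_0(S),$$
where $w$ is effective with $\deg w\le 13$ in case (a), and $\deg w\in\{0,\ldots,7,12,13\}$ in case (b) (the parity hypothesis of (b) then forces $\deg w\le 12$). Comparing degrees gives $\gamma=(d+2M-\deg w)/2\ge 1$ in both cases. If $\gamma\ge M$ then $z=w+(\gamma-M)h_2$ is a sum of effective classes, so $M(z)=0$, contradicting $M\ge 1$. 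If $\gamma<M$ then $z+(M-\gamma)h_2=w$ is effective with $M-\gamma<M$, contradicting the minimality of $M$. Either way we reach a contradiction, so $M(z)=0$ and $z$ is effective.

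The main obstacle is really just the existence of $M(z)$; once this is in hand, the minimality argument is mechanical and the thresholds $13$ and $12$ drop out of Theorem \ref{theo2surfdelPsanspoint} with no slack. In part (b), the ``no $0$-cycle of odd degree'' hypothesis enters only to discard the odd values in the list $\{0,\ldots,7,12,13\}$, improving the effective bound on $\deg w$ from $13$ to $12$ and hence the threshold by one unit.
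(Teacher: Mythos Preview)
Your argument is correct and follows the same strategy as the paper: show that $z+Mh_2$ is effective for some $M\ge 0$, apply Theorem~\ref{theo2surfdelPsanspoint} to rewrite it as $w+\gamma h_2$ with $\deg w\le 13$ (resp.\ $\le 12$ under the parity hypothesis), and then read off from degrees that the coefficient of $h_2$ in $z=w+(\gamma-M)h_2$ is nonnegative. Your minimality framework and the dichotomy on $\gamma$ versus $M$ are in fact superfluous, since $\gamma-M=(d-\deg w)/2\ge 0$ always holds under the stated degree bounds; the paper proceeds directly, obtaining the preliminary effectivity of $z+\gamma h_2$ via Riemann--Roch on a smooth curve in $|\mathcal{O}_S(m)|$ supporting $z$ rather than by your intersection-of-divisors construction.
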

 \begin{proof} (a) Let $S$ be a del Pezzo surface of degree $2$ over a field $K$ of characteristic $0$, and let $z\in {\rm CH}_0(S)$. As $z$ is supported on a smooth curve $C\subset S$ in a linear system $|\mathcal{O}_S(m)|$ for $m$ large, we can  write by applying Riemann--Roch on $C$
  $$z=z'  -\gamma h_2\,\,{\rm in}\,\,{\rm CH}_0(S),$$ for some large integer $\gamma$, with $z'$  effective. We now apply (\ref{eqpourfindp2suppl}) to $z'$, which gives
\begin{eqnarray}\label{eqpourzdedermin} z=  z''+\gamma' h_2\,\,{\rm in}\,\,{\rm CH}_0(S),\end{eqnarray}
where $z''$ is effective of degree $\leq 13$.
If ${\rm deg}\,z\geq 13$, then $\gamma'\geq 0$ so $z$ is effective and the corollary is proved in this case.

(b) If $S$ has no   $0$-cycle of odd degree, then in the above argument, we know that ${\rm deg}\,z''\leq 12$.
It follows that, if  ${\rm deg}\,z\geq 12$,  $\gamma'\geq 0$ and $z$ is effective.
\end{proof}

Our last results concern the case of a del Pezzo surfaces of degree $d_S=1$, which will be treated by the same method in Section \ref{secdelpezzodeg1}.

\begin{theo} \label{theo2surfdelPezzodeg1}  (a)  Let   $S$ be a  smooth degree-$1$ del Pezzo surface over  a field $K$ of characteristic $0$. Then  any  effective $0$-cycle $z\in{\rm CH}_0(S)$ can be written as
 \begin{eqnarray}\label{eqformcycledp1} z=  z'+\gamma h_1,
 \end{eqnarray}
 where $z'$ is effective of degree $\leq 15$.

 (b) Under the same assumptions, any  effective $0$-cycle $z\in{\rm CH}_0(S)$ can be written as
 $$z= \pm z'+\gamma h_1,$$
 where $z'$ is effective of degree $15,\,10,\,7,\,6$ or $\leq 4$.
 \end{theo}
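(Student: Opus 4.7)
The plan is to follow the strategy used for the cubic surface case (Theorem~\ref{theo2surfcub}) and the degree 2 del Pezzo case (Theorem~\ref{theo2surfdelPsanspoint}), adapted to the degree 1 setting. A key observation is that a smooth degree 1 del Pezzo $S$ always possesses a canonical $K$-rational point, namely the unique base point $p_0$ of the anticanonical pencil $|-K_S|$; in particular $h_1 = [p_0] = (-K_S)^2 \in \mathrm{CH}_0(S)$.

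For part~(a), let $d = \deg z$. If $d \leq 15$ we take $z'=z$ and $\gamma=0$. For $d \geq 16$, the central tool is Riemann--Roch on a smooth curve $C \in |-nK_S|$ containing $z$. For such a $C$ of genus $g_n = 1 + n(n-1)/2$, the line bundle $\mathcal{O}_C(z - \gamma p_0)$ has degree $d-\gamma$ and, whenever $d - \gamma \geq g_n$, admits a nonzero global section; taking $\gamma$ maximal with this property produces $z = z' + \gamma h_1$ in $\mathrm{CH}_0(S)$ with $z'$ effective of degree in $[g_n, g_n + n - 1]$. For $n = 5$ we have $g_5 = 11$, giving exactly the bound $\deg z' \leq 15$.

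The principal obstacle is that for $d \geq 16$ the linear system $|-5K_S|$ (of dimension $15$) does not directly contain a curve through $z$. This is where the paper's machinery of rank 2 vector bundles (Serre's construction) is essential: one builds a rank 2 bundle $E$ on $S$ with $c_2(E) = [z]$ and $c_1(E)$ a suitable multiple of $-K_S$, satisfying the Cayley--Bacharach property, and then uses sections of twists $E(-mK_S)$ whose existence is controlled by Riemann--Roch on $S$. Their degeneracy loci yield auxiliary effective cycles $z^\ast$ such that $z + z^\ast$ lies on a smooth curve in $|-5K_S|$, while $[z^\ast]$ is a multiple of $h_1$ up to an effective cycle of controlled degree. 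Combining with the Riemann--Roch step on the curve gives the desired decomposition.

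For part~(b), the three possibilities for $\deg z'$ arise from choosing $C$ in $|-5K_S|$, $|-3K_S|$ or $|-2K_S|$, corresponding to genera $g_5 = 11$, $g_3 = 4$, $g_2 = 2$, and producing the bounds $15$, $7$, $\leq 4$ respectively (the values $7$ and $4$ matching $h^0(-3K_S)=7$ and $h^0(-2K_S)=4$). The $\pm$ sign reflects the two symmetric Riemann--Roch arguments: either we subtract an effective divisor $D \in |-mK_S|_C$ from $z$ and obtain $z = z' + \gamma h_1$ with $z'$ effective, or we exhibit $z$ as contained in some $D \in |-mK_S|_C$ and write $z = -z' + \gamma h_1$ with $z'$ the residual. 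The extra technicality specific to~(b) is Galois-equivariance: all auxiliary data (curves, rank 2 bundles, sections) must be chosen $K$-rationally, which is achieved by imposing Galois-invariance conditions throughout the Serre construction.
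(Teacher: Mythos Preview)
Your proposal diverges substantially from the paper's proof and the central vector-bundle step, as you describe it, does not actually do what you need.

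The paper argues in the opposite order: it establishes (b) first by an induction on $d=\deg z$, and then deduces (a) from (b) via a short sign-fixing lemma (Lemma~\ref{lefindefin}). For the induction, given $d\geq 5$ one picks $l$ with $h^0(\mathcal{O}_S(l))<d\leq h^0(\mathcal{O}_S(l+1))$. If $d\leq h^0(\mathcal{O}_S(l+1))-2$, Lemma~\ref{leeffective} makes $(l+1)^2h_1-z$ effective, so one may assume $d\leq\frac{1}{2}(l+1)^2$; then Proposition~\ref{prokeypropdp1} applies with $s=1$ and shows \emph{directly} that $z-h_1$ is effective. In the two boundary cases $d=h^0(\mathcal{O}_S(l+1))$ or $h^0(\mathcal{O}_S(l+1))-1$ one first adds $h_1$ or $2h_1$ and reapplies the proposition with a larger $s$. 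The residual degrees $15$ and $7$ are precisely the values of $d$ at which this two-pronged reduction breaks down for small $l$; the range $\leq 4$ is the base of the induction. No curve $C\in|{-}nK_S|$ is ever chosen, and Riemann--Roch on a curve plays no role.

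Your use of the rank~$2$ bundle $E$ is the gap. In the paper, $E$ is built from $z$ via the extension (\ref{eqextvb}) and one takes a section of $E$ itself vanishing at $s$ copies of the base point; its (zero-dimensional) zero locus has class $c_2(E)=z$ and contains $sh_1$, so the residual exhibits $z-sh_1$ as effective. That is the whole mechanism. Your claim that ``degeneracy loci of sections of $E(-mK_S)$ yield effective $z^*$ with $z+z^*$ on a smooth curve in $|{-}5K_S|$'' does not follow from Serre's construction: a nonzero section of $E(-m)$ corresponds to a sub-line-bundle $\mathcal{O}_S(m)\hookrightarrow E$, equivalently to a curve in $|\mathcal{O}_S(l+1-m)|$ through $z$, and gives no way to produce a $z^*$ whose class is a controlled multiple of $h_1$ placing $z+z^*$ on a fixed degree~$5$ curve. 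Likewise, your explanation of the list $15,\,7,\,\leq 4$ as coming from curves in $|{-}5K_S|,\,|{-}3K_S|,\,|{-}2K_S|$ does not match the actual source of these numbers, which is the arithmetic of $h^0(\mathcal{O}_S(l+1))$ at the failure points of the induction.
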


The following corollary is then proved in the same way as Corollary \ref{coroeffdp2}.

 \begin{coro} \label{coroeffdp1}  On a smooth del Pezzo surface of degree $1$ defined over a field $K$ of characteristic $0$, any $0$-cycle $z\in {\rm CH}_0(S)$ of degree $d\geq  15$ is effective.
 \end{coro}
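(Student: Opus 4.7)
The plan is to follow verbatim the template of the proof of Corollary \ref{coroeffdp2}, with $h_1$ substituted for $h_2$ and Theorem \ref{theo2surfdelPezzodeg1}(a) in place of Theorem \ref{theo2surfdelPsanspoint}(a). The key ingredient that makes the book-keeping go through is that on a smooth degree $1$ del Pezzo surface $S$, the anticanonical linear system $|-K_S|$ has a unique base point, which is automatically defined over $K$; it represents the canonical cycle $h_1 \in {\rm CH}_0(S)$, so $h_1$ is effective and of degree $1$.

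Given an arbitrary $z \in {\rm CH}_0(S)$ of degree $d \geq 15$, I would first choose $m$ large so that $|\mathcal{O}_S(m)|$ contains a smooth $K$-curve $C\subset S$ whose support contains a representative of $z$ (using Bertini over the infinite field $K$). Applying Riemann--Roch on $C$ to the line bundle $\mathcal{O}_C(z + \gamma h_1)$ for $\gamma \gg 0$ produces an effective representative, giving
$$z = z_0 - \gamma h_1 \quad \text{in } {\rm CH}_0(S)$$
with $z_0$ effective and $\gamma$ a (possibly large) nonnegative integer. Applying Theorem \ref{theo2surfdelPezzodeg1}(a) to the effective cycle $z_0$ then yields $z_0 = z'' + \gamma_0 h_1$ in ${\rm CH}_0(S)$ with $z''$ effective of degree at most $15$. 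Combining the two identities,
$$z = z'' + (\gamma_0 - \gamma)\, h_1 = z'' + \gamma'\, h_1 \quad \text{in } {\rm CH}_0(S).$$
Taking degrees and using $\deg(h_1)=1$ gives $\gamma' = \deg(z) - \deg(z'') \geq 15 - 15 = 0$, so the right-hand side is a sum of effective cycles and $z$ is itself effective.

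The argument is essentially formal once Theorem \ref{theo2surfdelPezzodeg1}(a) is granted, so there is no genuine obstacle at the level of the corollary; the real work sits in the theorem it invokes. The only feature specific to the degree $1$ case is the $K$-rationality of the anticanonical base point, which both makes $h_1$ an effective degree $1$ cycle and forces the threshold on $\deg(z)$ to coincide with the numerical bound $15$ coming from Theorem \ref{theo2surfdelPezzodeg1}(a). A potential improvement via part (b) of that theorem (where $\deg(z') \in \{15,7\}$ or $\leq 4$) is not immediately available as a single threshold, since the case $\deg(z'')=15$ has still to be accommodated.
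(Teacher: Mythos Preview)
Your proof is correct and follows essentially the same approach as the paper, which simply states that the corollary ``is then proved in the same way as Corollary \ref{coroeffdp2}.'' The one small imprecision is the phrase ``applying Riemann--Roch on $C$ to $\mathcal{O}_C(z+\gamma h_1)$'': for this to make literal sense you need the anticanonical base point to lie on $C$, which you do not explicitly require; either add this to the choice of $C$, or (equivalently, and closer to the paper's wording) twist instead by $\mathcal{O}_S(N)|_C$ and use that $c_1(\mathcal{O}_S(1))^2=h_1$ in ${\rm CH}_0(S)$.
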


 Our main tool for the proof of Theorems  \ref{theocubicsurf}, and \ref{theo2surfcub}, \ref{theo2surfdelPsanspoint} is the classical  Schwarzenberger  construction of vector bundles of rank $2$ on a smooth del Pezzo surface $S$ of degree $3$, resp. $2$, resp. $1$, starting from smooth (or {\it lci}) length-$2$ subschemes $Z$ of $S$. Sections of these vector bundles then allow us to move the cycle and a section vanishing along a cycle $h_3$, resp. $h_2$, resp. $h_1$ allows us to prove effectivity results for $Z-h_3$, resp. $Z-h_2$, resp. $Z-h_1$. This strategy is described in Section \ref{secvb} where the key Proposition \ref{prokeyprop} is proved.

 \vspace{0.5cm}

 {\bf Thanks.} {\it I thank Jean-Louis Colliot-Th\'{e}l\`{e}ne and Victor de Vries for their careful reading, questions and comments. I am also very grateful to the referee for their numerous and constructive suggestions.}

\section{Rational self-maps and the proof of Theorem \ref{theounirat}\label{sec1}}
 We will use the following  rather standard construction (see for example \cite{bogothschi}) that allows to construct rational self-maps of a cubic hypersurface and its higher symmetric powers. Let $E$ be an elliptic curve over a field $K$ and $H\in{\rm Pic}(E)$ be a line bundle of degree $d$. Then for any integer $m=sd+1$, there is a morphism
$$\mu_s:E\rightarrow E,\,x\mapsto mx-sH\in{\rm Pic}^1(E)=E.$$
Let $X\subset \mathbb{P}^n$ be a smooth cubic hypersurface over a field $K$ of characteristic $0$. For a general element $[W]\in G:={\rm Grass}(n-1, H^0(\mathbb{P}^n,\mathcal{O}_{\mathbb{P}^n}(1)))$, there is a rational map
\begin{eqnarray}\label{eqphiw}\phi_W: X\dashrightarrow \mathbb{P}^{n-2}\end{eqnarray}
given by the linear projection from the line $\Delta_W\subset \mathbb{P}^{n}$ defined by $W$. The generic fiber of $\phi_W$ is an elliptic curve over the function field of $\mathbb{P}^{n-2}$ and it carries a line bundle $H$ of degree $d=3$. The construction above thus gives for each $s$  a rational self-map
$$\mu_{s,W}: X\dashrightarrow X$$
of multiplication by $3s+1$ over $\mathbb{P}^{n-2}$. This map induces in turn for each $k$ a rational self-map
$$\mu_{s,W}^{k}: X^{(k)}\dashrightarrow X^{(k)}.$$
Using the  maps $\mu_{s,W}^{k}$, we now  construct for each $s$ a rational map defined over $K$

\begin{eqnarray}\label{eqPsi}\Psi_s: G\times G\dashrightarrow X^{[3]}.\end{eqnarray}
The construction goes as follows.
 For a generic  $[W']\in G={\rm Grass}(n-1, H^0(\mathbb{P}^n,\mathcal{O}_{\mathbb{P}^n}(1)))$, the line $\Delta_{W'}$ produces by intersection with $X$ a subscheme of length $3$ of $X$, hence a point  $\delta_{W'}$ of $X^{[3]}$.
 The rational map $\Psi_s$  is defined by
 \begin{eqnarray} \label{eqpsiexpli}
\Psi_s([W],[W'])=\mu_{s,W}^{3}(\delta_{W'}).\end{eqnarray}

As the variety $G\times G$ is rational over $K$, Theorem \ref{theounirat} will now be obtained as a consequence of the following  result.
\begin{prop}\label{prodominant}  Assume $n\geq 3$ and ${\rm char}\,K=0$. Then, for  $s=-1$, the rational map
$$\Psi_s: G\times G\dashrightarrow X^{[3]}$$
is dominant.
\end{prop}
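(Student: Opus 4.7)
The plan is to show $\Psi_{-1}$ is dominant by computing its differential at a generic $(W_0,W_0')\in G\times G$ and checking that it attains maximal rank $3(n-1)=\dim X^{[3]}$. Since $\dim(G\times G)=4(n-1)$, this also exhibits the expected fiber dimension $n-1$. First I would describe $\mu_{-1,W}$ explicitly: using $\mu_{-1}(x)=-2x+H$ and the classical realization of doubling on a plane cubic by tangent lines, $\mu_{-1,W}(a)$ is the third (residual) intersection with $X$ of the line $\ell_{a,W}\subset\mathbb{P}^n$ joining $a$ to the auxiliary point $q_{a,W}:=T_aX\cap \Delta_W$. Because $\ell_{a,W}\subset T_aX$, the image automatically lies on the codimension-one subvariety $D_a:=T_aX\cap X$, a singular cubic in $T_aX\cong\mathbb{P}^{n-1}$ with a double point at $a$.

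Fix a generic $(W_0,W_0')$, set $\{a_1,a_2,a_3\}=\Delta_{W_0'}\cap X$ and $p_i=\mu_{-1,W_0}(a_i)$, and decompose
\[
d\Psi_{-1}(\dot W,\dot W')=\bigl(D_i(\dot a_i)+R_i(\dot W)\bigr)_{i=1}^{3}\in \bigoplus_{i=1}^{3} T_{p_i}X,
\]
where $D_i=d\mu_{-1,W_0}|_{a_i}\colon T_{a_i}X\to T_{p_i}X$ is an isomorphism (since $\mu_{-1,W_0}$ is generically finite of degree $4$ on elliptic fibers), $R_i(\dot W)\in T_{p_i}D_{a_i}$ captures the variation of $p_i$ along $\Delta_W$ with $a_i$ held fixed, and $(\dot a_i)$ is the tangent vector on the variety of collinear triples induced by $\dot W'\in T_{W_0'}G$. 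The assignment $\dot W'\mapsto(\dot a_i)$ is an isomorphism onto the subspace $V\subset\bigoplus_i T_{a_i}X$ of codimension $n-1$. A kernel vector is forced to satisfy $\dot a_i=-D_i^{-1}(R_i(\dot W))$, so $\ker d\Psi_{-1}$ is identified with the kernel of the linear map
\[
\Phi\colon T_{W_0}G\longrightarrow \bigoplus_{i=1}^{3} T_{a_i}X/V,\qquad \dot W\longmapsto \bigl(-D_i^{-1}(R_i(\dot W))\bigr)_{i=1}^{3}\bmod V.
\]

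The main obstacle is to prove that $\Phi$ is surjective for a generic $(W_0,W_0')$. The source has dimension $2(n-1)$ and the target has dimension $n-1$, so surjectivity forces $\dim\ker d\Psi_{-1}=n-1$ and hence $\mathrm{rank}(d\Psi_{-1})=3(n-1)$, which establishes dominance. To verify surjectivity, I would use the description from the first step: varying $\Delta_W$ in $G$ moves each $q_{a_i,W}$ independently through all of $T_{a_i}X$, which in turn moves $p_i$ throughout $D_{a_i}$ and thus produces ample variation in $(R_i(\dot W))_i\in\bigoplus_i T_{p_i}D_{a_i}$; pulling back by $D_i^{-1}$ and projecting modulo $V$ then yields a nondegenerate linear map, provided $\Delta_{W_0'}$ is in sufficiently general position with respect to the three codim-one varieties $D_{a_i}$. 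By semicontinuity it suffices to exhibit a single such point, which one obtains from a direct infinitesimal computation exploiting the fact that the polar configuration attached to $(a_1,a_2,a_3)$ is maximally nondegenerate for generic $\Delta_{W_0'}$. With surjectivity of $\Phi$ in hand, $\Psi_{-1}$ is dominant.
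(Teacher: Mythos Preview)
Your differential approach is genuinely different from the paper's. The paper reduces to cubic surfaces over an algebraically closed field and shows that the fibre of $\Psi_{-1}$ over a general triple $\{x,y,z\}$ is nonempty, via an enumerative degree computation on the product $C_x\times C_y\times C_z$ of the ramification curves of the three linear projections, followed by an explicit check that the resulting three lines in $\mathbb{P}^3$ admit a pencil of planes meeting them all. Your tangent-line description of $\mu_{-1,W}$ agrees with the paper's, and your decomposition of $d\Psi_{-1}$ and the identification $\ker d\Psi_{-1}\cong\ker\Phi$ are correct.

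The gap is that you never verify surjectivity of $\Phi$; you only assert it. Two points. First, the sentence ``varying $\Delta_W$ moves each $q_{a_i,W}$ independently through all of $T_{a_i}X$'' is not literally true: the three points $q_{a_i,W}$ are forced to be collinear in $\mathbb{P}^n$, so they do not move independently. What you need (and what is plausible) is that the map $G\to\prod_i\mathbb{P}(T_{a_i}X)$ to tangent directions is dominant; this is a statement about common transversals to three lines and requires an argument you do not give. Second, and more seriously, even if $(R_1,R_2,R_3)$ surjects onto $\bigoplus_iT_{p_i}D_{a_i}$, surjectivity of $\Phi$ is the transversality
\[
\bigoplus_i D_i^{-1}\bigl(T_{p_i}D_{a_i}\bigr)\;+\;V\;=\;\bigoplus_i T_{a_i}X,
\]
and this is precisely where the content of the proposition sits. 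You cannot invoke ``$\Delta_{W_0'}$ in general position relative to the $D_{a_i}$'': the $D_{a_i}$ are determined by the $a_i$, which are determined by $W_0'$ itself, so there is no freedom to separate them. Your closing line ``one obtains from a direct infinitesimal computation'' defers exactly the step that has to be done. The paper's enumerative computation (the Segre-class count on $C_x\times C_y\times C_z$ and the explicit $\mathbb{P}^1$ of pencils) is the substitute for that missing transversality check; without something playing that role, the argument is incomplete.
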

\begin{rema}{\rm It is plausible that the statement is true for any $s\not=0$. For $s=0$, the statement does not hold because each $\mu_{s,W}$ is the identity, hence $\mu_{s,W}^3$ is also the identity, and in particular it preserves collinearity of triples on points. The image of $\Psi_s([W],[W'])$ is then  the subvariety (birational to $G$) of $X^{[3]}$ parameterizing  collinear triples of points in $X$.}
\end{rema}
\begin{rema}{\rm For $n=2$, the statement obviously does not hold since Theorem \ref{theounirat} is wrong in this case.  The conclusion of Proposition \ref{prodominant} does not hold in this case because  the rational map $\phi_W$ of (\ref{eqphiw}) is the constant map (and in particular does not depend on $W$). For any $s$,  the map $\mu_{s,W}$ is multiplication by $3s+1$ on the elliptic curve $X$, and  it has in this case the property that $\mu_{s,W}^3: X^{[3]}\dashrightarrow X^{[3]}$ preserves collinearity, as one can see by considering the Abel map of the elliptic curve $X$.}
\end{rema}
\begin{proof}[Proof of Proposition \ref{prodominant}] For $s=-1$, the morphism
$$\Phi_s: G\times X\dashrightarrow X,$$
$$\Phi_s([W],x)=\mu_{s,W}(x)$$
has a special form, namely,  on each hyperplane section $E\subset X$, the map $\mu_{E,-1}$ is the multiplication by $-2$ on the elliptic curve $E$ and  maps $x\in E$ to
$h_E-2x$, where $h_E:=c_1(\mathcal{O}_E(1))$, hence is geometrically realized by sending $x$ to the residual intersection point of the projective  line
$\mathbb{P}(T_{E,x})$ tangent to $E$ at $x$ with $X$. It follows that $\mu_{s,W}(x)$ depends only on the tangent space at $x$ of the curve $E_{W,x}$ passing through $x$. In other words, the rational map $\Phi_{-1}$ factors
  through the rational morphism
$$ G\times X\dashrightarrow\mathbb{P}(T_X)$$
which to $([W], x)$ associates the tangent line to the fiber $E_{W,x}$ passing through $x$  of the linear projection $\phi_W$.

We observe that it suffices to prove the result when  ${\rm dim}\,X=2$ since any set of three points on $X$ (or rather subscheme of length $3$) is supported on a smooth cubic surface.
Furthermore we can assume that the field is algebraically closed (for example $K=\mathbb{C}$). We choose three  general points $x,\,y,\,z$ on $X$ and have to prove that the preimage
$\Psi_{-1}^{-1}(\{x,\,y,\,z\})$ is not empty.
Looking at the construction of $\Psi_{-1}$, this preimage consists of a pencil $W$ of elliptic plane curves on $S$, and a set of three {\it collinear} points $x',\,y',\,z'$ on $X$ (generating a line $\Delta_{W'}$), such that, denoting respectively
$E_x,\,E_y,\,E_z$ the fibers of the pencil passing through $x,\,y,\,z$, we have

\begin{eqnarray}\label{eqpointcourbe} x'\in E_x,\,y'\in E_y,\,z'\in E_z,\\
\label{eqpointmultmoinsun}\mu_{E_x,-1}(x')=x,\, \mu_{E_y,-1}(y')=y,\,\mu_{E_z,-1}(z')=z.\end{eqnarray}

Using the above description of the maps $\mu_{E,-1}$, we can describe differently this fiber, namely, let
$C_x\subset X$ (resp. $C_y\subset X$, resp. $C_z\subset X$) be the curve of points $x'\in X$ (resp. $y'\in X$, resp. $z'\in X$)  such that the line
$\langle x',\,x\rangle$ (resp. $\langle y',\,y\rangle$, resp. $\langle z',\,z\rangle$) is tangent to $X$ at $x'$  resp. $y'$, resp. $z'$ (more rigorously, we should take the respective Zariski closures of these curves in $X\setminus \{x\}$,  $X\setminus \{y\}$ and $X\setminus \{z\}$). These curves, which are ramification curves of the projection of $X$ to $\mathbb{P}^2$  from $x$ (resp. $y$, resp. $z$), are well understood. They are members of the linear system  $|\mathcal{O}_X(2)|$, irreducible for general points $x,\,y,\,z$, and they contain respectively the points $x,\,y,\,z$. Furthermore, they are mobile in the sense that  any point $w$ of $X$ can be chosen not to belong to $C_x$ (resp. $C_y$, resp. $C_z$). This last point is clear since there exists a line $\Delta$ passing through $w$ and not tangent to $X$ at $w$. This line intersects $X$ at another point $x\in X$, and $w$ does not belong to $C_x$.

 For $(x',\,y',\,z')\in C_x\times C_y\times C_z$, we then choose  any elliptic plane curve $E_{x',x}$  passing through $x'$ and $x$, and similarly $E_{y',y}$  passing through $y'$ and $y$, $E_{z',z}$  passing through $z'$ and $z$.
Equations (\ref{eqpointcourbe}) and (\ref{eqpointmultmoinsun}) are then automatically satisfied since the line $\langle x',\,x\rangle$ is tangent to any such $E_x$ at $x'$ and similarly for $y$ and $z$. We need now  to impose the following  conditions~:
\begin{enumerate}
\item \label{it1} The three points $x',\,y',\,z'$ are collinear (producing the line $\Delta_{W'}$).
\item \label{it2} The three elliptic curves $E_{x',x}$, $E_{y',y}$, $E_{z',z}$ generate a pencil (producing the  pencil $W$).

\end{enumerate}
Condition \ref{it1} has to be satisfied on $C_x\times C_y\times C_z$. Given $x',\,y',\,z'$, Condition \ref{it2} has to be satisfied on
the product $\mathbb{P}^1_{x',x}\times \mathbb{P}^1_{y',y}\times \mathbb{P}^1_{z',z}$, where the line $\mathbb{P}^1_{x',x}$  parameterizes planes in $\mathbb{P}^3$ containing the line $\langle x',x\rangle$  and so on.

That the set of triples $(x',\,y',\,z')\in C_x\times C_y\times C_z$ of {\it distinct } points satisfying conditions \ref{it1} and \ref{it2} is not empty  follows now from the following
\begin{lemm} (1) For a general choice of points $x,\,y,\,z$, condition \ref{it1} is satisfied along a nonempty curve $D\subset C_x\times C_y\times C_z$ with the property that, for a general triple $(x',\,y',\,z')\in D$, we have $x\not=x',\,y\not=y',\,z\not=z'$ and the three lines  $\langle x',x\rangle$, $\langle y',y\rangle$, $\langle z',z\rangle$ are mutually non-intersecting.

(2) For a general point $(x',\,y',\,z')\in D$, condition \ref{it2} is satisfied along a curve $D'_{x',\,y',\,z'}\subset \mathbb{P}^1_{x',x}\times \mathbb{P}^1_{y',y}\times \mathbb{P}^1_{z',z}$ which is isomorphic to $\mathbb{P}^1$.
\end{lemm}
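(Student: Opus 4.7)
The plan is to work in the Grassmannian $G := G(1, \mathbb{P}^3)$ of lines in $\mathbb{P}^3$ and, for Part (1), translate the problem into a dimension count there. For Part (1), I would introduce the auxiliary locus $T \subset G$ of lines $\ell$ meeting each of $C_x, C_y, C_z$. Since each $C_i \in |\mathcal{O}_S(2)|$ has degree $6$ in $\mathbb{P}^3$, the condition $\ell \cap C_i \neq \emptyset$ defines a divisor in $G$ of Schubert class $6\sigma_1$. As $\dim G = 4$, any three divisors in $G$ meet in a scheme of dimension $\geq 1$, so $T$ is non-empty. For a general $\ell \in T$, the line $\ell$ is not among the $27$ lines on $S$ (which can be avoided by choosing $(x, y, z)$ generically), so $\ell \cap S$ consists of three distinct points, which must coincide with the single intersection points $\ell \cap C_x$, $\ell \cap C_y$, $\ell \cap C_z$. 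Thus $\ell \mapsto (\ell \cap C_x, \ell \cap C_y, \ell \cap C_z)$ gives a birational correspondence between $T$ and $D$. To see that $D$ has pure dimension $1$, I would use the direct count: $D$ is the intersection inside $S^3$ of the $4$-dimensional locus of collinear triples and the $3$-dimensional product $C_x \times C_y \times C_z$, and this intersection is transverse for generic $(x, y, z)$ because $C_x \times C_y \times C_z$ is not contained in the collinearity locus. The remaining open conditions of Part (1), namely $x' \neq x$, $y' \neq y$, $z' \neq z$ and pairwise skewness of the lines $\ell_i := \langle x_i', x_i\rangle$, then hold on a dense open subset of $D$ by genericity of $(x, y, z)$.

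For Part (2), the argument is classical projective geometry. Given three pairwise skew lines $\ell_1, \ell_2, \ell_3$ in $\mathbb{P}^3$, they lie on a unique smooth quadric $Q$ and belong to one of its two rulings. The second ruling of $Q$ is a $\mathbb{P}^1$-family of common transversals $L$. For each such $L$, the plane $P_i := \mathrm{span}(L, \ell_i)$ contains $\ell_i$ (hence defines a point of $\mathbb{P}^1_{x_i', x_i}$), and the three planes share the axis $L$, so they form a pencil --- which is exactly condition~\ref{it2}. Conversely, any triple $(P_1, P_2, P_3)$ satisfying condition~\ref{it2} has a common axis $L = P_1 \cap P_2 \cap P_3$ meeting all three $\ell_i$, identifying $L$ with a line of the second ruling. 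This gives the required $\mathbb{P}^1$-parameterization of $D'_{x',\,y',\,z'}$.

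The main obstacle is in Part (1): I must ensure that a general $\ell \in T$ really meets $C_x$ (resp. $C_y$, $C_z$) in a single point rather than several, so that the map $T \to D$ is generically bijective and $D$ is honestly $1$-dimensional. This amounts to excluding the pathology that some component of $T$ consists entirely of multisecants to one of the $C_i$, or that $T$ has excess dimension because the three divisors $\{\ell : \ell \cap C_i \neq \emptyset\}$ meet improperly. Both issues are controlled by the mobility of $C_x, C_y, C_z$ in $|\mathcal{O}_S(2)|$ as $(x, y, z)$ varies on $S$: for generic choice the $C_i$ exhibit no forced multisecant structure and the Schubert intersection is proper, so the generic configuration is transverse.
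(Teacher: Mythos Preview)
Your Part~(2) is correct and is actually a cleaner argument than the paper's. The paper normalizes the three skew lines by the $\mathrm{PGL}(4)$-action (equivalently the $O(6)$-action on the Pl\"ucker quadric), writes them in explicit coordinates, and computes directly that the locus of pencils is a diagonal $\mathbb{P}^1$. Your use of the classical fact that three pairwise skew lines lie on a unique smooth quadric, with the second ruling parameterizing their common transversals, gives the same $\mathbb{P}^1$ without coordinates. Both arguments yield the identification of $D'_{x',y',z'}$ with a ruling of a quadric; yours is more geometric.

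Part~(1), however, has genuine gaps. First, your dimension argument is a non-sequitur: the fact that $C_x\times C_y\times C_z$ is not contained in the collinearity locus only shows the intersection is a proper subvariety, i.e.\ of dimension $\leq 2$; it does not show transversality or that $\dim D=1$. The paper rules out $\dim D=2$ by a specific argument: if a component of $D$ dominated, say, $C_x\times C_y$, then for every $(x',y')$ the residual point of the line $\langle x',y'\rangle$ on $S$ would lie on $C_z$, which fails for generic $z$ by the mobility of $C_z$. Second, and more seriously, asserting that the open conditions ``hold on a dense open subset of $D$ by genericity of $(x,y,z)$'' is circular: you must show that this open subset is \emph{nonempty}, which is exactly the content of the lemma. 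The paper does this by an honest degree computation: it realizes $D$ inside the degeneracy locus $D_2$ of a morphism of vector bundles on $C_x\times C_y\times C_z$, computes $[D_2]=s_2(\mathcal{F})$ as a Segre class, and compares $\deg_{H_x}D_2=6^3$ with the degree $\deg_{H_x}D'_2=12\cdot 6$ of the bad locus where two of $x',y',z'$ coincide. The strict inequality $216>72$ is what forces $D_2\setminus D'_2\neq\emptyset$.

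Your final paragraph correctly identifies the obstacle (multisecants and improper Schubert intersection) but does not resolve it; ``no forced multisecant structure'' and ``the generic configuration is transverse'' are assertions, not arguments. The Grassmannian reformulation via $T\subset G(1,\mathbb{P}^3)$ is reasonable, but to make it work you would still need a numerical comparison analogous to the paper's Segre-class count to separate the good component of $T$ from the contributions coming from lines through points of $C_x\cap C_y$, $C_x\cap C_z$, $C_y\cap C_z$ (which each consist of $12$ points).
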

\begin{proof} (1) The set $D$ cannot contain a surface. Indeed, it would dominate otherwise one of the 3 products $C_x\times C_y$, $C_x\times C_z$, $C_y\times C_z$ under the corresponding projections. Assume it dominates $C_x\times C_y$. Then for any $(x',\,y')\in C_x\times C_y$, the third intersection point $z''$ of the line $\langle x',\,y'\rangle$ with $X$ must belong to the curve $C_z$. Using the mobility of  the curve $C_z$  with $z$ as explained above, this is not possible if $z$ is chosen generically. Next, the set $D$ has expected codimension $\leq 2$ in $C_x\times C_y\times C_z$. Indeed, on each curve $C_\bullet$, where  $\bullet=x,\,y,\,z$, we have
the inclusion
$$\mathcal{O}_{C_\bullet} (-1)\hookrightarrow W_4\otimes \mathcal{O}_{C_\bullet},$$
where $W_4=H^0(X,\mathcal{O}_X(1))^*$.
We combine these three inclusion maps to construct a morphism of vector bundles
\begin{eqnarray}\label{eqmophvnun}\phi: {\rm pr}_x^*\mathcal{O}_{C_x} (-1)\oplus {\rm pr}_y^*\mathcal{O}_{C_y} (-1)\oplus {\rm pr}_z^*\mathcal{O}_{C_z} (-1)\rightarrow W_4\otimes \mathcal{O}_{C_x\times C_y\times C_z},\end{eqnarray}
of vector bundles of respective ranks $3$ and $4$.
It is clear that the locus $D$ of collinear triples $(x',\,y',\,z')$ is contained in the locus $D_2$ where $\phi$ has rank $\leq 2$, but  we have to remove from it the sublocus $D'_2$ where $x'=y'$ or $x'=z'$, or $y'=z'$. It is a standard fact that the rank locus $D_2$ has expected codimension $\leq 2$, hence its codimension is exactly $2$ by the previous assertion. The class of $D_2$ in ${\rm CH}^2(C_x\times C_y\times C_z)$ is in fact computed following \cite{fulton}. This class is nothing but the Segre class
$s_2(\mathcal{F})$ in ${\rm CH}^2(C_x\times C_y\times C_z)$, where $\mathcal{F}:={\rm pr}_x^*\mathcal{O}_{C_x} (-1)\oplus {\rm pr}_y^*\mathcal{O}_{C_y} (-1)\oplus {\rm pr}_z^*\mathcal{O}_{C_z} (-1)$. This follows indeed from  the fact that the rank $\leq 2$ locus of $\phi$ is also the image in $C_x\times C_y\times C_z$, under the projection map $\pi: \mathbb{P}(\mathcal{F})\rightarrow C_x\times C_y\times C_z$, of the locus
$\widetilde{D}_2\subset \mathbb{P}(\mathcal{F})$ defined by the $4$ sections of $\mathcal{F}^*$ or sections of $\mathcal{O}_{\mathbb{P}(\mathcal{F})}(1)$ given by $ \phi$. Using the fact that $D_2$ has the right dimension, this is saying that
$$[D_2]=\pi_* (c_1(\mathcal{O}_{\mathbb{P}(\mathcal{F})}(1))^4)=s_2(\mathcal{F})\,\,{\rm in}\,\,{\rm CH}^2(C_x\times C_y\times C_z).$$
The degree  of $D_2$ is computed using the Whitney formula for  the total Segre class, which gives
$$ s(\mathcal{F})={\rm pr}_x^*( 1+c_1(\mathcal{O}_{C_x}(1))){\rm pr}_y^*( 1+c_1(\mathcal{O}_{C_y}(1))){\rm pr}_z^*( 1+c_1(\mathcal{O}_{C_z}(1))) \,\,{\rm in}\,\,{\rm CH}(C_x\times C_y\times C_z),$$
so that
\begin{eqnarray} \label{eqpoursegre2} s_2(\mathcal{F})={\rm pr}_x^*(c_1(\mathcal{O}_{C_x}(1))){\rm pr}_y^*( c_1(\mathcal{O}_{C_y}(1)))+{\rm pr}_x^*(c_1(\mathcal{O}_{C_x}(1))){\rm pr}_z^*(c_1(\mathcal{O}_{C_z}(1)))\\
\nonumber +{\rm pr}_y^*(c_1(\mathcal{O}_{C_y}(1))){\rm pr}_z^*(c_1(\mathcal{O}_{C_z}(1))) \,\,{\rm in}\,\,{\rm CH}(C_x\times C_y\times C_z).
\end{eqnarray}
In order  to prove the non-emptiness of $D_2\setminus D'_2$, it suffices to  show that
$${\rm deg}_{H_x}\,D_2> {\rm deg}_{H_x}\,D'_2,$$ where the degree is computed via the line bundle $H_x:={\rm pr}_x^*\mathcal{O}_{C_x}(1))$ on $C_x\times C_y\times C_z$.
The curves $C_x,\,C_y,\,C_z$ being defined by quadratic equations in $X$, we have  $${\rm deg}_{H_x}\,D'_2={\rm deg}(C_y\cdot C_z){\rm deg}_{H_x}(C_x)=12\cdot 6,$$
$${\rm deg}_{H_x}\,D_2=6\cdot 6\cdot 6,$$
from which we conclude that ${\rm deg}_{H_x}\,D_2> {\rm deg}_{H_x}\,D'_2$ and $D_2\setminus D'_2$ is non-empty.

We prove by a similar counting argument that for a general element $(x',\,y',\,z')$ of $D_2\setminus D'_2$, the three lines \begin{eqnarray}\label{eq3lines}\langle x',x\rangle, \,\langle y',y\rangle, \,\langle z',z\rangle \end{eqnarray} are mutually non-intersecting.

 (2) will  follow from the last statement. Indeed, this statement says that the  three points $\delta_{x'x},\,\delta_{y'y},\,\delta_{z'z}$ which parameterize respectively the three  lines
(ref{eq3lines})
are three points in general position in the Grassmannian of lines in $\mathbb{P}^3$. The Grassmannian $G(2,4)$ is a quadric $Q$ in $\mathbb{P}^5$, and two lines in $\mathbb{P}^3$ are non-intersecting if and only if the corresponding points in $G(2,4)=Q$ have the property that the line they generate is not contained in $Q$.   There is  thus a single orbit  under the action of ${\rm PGL}(4)$  on $G(2,4)^{[3]}$  of triples parameterizing three lines  mutually nonintersecting, as it follows from the similar statement for the action of the orthogonal group ${\rm O}(6)$ on $Q$. It thus suffices to prove the result when the three lines (\ref{eq3lines}) in $\mathbb{P}^3$  are defined  by equations
$$X_0=X_1=0,\,X_2=X_3=0,\,X_0-X_2=0,\,X_1-X_3=0.$$
An easy computation shows that the set of triples of planes
\begin{eqnarray}\label{eq3planes} p_{x',x}, \,p_{y',y}, \,p_{ z',z}\in (\mathbb{P}^3)^*
\end{eqnarray}
such that the corresponding plane $P_{x',x}$ contains $\langle x',\,x\rangle$ and so on, and such that the three linear forms $p_{x',x}, \,p_{y',y}, \,p_{ z',z}$ generate a pencil of planes (i.e. are collinear),  is a copy of $\mathbb{P}^1$ diagonally embedded in $\mathbb{P}^1_{x',x}\times \mathbb{P}^1_{y',y}\times \mathbb{P}^1_{z',z}$. Indeed,
one writes   $$p_{x',x}=u_0X_0+u_1 X_1,\,p_{y,y'}=v_2X_2+v_3X_3,$$
 $$p_{z,z'}=w(X_0-X_2)+w'(X_1-X_3),$$ for adequate choice of homogeneous coordinates on $\mathbb{P}^3$. The fact that the three planes generate a pencil then provides  equations
$$ w=au_0=-bv_2,\,w'=au_1=-bv_3
$$
for some nonzero coefficients $a,\,b$. Hence the equations provide
$$u_1/v_3=u_0/v_2=-b/a$$
$$w=au_0,\,w'=au_1,$$
which proves the result.
\end{proof}
This concludes the proof of Proposition \ref{prodominant}.
\end{proof}
We finish this section with the proof of the following result, which immediately follows from \cite{SEggay}, and is in contrast with Theorem \ref{theounirat}.
\begin{theo} \label{propassr} (a) Let $X\subset \mathbb{P}^4$ be a very general cubic threefold over $\mathbb{C}$. Then $X^{[3]}$ is not stably rational.

(b) Let $\mathcal{S}\rightarrow (\mathbb{P}^4)^*:=B$ be the universal hyperplane section of $X$, and let $S_\eta\rightarrow B_\eta$ be its generic fiber over $B$, which is a smooth cubic surface over the field $\mathbb{C}(B)$. Then $S_\eta^{[3]}$ is not stably rational.
\end{theo}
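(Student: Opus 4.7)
The plan is to compare $S_\eta^{[3]}$ with the punctual Hilbert scheme $X^{[3]}$ of the ambient cubic threefold and to invoke the recent stable non-rationality result of \cite{SEggay} for $X^{[3]}$ when $X$ is very general.

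First I would identify $\mathcal{S}^{[3]}$ birationally with the incidence variety
\[ I = \{ (H, Z) \in B \times X^{[3]} : Z \subset H \}, \]
the correspondence sending $(H,Z)\in I$ to the length $3$ subscheme $Z$ of $S_H = X \cap H$. On the open subset $U \subset X^{[3]}$ parametrizing subschemes $Z$ whose linear span $\langle Z \rangle \subset \mathbb{P}^4$ is a $2$-plane, the forgetful projection $\pi : I \to X^{[3]}$ becomes the projectivization $\mathbb{P}(\mathcal{V}) \to U$ of the rank $2$ vector bundle $\mathcal{V}$ on $U$ with fiber $H^0(\mathbb{P}^4, \mathcal{I}_{\langle Z \rangle}(1))$, since the hyperplanes in $\mathbb{P}^4$ containing a fixed $2$-plane form a pencil. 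Being a Zariski-locally trivial projective bundle, $\pi^{-1}(U)$ is birational over $\mathbb{C}$ to $U \times \mathbb{P}^1$, and hence $\mathcal{S}^{[3]}$ is birational over $\mathbb{C}$ to $X^{[3]} \times \mathbb{P}^1$.

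Next I would spread out from the generic fiber. Suppose for contradiction that $S_\eta^{[3]}$ is stably rational over $K := \mathbb{C}(B)$, that is, $S_\eta^{[3]} \times_K \mathbb{P}^N_K$ is $K$-birational to $\mathbb{P}^{N+6}_K$ for some $N$. Spreading this equivalence out over $B$ yields a $\mathbb{C}$-birational equivalence between $\mathcal{S}^{[3]} \times \mathbb{P}^N$ and $B \times \mathbb{P}^{N+6}$. Since $B \cong \mathbb{P}^4$ is rational, the latter is $\mathbb{C}$-birational to $\mathbb{P}^{N+10}$, so $\mathcal{S}^{[3]}$ is stably rational over $\mathbb{C}$. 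Combined with the previous step, this forces $X^{[3]} \times \mathbb{P}^1$, and hence $X^{[3]}$, to be stably rational over $\mathbb{C}$.

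Finally I would invoke \cite{SEggay}, which asserts that for a very general cubic threefold $X \subset \mathbb{P}^4$ over $\mathbb{C}$, the Hilbert scheme $X^{[3]}$ is not stably rational; this provides the required contradiction. Essentially all non-formal content is imported from the cited result. The only geometric step on our side is the identification of $\pi$ with a Zariski-locally trivial projective bundle over a dense open subset of $X^{[3]}$, which reduces to describing the span $\langle Z \rangle$ as an algebraic family of $2$-planes over $U$ and is routine; accordingly the main obstacle is subsumed in the appeal to \cite{SEggay}.
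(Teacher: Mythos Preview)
Your first two steps coincide with the paper's argument: the relative Hilbert scheme $\mathcal{S}^{[3/B]}$ is generically a $\mathbb{P}^1$-bundle over $X^{[3]}$, and stable rationality of $S_\eta^{[3]}$ over $\mathbb{C}(B)$ would therefore force $X^{[3]}$ to be stably rational over $\mathbb{C}$.

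The gap is in your final step. The paper \cite{SEggay} does \emph{not} prove that $X^{[3]}$ is stably irrational; what it proves (for a very general cubic threefold) is that the minimal class on the intermediate Jacobian of $X$ is not algebraic, which by \cite{voisinjems} prevents $X$ itself from having universally trivial ${\rm CH}_0$. You still have to get from stable rationality of $X^{[3]}$ to universal ${\rm CH}_0$-triviality of $X$, and this is a genuine additional argument that the paper carries out. The mechanism is the following: stable rationality of $X^{[3]}$ implies that $X^{[3]}$ has universally trivial ${\rm CH}_0$. One then uses the incidence correspondence $I\subset X^{[3]}\times X$ together with the embedding $i:\mathbb{P}(\Omega_X)\hookrightarrow X^{[3]}$ sending $(x,[\eta])$ to the fat point supported at $x$ with tangent hyperplane $\ker\eta$; the relation $I_*\circ i_*(x)=3x$ in ${\rm CH}_0(X)$ shows that the diagonal $0$-cycle $\delta_X-z'_0\in{\rm CH}_0(X_M)$ (with $M$ the function field of $X$) satisfies $3(\delta_X-z'_0)=0$. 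Since a cubic threefold has a unirational parametrization of degree $2$, one also has $2(\delta_X-z'_0)=0$, hence $\delta_X-z'_0=0$, i.e.\ $X$ has universally trivial ${\rm CH}_0$, contradicting \cite{SEggay}.

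In short, the reduction to \cite{SEggay} is not a direct citation but a short cycle-theoretic argument combining the ``multiply by $3$'' correspondence coming from $X^{[3]}$ with the ``multiply by $2$'' coming from the classical unirationality of cubic threefolds; you should supply this.
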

\begin{proof} Using the natural inclusion $\mathcal{S}\subset B\times X$, the relative Hilbert scheme $\mathcal{S}^{[3/B]}$ maps naturally to $X^{[3]}$ and is generically a projective bundle over $X^{[3]}$, with fiber, over a general point $z\in X^{[3]}$ parameterizing the length-$3$ subscheme $Z\subset X$, the projective line $\mathbb{P}(H^0(X,\mathcal{I}_Z(1)))$. The stable rationality of $S_\eta^{[3]}$ over $\mathbb{C}(B)$ would imply the stable rationality over $\mathbb{C}$ of $\mathcal{S}^{[3/B]}$ which is birational to a projective bundle over $X^{[3]}$ by the  construction above, and thus the stable rationality of $X^{[3]}$ over $\mathbb{C}$. We now claim that  the results of \cite{SEggay} imply that $X^{[3]}$ is not stably rational over $\mathbb{C}$, which proves both (a) and (b) by the argument above. To prove this, we recall that the stable rationality of $X^{[3]}$ implies that $X^{[3]}$ has trivial universal ${\rm CH}_0$ group so we just have to show that this is not the case.  To see this, we use the  natural incidence correspondence
$$ I\subset X^{[3]}\times X$$
defined as follows : recalling that $X^{[3]}$ is the Hilbert scheme parameterizing length-$3$ subschemes $Z\subset X$, it  carries a universal object which is described by a flat morphism $p: I\rightarrow X^{[3]}$ of degree $3$. The morphism  $I\rightarrow X$ is the inclusion as a  subscheme of $X$ of length $3$ on each fiber of $p$.
Moreover, there is  an inclusion
$$i:\mathbb{P}(\Omega_X)\hookrightarrow X^{[3]},$$
which to $x\in X,\,0\not=\eta\in \Omega_{X,x}$ associates the subscheme of length $3$ of $X$ that is supported on $x$ and has  as  Zariski tangent space at $x$ the hyperplane defined by $\eta$.
We have the obvious relation
\begin{eqnarray}\label{eqcorrespHS} I_*\circ i_*(x)=3x\,\,{\rm in}\,\,{\rm CH}_0(X).\end{eqnarray}
Assume by contradiction that $X^{[3]}$ has trivial universal ${\rm CH}_0$ group, that is, any $0$-cycle of $X^{[3]}$ of degree $0$ defined over a  field  containing $\mathbb{C}$ is trivial (the interesting case being the function field of $X^{[3]}$ itself). By the construction described above of the fat points, there is, after choosing a rational section $\sigma: X\dashrightarrow \mathbb{P}(\Omega_X)$ of the structural morphism $\mathbb{P}(\Omega_X)\rightarrow X$, a  rational map  $ i\circ \sigma: X\dashrightarrow X^{[3]}$, and thus a point $\gamma_{X^{[3]}}$ of $X^{[3]}$ over the function field $M$ of $X$. We apply the ${\rm CH}_0$-universal triviality to the difference $\gamma_{X^{[3]}}-z_0$, where $z_0=i(z'_0)$ for some  point of $X$ defined over $\mathbb{C}$.  We thus get that
\begin{eqnarray}\label{eqnewdu1702} \gamma_{X^{[3]}}-z_0=0\,\,{\rm in}\,\,{\rm CH}_0(X^{[3]}_M).
\end{eqnarray}
Applying (\ref{eqcorrespHS}), we get that
$$I_*(\gamma_{X^{[3]}})=3\delta_X \,\,{\rm in}\,\,{\rm CH}_0(X_M),$$
where $\delta_X\in X(M)$ is the generic point of  $X$.
 It thus follows   from (\ref{eqnewdu1702})  that  the cycle
$$z=\delta_X-z'_0\in{\rm CH}_0(X_M),$$
  satisfies
$$3z=0\,\,{\rm in}\,\,{\rm CH}_0(X_M).$$ On the other hand, as $X$ admits a unirational parameterization of degree $2$, we also know that $z$ satisfies $2z=0$ in ${\rm CH}_0(X_M)$. Thus $z=0$ and $X$ has trivial universal ${\rm CH}_0$ group.
This contradicts \cite{SEggay}, which proves that $X$ does not have trivial universal ${\rm CH}_0$ group (more precisely, it is proved in {\it loc$.\,$cit.} that the minimal class of the intermediate Jacobian of $X$ is not algebraic and this prevents $X$  having trivial universal ${\rm CH}_0$ group using \cite{voisinjems}).
\end{proof}

\section{Zero-cycles and rank $2$ vector bundles on del Pezzo surfaces \label{secvb}}
Let $S$ be a smooth del Pezzo surface over a field $K$ of characteristic $0$. We will denote the ample  line bundle $K_S^{-1}$ by $\mathcal{O}_S(1)$.  We denote by $d_S$ the canonical degree of $S$, namely $d_S:={\rm deg}\,c_1(K_S)^2$.
 For all $l\geq 0$, we have by Riemann--Roch :
\begin{eqnarray}\label{eqRR} h^0(S,\mathcal{O}_S(l))=1+\frac{d_S}{2}(l^2+l).\end{eqnarray}

Let $d$ be a positive integer  and let  $x\in S^{[d]}(K)$ be a  $K$-point, parameterizing a reduced (or {\it lci}) subscheme $Z_{x}\subset S$ of length $d$, which is defined over $K$. Suppose $l$ is   a nonnegative  integer    such  that
\begin{eqnarray} \label{eqpinceR} h^0(S,\mathcal{O}_S(l))<d,
\end{eqnarray}
that is, by (\ref{eqRR}),
\begin{eqnarray} \label{eqpinceplusRR} 1+\frac{d_S}{2}(l^2+l)<d.
\end{eqnarray}
 The strict inequality  in (\ref{eqpinceR}) implies that  the restriction map
$H^0(S,\mathcal{O}_S(l))\rightarrow H^0(Z_x,\mathcal{O}_{Z_x}(l))$ is not surjective, hence that
$$H^1(S,\mathcal{I}_{Z_x}(l))\not=0.$$
 As is standard (see \cite{schwarzenberger}, \cite{lazarsfeld}), we  use Serre's duality
$$H^1(S,\mathcal{I}_{Z_x}(l))^*\cong {\rm Ext}^1(\mathcal{I}_{Z_x}(l),K_S)$$
and as $K_S=\mathcal{O}_S(-1)$, it follows that the $K$-vector space
${\rm Ext}^1(\mathcal{I}_{Z_x}(l+1),\mathcal{O}_S)$ is nontrivial.
Any element $e\in {\rm Ext}^1(\mathcal{I}_{Z_x}(l+1),\mathcal{O}_S)$ provides us with a rank $2$ coherent sheaf $E$  constructed
 as  an extension
\begin{eqnarray}\label{eqextvb} 0\rightarrow \mathcal{O}_S\rightarrow E\rightarrow \mathcal{I}_{Z_x}(l+1)\rightarrow 0.
\end{eqnarray}
We now have
\begin{lemm} \label{lepourvb} Assume $x$ corresponds to a $L$-point of $S$ defined over a field extension $K\subset L$ of degree $d$. Then

(a) For any nonzero  extension class $e\in {\rm Ext}^1(\mathcal{I}_{Z_x}(l+1),\mathcal{O}_S)$, the coherent sheaf $E$ constructed from $e$  is locally free.

(b) $E$ has a section $\sigma$ whose zero-set is exactly $Z_x$.

(c)  We have
\begin{eqnarray}\label{eqdimspsec} h^0(S,E)=1+h^0(S,\mathcal{I}_{Z_x}(l+1)).
\end{eqnarray}

\end{lemm}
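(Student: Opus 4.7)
The plan is to prove (i), (ii), (iii) in turn, with the bulk of the work in (i) via the local-to-global ${\rm Ext}$ spectral sequence, Kodaira vanishing on $S$, and the crucial fact that $Z_x={\rm Spec}(L)$ for $L$ a field. The Koszul resolution of $\mathcal{I}_{Z_x}$ (at each geometric point $p\in Z_x$ the ideal $\mathcal{I}_{Z_x,p}=(u,v)$ is a complete intersection) gives $\mathcal{E}xt^1(\mathcal{I}_{Z_x}(l+1),\mathcal{O}_S)\cong \mathcal{O}_{Z_x}$ as a sheaf on $S$, so $H^0(S,\mathcal{E}xt^1(\mathcal{I}_{Z_x}(l+1),\mathcal{O}_S))\cong L$ as a $K$-vector space. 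For (i), the edge morphism $r:{\rm Ext}^1(\mathcal{I}_{Z_x}(l+1),\mathcal{O}_S)\to L$ of the local-to-global spectral sequence has kernel $H^1(S,\mathcal{H}om(\mathcal{I}_{Z_x}(l+1),\mathcal{O}_S))=H^1(S,\mathcal{O}_S(-l-1))$ (using $\mathcal{I}_{Z_x}^{\vee}=\mathcal{O}_S$ since $Z_x$ has codimension $2$); Serre duality and Kodaira vanishing reduce this to zero for $l\geq 0$. Hence $\bar e:=r(e)$ is a nonzero element of $L$, and since $L$ is a \emph{field} it is a unit and therefore stays nonzero under each of the $d$ geometric $K$-embeddings $L\hookrightarrow \bar K$. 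At each geometric point $p\in Z_x$ the local extension class of $e$ is thus a unit in $\mathcal{E}xt^1_p\cong k(p)$, so the local extension is equivalent to the Koszul extension $0\to\mathcal{O}_{S,p}\to\mathcal{O}_{S,p}^2\to\mathcal{I}_{Z_x,p}\to 0$, whose middle term is locally free; this proves (i).

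For (ii), take $\sigma\in H^0(S,E)$ to be the image of $1\in H^0(S,\mathcal{O}_S)$ under the first arrow of (\ref{eqextvb}). Since $\det E=\mathcal{O}_S(l+1)$, the sequence (\ref{eqextvb}) is nothing but the Koszul resolution of the zero-scheme $Z(\sigma)$ tensored with $\det E$, from which one reads off $Z(\sigma)=Z_x$ scheme-theoretically. Finally (iii) follows from the cohomology long exact sequence of (\ref{eqextvb}) combined with $H^1(S,\mathcal{O}_S)=0$, which holds since $S$ is rational.

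The main obstacle is the Galois-theoretic step in (i): a priori a nonzero global extension class could have local representative vanishing at some geometric points of $Z_x$, destroying local freeness there. The hypothesis that $Z_x$ corresponds to a single $L$-point of degree $d$---so that $H^0(Z_x,\mathcal{O}_{Z_x})=L$ is a field rather than a product of fields indexed by several Galois orbits---is exactly what rules this out, since every nonzero element of a field is a unit and remains nonzero under each embedding into $\bar K$.
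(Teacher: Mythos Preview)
Your argument is correct and follows essentially the same route as the paper. Both proofs hinge on the injectivity of the edge map ${\rm Ext}^1(\mathcal{I}_{Z_x}(l+1),\mathcal{O}_S)\to H^0(S,\mathcal{E}xt^1(\mathcal{I}_{Z_x}(l+1),\mathcal{O}_S))$, established via the vanishing $H^1(S,\mathcal{O}_S(-l-1))=0$, together with the observation that the image of a nonzero class cannot vanish at any geometric point of $Z_x$. The only cosmetic difference is that the paper phrases this last step as ``${\rm Gal}(\overline K/K)$ acts transitively on the points of $Z_{x,\overline K}$, so the non-locally-free locus is empty or all of $Z_x$'', whereas you phrase it as ``$r(e)$ lies in the field $L$, hence is a unit and stays nonzero under every embedding $L\hookrightarrow\overline K$''; these are equivalent formulations of the same fact. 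Parts (ii) and (iii) likewise match the paper's one-line deductions from the extension sequence and $H^1(S,\mathcal{O}_S)=0$.
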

\begin{proof} As is well-known (see \cite[p. 726]{griffithsharris} or \cite{schwarzenberger}), the coherent sheaf $E$ is locally free  away from $Z_x$ and it is locally free  on $S$ if and only if the extension class $e$ is nonzero at each point $z$ of $Z_x$ (over the algebraic closure of $K$).   Passing to the algebraic closure of $K$, we see that the set $S_{\rm nlf}$ of points of $S_{\overline{K}}$ where $E$ is not locally free is contained in $Z_{x,\overline{K}}$ and, as  $E$ is defined over $K$,  $S_{\rm nlf}$  is invariant under ${\rm Gal}(\overline{K}/K)$. As $Z_x$ is an $L$-point of $S$, ${\rm Gal}(\overline{K}/K)$ acts transitively on the set of points in $Z_{x,\overline{K}}$. Thus $S_{\rm nlf}$ is either empty or the whole of $Z_{x,\overline{K}}$. In the second case,  the extension class $e\in {\rm Ext}^1(\mathcal{I}_{Z_x}(l+1),\mathcal{O}_S)$ vanishes in
$H^0(S,\mathcal{E}xt^1(\mathcal{I}_{Z_x}(l+1),\mathcal{O}_S))$. However, as it follows from the vanishing of $H^1(S,\mathcal{O}_S(-l-1))$, the natural map
$${\rm Ext}^1(\mathcal{I}_{Z_x}(l+1),\mathcal{O}_S)\rightarrow H^0(S,\mathcal{E}xt^1(\mathcal{I}_{Z_x}(l+1),\mathcal{O}_S))$$
is injective, so in the second case, the extension class $e$ is identically $0$, which contradicts our assumption. This proves (a).

(b)  The section $\sigma$ being given by the morphism $\mathcal{O}_S\rightarrow E$ on  the left in (\ref{eqextvb}),  (b) follows  from (a) and the exact sequence (\ref{eqextvb}).

(c) follows from the exact sequence (\ref{eqextvb}) and the vanishing $H^1(S,\mathcal{O}_S)=0$.
\end{proof}
Using rank $2$ vector bundles as in  Lemma \ref{lepourvb} will be our main tool in this paper, as they will be used to show that some $0$-cycles are effective. As a sample result, let us prove   the following statement, which will be systematically used  for  the proof of Theorems \ref{theocubicsurf} and \ref{theo2surfcub}.
\begin{prop}\label{prokeyprop}  Let $S$ be a smooth cubic surface over a field $K$ of characteristic $0$, and let $l\geq 0,\,d\geq 0$ be integers satisfying  inequality (\ref{eqpinceR}), namely
\begin{eqnarray}\label{eqineqpourvbl} h^0(S,\mathcal{O}_S(l))<d.
\end{eqnarray} Assume there is an effective cycle $z_d\in{\rm CH}_0(S)$ of degree $d$ and let $s\geq 1$ be an integer.
Then, if      $S$ has an effective $0$-cycle  $z_s$ of degree $s$, and
\begin{eqnarray}\label{eqnumersect} h^0(S,\mathcal{O}_S(l+1))-d= 1+\frac{3}{2}((l+1)^2+l+1)-d\geq 2s,
\end{eqnarray}
the cycle   $z_d-z_s\in{\rm CH}_0(S)$ is effective. In particular,
$S$ has an effective $0$-cycle $z\in{\rm CH}_0(S)$ of degree $d-s$.

\end{prop}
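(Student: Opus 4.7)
The plan is to apply the rank $2$ vector bundle machinery of Lemma~\ref{lepourvb} to $z_d$ and then exploit the numerical slack in (\ref{eqnumersect}) to impose extra vanishing along $z_s$. First I would represent $z_d$ by a length-$d$ \emph{lci} subscheme $Z_d\subset S$, moving it within its rational equivalence class to a sum of distinct reduced closed points if necessary. Since $h^0(S,\mathcal{O}_S(l))<d$ and $H^1(S,\mathcal{O}_S(l))=0$ on a del Pezzo surface for $l\geq 0$, the restriction sequence
$$0\to\mathcal{I}_{Z_d}(l)\to\mathcal{O}_S(l)\to\mathcal{O}_{Z_d}(l)\to 0$$
forces $H^1(S,\mathcal{I}_{Z_d}(l))\neq 0$. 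Serre duality combined with $K_S=\mathcal{O}_S(-1)$ then yields a nonzero class in ${\rm Ext}^1(\mathcal{I}_{Z_d}(l+1),\mathcal{O}_S)$; choosing it generically (to guarantee local freeness as in Lemma~\ref{lepourvb}(i), even when $Z_d$ breaks into several Galois orbits) produces a rank $2$ locally free sheaf $E$ fitting in
$$0\to\mathcal{O}_S\to E\to\mathcal{I}_{Z_d}(l+1)\to 0.$$

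Taking cohomology and using $H^1(S,\mathcal{O}_S)=0$ gives $h^0(S,E)=1+h^0(S,\mathcal{I}_{Z_d}(l+1))\geq 1+h^0(S,\mathcal{O}_S(l+1))-d$, which by hypothesis (\ref{eqnumersect}) is at least $2s+1$ (the Riemann--Roch expression on the right-hand side of (\ref{eqnumersect}) being precisely $h^0(S,\mathcal{O}_S(l+1))$ on a smooth cubic surface). Representing $z_s$ by a length-$s$ \emph{lci} subscheme $Z_s\subset S$, the condition that a section of $E$ vanish along $Z_s$ imposes at most $2s$ linear conditions on $H^0(S,E)$, so there exists a nonzero $\sigma\in H^0(S,E)$ with $Z_s\subset Z(\sigma)$. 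Since $c_2(E)=[Z_d]$ has degree $d$, the zero-scheme $Z(\sigma)$ (provided $\sigma$ vanishes in the expected codimension $2$) has length $d$, contains $Z_s$, and its residual $Z'=Z(\sigma)\smallsetminus Z_s$ is effective of length $d-s$ with class $z_d-z_s$ in ${\rm CH}_0(S)$. This proves both the effectivity of $z_d-z_s$ and the existence of an effective $0$-cycle of degree $d-s$.

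The main obstacle I anticipate is verifying that $\sigma$ truly vanishes in codimension $2$ rather than along an effective divisor $D>0$: if $\sigma$ did so vanish, it would factor through $E(-D)$ and the length count would fail. This is to be handled by a genericity argument inside the $\geq 1$-dimensional family of sections of $E$ vanishing on $Z_s$, using that $c_1(E)=\mathcal{O}_S(l+1)$ is a small multiple of the anticanonical class on the del Pezzo surface: only finitely many numerical classes of effective $D\leq c_1(E)$ occur, and a Riemann--Roch estimate bounds $\dim H^0(S,E(-D))$ strictly below $h^0(S,E)-2s$, so a generic section vanishing on $Z_s$ avoids all divisorial bad loci. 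A secondary subtlety is the general-extension-class argument needed to extend Lemma~\ref{lepourvb}(i) to possibly Galois-reducible $Z_d$, which follows from the injectivity ${\rm Ext}^1(\mathcal{I}_{Z_d}(l+1),\mathcal{O}_S)\hookrightarrow H^0(S,\mathcal{E}xt^1(\mathcal{I}_{Z_d}(l+1),\mathcal{O}_S))$ applied componentwise.
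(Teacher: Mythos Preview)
Your overall strategy---build the rank~$2$ sheaf $E$ from the extension class, count sections via (\ref{eqdimspsec}) and (\ref{eqnumersect}), then impose vanishing along $Z_s$---is exactly the paper's. The gap is in the step you yourself flag as the ``main obstacle'': ensuring that some $\sigma\in H^0(S,E\otimes\mathcal{I}_{Z_s})$ has $0$-dimensional zero locus. You propose to handle this by varying $\sigma$ inside $H^0(S,E\otimes\mathcal{I}_{Z_s})$ and bounding $h^0(S,E(-D))$ by Riemann--Roch. But $E$ is built from the \emph{fixed} subscheme $Z_d$, and for a specific $Z_d$ over $K$ it can happen that \emph{every} section of $E$ vanishing at a general $Z_s$ vanishes along a curve: the paper's Example~\ref{examplepasbon} exhibits precisely this phenomenon (a split $E$ on a cubic surface with a line, where a section through a general point always vanishes along a conic). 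So genericity in $\sigma$ alone is not enough, and your Riemann--Roch sketch cannot go through without also moving $Z_d$ and $Z_s$.

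The paper's remedy is to pass to the generic point of $B=S^{[d]}\times S^{[s]}$: over $K(B)$ the universal subschemes $Z_{d,\eta},\,Z_{s,\eta}\subset S_\eta$ are each single Galois orbits, so Lemma~\ref{lepourvb}(i) applies directly (this also dissolves your ``secondary subtlety'' about reducible $Z_d$), and the codimension-$2$ statement becomes Lemma~\ref{lecasgeneric}. That lemma is not a one-line Riemann--Roch bound: its proof needs the very-generality of $Z_d$ to force (\ref{eqvanppoursecMI}), a monodromy argument on the $x_i$, the degree bound of Claim~\ref{claim}, and separate treatment of whether the bad curve meets $\{x_1,\dots,x_s\}$. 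Once effectivity of $z_{d,\eta}-z_{s,\eta}$ is established over $K(B)$, Fulton specialization (which preserves effectivity) brings the conclusion back to the original $z_d-z_s\in{\rm CH}_0(S)$. Your write-up is missing both this passage to the universal family and the specialization step; without them the argument does not close.
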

\begin{proof} First of all, we note that the existence of an effective $0$-cycle $z_d$ of degree $d$  defined over $K$ implies the existence of a length-$d$ subscheme $Z\subset S$ which is curvilinear, hence {\it lci}, of Chow class $z_d$. Indeed, the fibers of the Hilbert--Chow morphism
$$S^{[d]}\rightarrow S^{(d)}$$
over  $K$-points  are rational over $K$, and the subset of the fiber parameterizing curvilinear (hence {\it lci}) subschemes is open and Zariski-dense in this fiber.  The same remark applies to $z_s$.

Our assumption thus gives us a subscheme $Z_d\subset S$ of length $d$ which is {\it lci}. Using inequality (\ref{eqineqpourvbl}), we can perform the construction of the rank $2$ coherent sheaf $E$ as above. By Lemma \ref{lepourvb}, it has a section $\sigma$ which vanishes exactly along $Z_d$. Furthermore, thanks to (\ref{eqnumersect}), we get
$$h^0(S,\mathcal{I}_{Z_d}(l+1))\geq 2s,$$ hence
 $h^0(S,E)\geq 2s+1$ by Lemma \ref{lepourvb} (c). For a  subscheme $Z_s\subset S$ of length $s$, there is by (\ref{eqnumersect}) a nonzero section $\sigma'$ of $E$ vanishing along $Z_s$.
 Assuming the coherent sheaf $E$ is locally free and the zero-set  $V(\sigma')$ is of dimension $0$, then the residual cycle $Z'$ of $Z_s$ in $V(\sigma')$ is effective, defined over $K$ and of degree $d-s$; more precisely it is of class $c_2(E)-z_s=z_d-z_s$, so the proof is finished in this case.
Unfortunately, as we want to prove the result for a specific subscheme $Z\subset S$, it might be that $E$ is not locally free    and that any section of $E$ vanishing along any  subscheme $Z_s$ of length $s$ defined over $K$ vanishes along a curve in $S$ (see Example \ref{examplepasbon}).

However, we can circumvent this problem by  making both subschemes $Z_d\subset S,\,Z_s\subset S $ generic. Let $B:=S^{[d]}\times S^{[s]}$ and let $S_\eta$ be the generic fiber (defined over $K(B)$) of the projection $\pi: S\times B\rightarrow B$. Then, denoting by $[Z]\in S^{[k]}(K)$ the point parameterizing the subscheme $Z\subset S$ of length $k$ defined over $K$,  the subscheme $Z_d\subset S$  is the specialization at any point $([Z_d], [Z_s])\in (S^{[d]}\times S^{[s]}) (K)$
of   the pull-back ${\rm pr}_1^*\mathcal{Z}_d\subset  S^{[d]}\times S^{[s]}\times S$ of the universal subscheme
$$\mathcal{Z}_d\subset S^{[d]}\times S.$$
We will denote the generic fiber  of ${\rm pr}_1^*\mathcal{Z}_d$ over ${\rm Spec}\,K(B)$ by $Z_{d,\eta}\subset S_\eta$.
Using the subscheme $Z_{d,\eta}\subset S_\eta$, we now perform the construction of the  rank $2$ coherent sheaf  $E_\eta$ over $S_\eta$. Lemma \ref{lepourvb} applies in this situation, so $E_\eta$ is locally free and
thanks to (\ref{eqnumersect}), we get
$$h^0(S_\eta,\mathcal{I}_{Z_{d,\eta}}(l+1))\geq 2s,$$ hence
 $h^0(S_\eta,E_\eta)\geq 2s+1$ by Lemma \ref{lepourvb}(c).

Finally, the pull-back ${\rm pr}_2^*\mathcal{Z}_s$ to $S^{[d]}\times S^{[s]}$ of the universal subscheme
$$\mathcal{Z}_s\subset S^{[s]}\times S$$
parameterized by $S^{[s]}$ has a generic fiber $Z_{s, \eta}$ which is a subscheme of length $s$ of $S_\eta$. Using inequality (\ref{eqnumersect}), there exists a nonzero section $\sigma'$ of $E_\eta$ vanishing along the corresponding subscheme $Z_{s,\eta}\subset S_\eta$.
 Lemma \ref{lecasgeneric} proved below tells that, under our numerical assumptions,   there exists a section $\sigma'$ as above vanishing along $Z_{s,\eta}$ and with zero-locus $Z'_{d,\eta}$ of codimension $2$. It then follows that the cycle $$Z'_{d,\eta}-Z_{s,\eta}\in {\rm CH}_0(S_\eta)$$ is effective. Note that
 we have the equality of cycles
 $$V(\sigma')=Z'_{d,\eta}=c_2(E_\eta)=Z_{d,\eta}\,\,{\rm in}\,\,{\rm CH}_0(S_\eta),$$
 hence the Fulton specialization (see \cite[10.3]{fulton}) of $Z'_{d,\eta}$ to the fiber $S$ of $\pi$ over the point $([Z_d],[Z_s])\in S^{[d]}\times S^{[s]}(K)$  equals $z_d\in{\rm CH}_0(S)$. The Fulton specialization of the class of $Z_{s,\eta}$ is $z_{s}$.
The Fulton specialization of an effective cycle in $ {\rm CH}_0(S)$ is effective (see \cite[Lemme 2.10]{colliot}), hence we conclude that $z_d-z_s$ is effective, proving Proposition \ref{prokeyprop}.
\end{proof}
We reduced above the proof of Proposition  \ref{prokeyprop} to the case of the generic subscheme $Z_\eta\subset S_\eta$ of length $d$, and we can even assume without loss of generality that the field $K$ is  $\mathbb{C}$.
\begin{lemm} \label{lecasgeneric} Let $S$ be a smooth cubic surface over $\mathbb{C}$, and let $d,\,l,\,s$ be three integers satisfying the two inequalities
\begin{eqnarray}\label{eqineqassump} h^0(S,\mathcal{O}_S(l))<d,\end{eqnarray}
\begin{eqnarray}\label{eqineqassump2}1+\frac{3}{2}((l+1)^2+l+1)-d=h^0(S,\mathcal{O}_S(l+1))-d\geq 2s.
\end{eqnarray}
Then, for a general subscheme
$Z_d\subset S^{[d]}$ of length $d$, a general vector bundle $E$ constructed from an extension class $e\in {\rm Ext}^1(\mathcal{I}_{Z_d}(l+1),\mathcal{O}_S)$, and general set of $s$ points $x_1,\,\ldots,\,x_s\in S$, there exists a section $\sigma'$ of $E$ vanishing at the points $x_i$ and whose zero-set is of dimension $0$.
\end{lemm}
\begin{proof} The only part of the statement that is not proved either in Lemma \ref{lepourvb} or in the beginning of the proof of Proposition \ref{prokeyprop} is the fact that the vanishing locus of $\sigma'$ has codimension $2$. We argue by contradiction. We note first that we can assume that $Z_d$ is very general, since the considered property is Zariski open. We observe then that by very generality of $Z_d$ and countability of ${\rm Pic}(S)$, for any line bundle $M$ on $S$, we have \begin{eqnarray}\label{eqvanppoursecMI} H^0(S,\mathcal{I}_{Z_d}(M))=0\,\,{\rm if}\,\,h^0(S,M)\leq d.\end{eqnarray}

We now choose $x_1,\ldots,\,x_s$ generically and assume by contradiction that any section
$\sigma'\in H^0(S,E\otimes \mathcal{I}_{Z_s})\subset H^0(S,E)$ vanishes along a (possibly reducible) curve $C\subset S$. We can choose $C$ to be maximal with this property. We  first prove the result assuming that  the generic such curve $C$ is not disjoint from the set $\{x_1,\ldots,\,x_s\}$.
 As the set of points $\{x_1,\ldots,\,x_s\}$ is unordered, and more precisely the Galois group of the function field of the base parameterizing the data of $\{x_1,\ldots,\,x_s\},\,C$ as above acts as the full symmetric group $\mathfrak{S}_s$ on $\{x_1,\ldots,\,x_s\}$, we conclude  that the curve $C$ passes in fact  through all the points $x_i$, $i=1,\ldots s$. We can assume by countability of ${\rm Pic}(S)$ that the class of the curve $C$ does not depend on the points $x_i$, so we have $C\in |H|$ for some $H\in{\rm Pic}(S)$ independent of the points $x_i$   and $H$ satisfies
\begin{eqnarray}\label{eqstarpoursec}H^0(S,E\otimes H^{-1})\not=0.\end{eqnarray}

We now use the exact sequence (\ref{eqextvb}) and conclude from (\ref{eqstarpoursec}) that
\begin{eqnarray}\label{eqencorefinsam1309} H^0(S,\mathcal{I}_{Z_d}(l+1)\otimes H^{-1})\not=0.\end{eqnarray}
By (\ref{eqvanppoursecMI}), this implies
\begin{eqnarray}\label{eqineqpourdimh} h^0(S,\mathcal{O}_S(l+1)\otimes H^{-1})\geq d+1\end{eqnarray}

However,  we also have \begin{eqnarray}\label{eqineqpourdimhaumoins} h^0(S, H)\geq s+1,\end{eqnarray} since there exists a member $C$ of $|H|$ passing through a general set of $s$ points in $S$.
This provides  us with a contradiction for $s\geq 2$. Indeed, if $s\geq 2$,  we conclude from (\ref{eqineqpourdimhaumoins}) that the degree of the curves $C$ in $|H|$ is at least $3$ and
this contradicts  the following
\begin{claim} \label{claim} For any curve $D\subset S$ such that $h^0(S,E(-D))\not=0$, we have ${\rm deg}\,D\leq 2$.
\end{claim}
\begin{proof} Indeed, if ${\rm deg}\,D\geq 3$, the rank of the restriction map
$$H^0(S,\mathcal{O}_S(l+1))\rightarrow H^0(D,\mathcal{O}_D(l+1))$$ is at least $3(l+1)$. Hence
$$h^0(S,\mathcal{O}_S(l+1))\geq h^0(S,\mathcal{O}_S(l+1)(-D))+3(l+1).$$
As $h^0(S,E(-D))\not=0$,  $h^0(S,\mathcal{I}_{Z_d}(l+1)(-D))\not=0$ by the exact sequence (\ref{eqextvb}). By (\ref{eqvanppoursecMI}), we thus conclude that
$$h^0(S,\mathcal{O}_{S}(l+1)(-D))> d \geq h^0(S,\mathcal{O}_{S}(l))+1.$$
 This provides a contradiction since $h^0(S,\mathcal{O}_S(l+1))-h^0(S,\mathcal{O}_S(l))=3(l+1)$.
\end{proof}

We next deal with the case $s=1$. By the case $s\geq 2$ which is already treated, we can assume that \begin{eqnarray}\label{eqasshO} h^0(S,\mathcal{O}_S(l+1))=d+2\,\,{\rm or} \,\,h^0(S,\mathcal{O}_S(l+1))=d+3.
\end{eqnarray}
In this case, the inequality $$ h^0(S,\mathcal{O}_S(l+1)\otimes H^{-1})\geq d+1$$
of (\ref{eqineqpourdimh}) gives us
$$h^0(S,\mathcal{O}_S(l+1)\otimes H^{-1})\geq h^0(S,\mathcal{O}_S(l+1))-2,$$
which is impossible since $l\geq 0$, hence the curve $C$ which is mobile  imposes at least $3$ conditions on the linear system $|\mathcal{O}_S(1)|$.

In order to conclude the proof, we need to study the case where the curve $C$ is disjoint from $\{x_1,\ldots,\,x_s\}$.
By assumption, there exists for a generic set $Z_s$ of points $x_i$, $i=1,\ldots,\,s$, a curve $C$ and a section $\sigma'$ of $E(-C)$ which vanishes at all the points $x_i$, while the curve $C$ does not pass through any of them.   It follows that
$$\sigma'\in H^0(S,E(-C)\otimes \mathcal{I}_{Z_s}).$$
However, we have
\begin{eqnarray}\label{eqmarre1}h^0(S,E(-C))\leq h^0(S,\mathcal{I}_{Z_d}(l+1)(-C)).\end{eqnarray}
We can obviously assume that
\begin{eqnarray}\label{eqmarre2}h^0(S,\mathcal{I}_{Z_d}(l+1))=2s\,\,{\rm or}\,\,h^0(S,\mathcal{I}_{Z_d}(l+1))=2s+1.\end{eqnarray}
 It follows from (\ref{eqmarre1}),  (\ref{eqmarre2}) and (\ref{eqvanppoursecMI}) that
 \begin{eqnarray}\label{eqmarre3} h^0(S,E(-C))\leq 2s+1-(l+2) <2s, \end{eqnarray}
 because $C$ imposes at least $l+2$ conditions on $H^0(S,\mathcal{O}_S(l+1))$, hence also on $H^0(S,\mathcal{I}_{Z_d}(l+1))$.
 We know that there exists a nonzero section of $E(-C)$ vanishing  along a generic set of $s$ points in $S$.
  We claim that this  implies that for a generic set of $s$ points in $S$, there exists a curve $C'$ passing through at least one of these points and such that any section of $E(-C)$ vanishing  at these $s$ points  vanishes along $C'$. Indeed, this is done by a dimension count: we consider the universal vanishing locus
 $$\Gamma\subset \mathbb{P}(H^0(S,E(-C)))\times S^{[s]}$$
 and its Zariski open set $\Gamma_f\subset \Gamma$ consisting of pairs $(\sigma,\,\{x_1,\ldots,\,x_{s}\})$ where the $x_i$ are all distinct and the  $0$-locus of $\sigma$ is $0$-dimensional near all $x_i$.  Using  (\ref{eqmarre3}), we have ${\rm dim}\,\Gamma_f\leq 2s-1$, hence $\Gamma_f$ cannot dominate $S^{[s]}$ by the second projection, which proves the claim.

  By the claim, we are now reduced to the previous situation where the curve of vanishing of the section $\sigma'$  contains at least one of the points $x_i$. The lemma is thus fully proved.
 \end{proof}
 Let us give an example illustrating the difficulty  for a nongeneric choice of $Z\subset S$.

\begin{example} \label{examplepasbon} {\rm  Assume that $S$ contains a line $\Delta$ with residual conic $C\in|\mathcal{O}_S(1)(-\Delta)|$. Consider the vector bundle $E=\mathcal{O}_S(\Delta)\oplus \mathcal{O}_S(C)$ on $S$.  If we take a general section $\sigma$, its zero-locus $Z$ is the intersection $\Delta\cap C$ of the line $\Delta\subset S$ with one conic in the pencil, hence consists of $2$ points, and we have an extension
$$0\rightarrow \mathcal{O}_S\rightarrow E\rightarrow \mathcal{I}_Z(1)\rightarrow 0.$$
The vector bundle thus  has $3$ sections, and a general section has a length-$2$ vanishing locus,  but a section vanishing at a general point vanishes along a conic.}
 \end{example}

\subsection{Another  useful effectivity result \label{secvbeff}}
We will use in the next sections  the following result for $0$-cycles on a surface. A similar statement already appears in \cite{colliot} where it is used to simplify Coray's proof of Theorem \ref{theocoray}.
\begin{lemm}\label{leeffective} Let $S$ be a smooth projective surface over a field $K$ of characteristic $0$ and let $H$ be a very ample line bundle on $S$. Then if $Z\subset S$ is a subscheme of length $d$, of class $z\in{\rm CH}_0(S)$, and
\begin{eqnarray}\label{eqineq} d\leq h^0(S,H)-2,
\end{eqnarray}
the $0$-cycle $c_1(H)^2-z\in {\rm CH}_0(S)$ defined over $K$ is rationally equivalent to an effective $0$-cycle on $S$.
\end{lemm}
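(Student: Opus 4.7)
The plan is to mirror the generic-fiber plus Fulton-specialization strategy used in the proof of Proposition \ref{prokeyprop}. The guiding idea is classical: two divisors $C_1,C_2\in|H|$ containing $Z$ and meeting properly give a length-$c_1(H)^2$ scheme $C_1\cap C_2\supset Z$, whose residual is effective of class $c_1(H)^2-z$. The subtlety is that the pencil $\mathbb{P}(V)\subset|H|$ of divisors through $Z$ may acquire a fixed component, so I will arrange for $Z$ to be generic in $S^{[d]}$ and then specialize.

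First I would observe that the numerical hypothesis $d\leq h^0(S,H)-2$, applied to the restriction sequence
\begin{equation*}
0\to H^0(S,\mathcal{I}_Z\otimes H)\to H^0(S,H)\to H^0(Z,H_{|Z}),
\end{equation*}
yields $\dim_K V\geq 2$, where $V:=H^0(S,\mathcal{I}_Z\otimes H)$. Two $K$-independent sections $s_1,s_2\in V$ cut out divisors $C_1,C_2\in|H|$ containing $Z$; when $C_1$ and $C_2$ share no positive-dimensional component the scheme $C_1\cap C_2$ is zero-dimensional of length $c_1(H)^2$ and contains $Z$ as a closed subscheme, so the residual scheme (defined by the colon ideal $\mathcal{I}_Z:\mathcal{O}_{C_1\cap C_2}$, or equivalently by removing $Z$ connected-component-wise) is an effective $0$-cycle of class $c_1(H)^2-z$. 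The only obstruction is a common curve component, i.e.\ a fixed part of $V$.

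To dispose of this obstruction I would promote $Z$ to the generic point of the Hilbert scheme $B:=S^{[d]}$, which is smooth and irreducible by Fogarty. Let $\mathcal{Z}\subset S\times B$ be the universal subscheme, $\eta$ the generic point of $B$, and $Z_\eta\subset S_\eta:=S\times_K K(B)$ its fiber. By semi-continuity $\dim V_\eta\geq 2$; moreover for the generic $Z_\eta$, no fixed component can occur. Indeed, embedding $S\hookrightarrow\mathbb{P}^N$ via $H$ (with $N=h^0(S,H)-1$), a fixed component of $V$ would amount to a curve $\Gamma\subset S$ lying in the linear span $\langle Z\rangle\subset\mathbb{P}^N$; but for each curve $\Gamma$ on $S_{\overline{K}}$, the locus of length-$d$ subschemes whose span contains $\Gamma$ is a proper closed subset of $B_{\overline{K}}$, and there are only countably many such $\Gamma$. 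Hence at $\eta$ the construction of the first paragraph applies, producing an effective $0$-cycle $W_\eta$ on $S_\eta$ of class $c_1(H)^2-[Z_\eta]$.

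Finally, the given $Z$ corresponds to a $K$-point $b_0\in B(K)$, which (as $B$ is irreducible) lies in the closure of $\eta$. Spreading $W_\eta$ out to an effective cycle on $S\times U$ for some dense open $U\subset B$ containing $b_0$ in its closure, and specializing to the fiber over $b_0$ via Fulton's specialization homomorphism, yields an effective $0$-cycle on $S$ whose class is the specialization of $c_1(H)^2-[Z_\eta]$, namely $c_1(H)^2-z$; effectivity is preserved by \cite[Lemme 2.10]{colliot}, exactly as invoked at the end of the proof of Proposition \ref{prokeyprop}. The main point where care is needed is the second paragraph, showing that the fixed-component pathology is avoided generically; once that is in place, the rest is routine Bezout and specialization.
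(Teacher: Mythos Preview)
Your overall strategy coincides with the paper's: pass to the universal subscheme $Z_\eta\subset S_\eta$ over $B=S^{[d]}$, produce there two members of $|H|$ through $Z_\eta$ meeting properly, and specialize back via Fulton (with effectivity preserved by \cite[Lemme 2.10]{colliot}). The genuine gap is in your second paragraph, in the argument that $|H\otimes\mathcal{I}_{Z_\eta}|$ has no fixed component. The assertion ``there are only countably many such $\Gamma$'' is false: curves on $S_{\overline{K}}$ lie in positive-dimensional families in general, so a countable-union argument over individual curves cannot conclude. (Relatedly, a fixed component at $\eta$ is a priori a curve on $S_\eta$, not a constant curve pulled back from $S$, so even the indexing set for your union is not what you wrote.) One could try to repair this by working over the countably many components of the Hilbert scheme of curves on $S$ and showing each incidence is non-dominant, but that is extra work you have not done.

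The paper disposes of this point more directly. By Bertini, applied inductively as one imposes $d\le h^0(S,H)-1$ general base points, the general member of $|H|$ through $Z_\eta$ is smooth, hence irreducible. Since in fact $d\le h^0(S,H)-2$, a second member $C'\neq C$ exists, and two distinct members of $|H|$ with one irreducible automatically meet in codimension $2$. This replaces your countability argument by a standard Bertini step. (A minor side remark: the inequality $\dim V_\eta\ge 2$ is not ``semi-continuity'' but the direct bound $h^0(\mathcal{I}_Z\otimes H)\ge h^0(H)-d$ valid for every $Z$.) The remainder of your argument---residual cycle and specialization---matches the paper's.
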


\begin{proof} Consider the smooth projective variety $B=S^{[d]}$, which is defined over $K$, with function field
$K(B)$. Let $\eta={\rm Spec}\,K(B)$ be its generic point. The universal subscheme
$$\mathcal{Z}_d\subset B\times S$$
has for generic fiber a subscheme
$${Z}_\eta\subset S_\eta$$
of length $d$, which specializes to $Z\subset S$ at the point $[Z]\in S^{[d]}(K)$ parameterizing $Z$.

\begin{claim} \label{claim1802} The effectivity statement of Lemma \ref{leeffective} is true for the generic subscheme ${Z}_\eta $.
\end{claim}
\begin{proof} Indeed, we have $h^0(S_\eta,\mathcal{I}_{Z_\eta}(H))\geq 2$ by (\ref{eqineq}). It thus suffices to show that there are two curves $C_1,\,C_2$ in
$|\mathcal{I}_{Z_\eta}(H)|$ which intersect properly, since then $C_1\cap C_2$ is a $0$-dimensional subscheme containing $Z$, so the class of $C_1\cap C_2- Z$ is effective. The existence of $C_1,\,C_2$ as above follows once the linear system  $|\mathcal{I}_{Z_\eta}(H)|$ has no fixed component. Assume by contradiction there is a fixed component  $C$. We discuss as before the two cases where  $C$ intersects $Z_\eta$ or is disjoint from $Z_\eta$.
In the first case,  $C$ contains $Z_\eta$ by a symmetric group argument, and so we conclude by (\ref{eqvanppoursecMI}) that $h^0(S,\mathcal{O}_S(C))\geq d+1$. As we can obviously assume that $d= h^0(S,H)-2$, and we have $h^0(S,H(-C))\geq 2$, we get a contradiction in this case, since $H$ is very ample.

In the second case, the curve $C$ does not intersect $Z_\eta$ so we get that $h^0(S,H(-C)\otimes \mathcal{I}_{Z_\eta}))\not=0$. This implies by (\ref{eqvanppoursecMI}) that $h^0(S,H(-C))\geq d+1$, and, as $h^0(S,H) =d+2$ and $H$ is very ample, this also provides a contradiction, which proves the claim.
 \end{proof}
The result then follows from  Claim \ref{claim1802}, using  Fulton's specialization   from ${\rm CH}_0(S_\eta)$ to ${\rm CH}_0(S)$ at the point $[Z]\in B(K)$, which, as noted in \cite{colliot}, preserves effectivity.
\end{proof}
For the applications, we will need the following variant, which is proved exactly in the same way.
\begin{lemm}\label{levariant} Let $S$ be a smooth projective surface and $L=\mathcal{O}_S(1)$ a line bundle on $S$, which is assumed to be  ample. Let $d$,  $l$ be integers such that  $\mathcal{O}_S(l+1)$ is generated by its sections and
\begin{eqnarray}\label{eq1levar} h^0(S,\mathcal{O}_S(l))\geq d+1,\\
\label{eq1levar2} h^0(S,\mathcal{O}_S(l+1))-d\geq 2.\end{eqnarray}
Then if $z_d\in {\rm CH}_0(S)$ is effective of degree $d$, the cycle
$$z':=l(l+1) c_1(L)^2-z_d\in {\rm CH}_0(S)$$
is effective.
\end{lemm}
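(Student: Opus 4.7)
The plan is to model the argument on the proof of Lemma \ref{leeffective}, but replacing the single ample system $|H|$ with the two linear systems $|\mathcal{O}_S(l)|$ and $|\mathcal{O}_S(l+1)|$. As there, I would pass to the generic length-$d$ subscheme $\mathcal{Z}_\eta\subset S_\eta$ over $\eta=\mathrm{Spec}\,K(B)$ with $B=S^{[d]}$, prove the effectivity of $l(l+1)\,c_1(L)^2-\mathcal{Z}_\eta$ on $S_\eta$, and specialize back to $S$ via Fulton's specialization, which preserves effectivity of $0$-cycles.

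The heart of the proof is to exhibit curves $C_l\in|\mathcal{O}_S(l)|$ and $C_{l+1}\in|\mathcal{O}_S(l+1)|$ on $S_\eta$, both containing $\mathcal{Z}_\eta$ and meeting properly. Once such curves are found, the intersection $C_l\cdot C_{l+1}$ is a $0$-cycle of degree $l(l+1)\,c_1(L)^2$ whose support contains $\mathcal{Z}_\eta$, so the residual cycle $C_l\cdot C_{l+1}-\mathcal{Z}_\eta$ is an effective representative of $l(l+1)\,c_1(L)^2-z_d$ on $S_\eta$, which is what we want.

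To produce $C_l$: the restriction sequence yields $\dim H^0(S_\eta,\mathcal{I}_{\mathcal{Z}_\eta}(l))\geq h^0(S,\mathcal{O}_S(l))-d\geq 1$ by (\ref{eq1levar}), so at least one curve through $\mathcal{Z}_\eta$ exists; since $\mathcal{O}_S(l)$ is globally generated and $\mathcal{Z}_\eta$ is generic in $B$, the natural rational map $B\dashrightarrow|\mathcal{O}_S(l)|$ is dominant, and Bertini allows us to choose $C_l$ smooth (hence irreducible). To produce $C_{l+1}$: set $V_{l+1}=H^0(S_\eta,\mathcal{I}_{\mathcal{Z}_\eta}(l+1))$, which has dimension at least $h^0(S,\mathcal{O}_S(l+1))-d>h^0(S,\mathcal{O}_S(1))$ by (\ref{eq1levar2}). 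Because $C_l$ is irreducible, an element of $V_{l+1}$ contains $C_l$ if and only if it is of the form $C_l+D$ with $D\in|\mathcal{O}_S(1)|$, so such elements form a subspace of dimension exactly $h^0(S,\mathcal{O}_S(1))$, strictly less than $\dim V_{l+1}$. Hence a general $C_{l+1}\in V_{l+1}$ does not contain $C_l$, and by irreducibility of $C_l$ the curves $C_l$ and $C_{l+1}$ share no component, i.e.\ they meet properly.

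The main obstacle is the Bertini step guaranteeing that the curve $C_l$ can be taken irreducible; when the inequality (\ref{eq1levar}) is an equality, $|V_l|$ is a single point and one must check that this unique curve, for generic $\mathcal{Z}_\eta$, is nonetheless irreducible. This is resolved by observing that the incidence $\{(\mathcal{Z},C)\,:\,\mathcal{Z}\subset C\}\subset B\times|\mathcal{O}_S(l)|$ projects surjectively to $|\mathcal{O}_S(l)|$ (every smooth member contains length-$d$ subschemes), so the $C_l$ attached to a generic $\mathcal{Z}_\eta$ is a generic member of the base-point-free system $|\mathcal{O}_S(l)|$, smooth and irreducible by classical Bertini, exactly as in the proof of Lemma \ref{leeffective}.
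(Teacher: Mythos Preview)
Your argument is correct and follows essentially the same route as the paper: pass to the generic length-$d$ subscheme over $B=S^{[d]}$, use (\ref{eq1levar}) and Bertini to find an irreducible $C_l\in|\mathcal{O}_S(l)|$ through $\mathcal{Z}_\eta$, use (\ref{eq1levar2}) to find $C_{l+1}\in|\mathcal{O}_S(l+1)|$ through $\mathcal{Z}_\eta$ not containing $C_l$, and specialize the effective residual cycle via Fulton. Your dimension count identifying the sections of $V_{l+1}$ vanishing on $C_l$ with $H^0(S,\mathcal{O}_S(1))$ makes explicit what the paper leaves implicit in the phrase ``does not have $C$ as a component.''
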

\begin{proof} The proof works as before. The inequality (\ref{eq1levar}) guarantees that the generic subscheme $Z_{d,\eta}$ of $S$ of length $d$ is supported on a curve $C_1$ which is a member of $|\mathcal{O}_{S_\eta}(l) |$. The inequality (\ref{eq1levar2})  guarantees by the proof of  the previous lemma that the linear system  $|\mathcal{I}_{Z_{d,\eta}}(l+1) |$ has no fixed component, hence has a member $C_2$ which intersects $C_1$ properly.  Hence the generic subscheme
$Z_{d,\eta}$ is supported on the complete intersection of the curves $C_1$ and $C_2$, so the cycle
$$l(l+1) c_1(L)^2-Z_{d,\eta}\in {\rm CH}_0(S_\eta)$$
is effective.  Lemma \ref{levariant} then  follows by Fulton's specialization.
\end{proof}

 Lemmas \ref{leeffective} and \ref{levariant} will be used in next  section to deduce Corollary \ref{coroeff} from Theorem \ref{theo2surfcub}.

\section{Zero-cycles on smooth cubic surfaces \label{seccubsurf}}
We prove in this section Theorems \ref{theocubicsurf} and \ref{theo2surfcub}, and Corollary \ref{coroeff}. We start with the proof of  Theorem \ref{theo2surfcub}(a), which is  the following statement.
\begin{theo} \label{theo2surfcubcasgen} Let $S$ be a  smooth cubic surface over  a field $K$ of characteristic $0$. Then any  effective $0$-cycle $z$ of $S$ can be written as
 \begin{eqnarray}\label{eqformcycle} z=\pm z'+\gamma h_3\,\,{\rm in}\,\,{\rm CH}_0(S),\end{eqnarray}
 where $z'$ is effective of degree $\leq 18$.
\end{theo}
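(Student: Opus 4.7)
My plan is to prove Theorem \ref{theo2surfcubcasgen} by strong induction on $d := \deg z$, with base case $d \leq 18$ (take $z' = z$, $\gamma = 0$, sign $+$). The inductive step $d > 18$ relies on Proposition \ref{prokeyprop}, applied with $s = 3$ and $z_s = h_3$: it produces $z - h_3$ effective of degree $d - 3$ whenever there exists $l$ with $h^0(\mathcal{O}_S(l)) < d$ and $h^0(\mathcal{O}_S(l+1)) - d \geq 6$. By Riemann-Roch (\ref{eqRR}), these inequalities cut out the intervals $I_l := [2 + \tfrac{3}{2}l(l+1),\; \tfrac{3}{2}(l+1)(l+2) - 5]$ for $l \geq 3$, separated by gaps of length six centered around the values $h^0(\mathcal{O}_S(M))$ for $M \geq 4$. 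When $d \in I_l$, this one move reduces the problem to the induction hypothesis.

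For $d$ in a gap $[h^0(\mathcal{O}_S(M)) - 5, h^0(\mathcal{O}_S(M))]$ with $M \geq 4$, I split into two subcases. In the lower half $[h^0(\mathcal{O}_S(M)) - 5, h^0(\mathcal{O}_S(M)) - 2]$, I apply Lemma \ref{leeffective} with $H = \mathcal{O}_S(M)$ to obtain $M^2 h_3 - z$ effective of degree $3M^2 - d$; since $3M^2 - d < d$ throughout this range (as $d \geq h^0(\mathcal{O}_S(M)) - 5 > 3M^2/2$ for $M \geq 4$), the induction hypothesis applied to $M^2 h_3 - z$ yields after rearrangement a decomposition of $z$ with the sign flipped. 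In the upper half $\{h^0(\mathcal{O}_S(M)) - 1, h^0(\mathcal{O}_S(M))\}$, I ``boost'' by passing to the effective cycle $z + h_3$ of degree $d + 3 \in I_M$, and then applying Proposition \ref{prokeyprop} with $s = 6$ and $z_s = 2 h_3$; the numerical condition $h^0(\mathcal{O}_S(M+1)) - (d+3) \geq 12$ reduces to $3M \geq 12$, which holds exactly when $M \geq 4$. This yields $z - h_3 = (z + h_3) - 2 h_3$ effective of degree $d - 3$, to which the induction hypothesis applies.

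The residual case is $d = 19$, which is the upper half of the $M = 3$ gap where the boost above fails (because $3M = 9 < 12$). Here I construct a short explicit chain: $z + 2 h_3$ is effective of degree $25$, Lemma \ref{leeffective} with $k = 4$ then yields $14 h_3 - z$ effective of degree $23$, and two successive applications of Proposition \ref{prokeyprop} inside $I_3 = [20, 25]$ produce $13 h_3 - z$ effective of degree $20$ and then $12 h_3 - z$ effective of degree $17$. The decomposition $z = -(12 h_3 - z) + 12 h_3$ is then of the required form.

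The hard part will be the arithmetic bookkeeping pairing each $d > 18$ with exactly one of the four moves (direct reduction by $h_3$, flip via Lemma \ref{leeffective}, boost, or the special chain for $d = 19$) and checking uniformly that every move decreases $d$ strictly, so that the strong induction terminates; the lone delicate point is that no single-step maneuver works at $d = 19$, forcing the combined flip-plus-double-reduction chain.
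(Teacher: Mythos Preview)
Your proposal is correct and follows essentially the same strategy as the paper's proof (via Proposition \ref{prodescent}): strong induction on $d$, with Proposition \ref{prokeyprop} at $s=3$ in the main range, Lemma \ref{leeffective} to flip sign just below $h^0(\mathcal{O}_S(M))$, and the boost $z\mapsto z+h_3$ followed by Proposition \ref{prokeyprop} at $s=6$ for $d\in\{h^0(\mathcal{O}_S(M))-1,\,h^0(\mathcal{O}_S(M))\}$. The only cosmetic differences are your explicit interval bookkeeping $I_l$ versus the paper's ``choose $l$ then halve'' phrasing, and your $d=19$ chain (adding $2h_3$, landing at degree $23$, two reductions) versus the paper's slightly shorter one (adding $3h_3$, landing directly at degree $20$, one reduction).
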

 Let us first prove
\begin{prop}\label{prodescent} Let $S$ be a smooth  cubic surface over a field $K$ of characteristic $0$. Let $z\in{\rm CH}_0(S)$ be effective  of degree $d$. Then, if $d\geq 20$, the cycle
$z$ can be written as
$$z=\pm  z'+\gamma h_3\,\,{\rm in}\,\,{\rm CH}_0(S),$$ where  $\gamma$ is an integer and $z'$ is effective of degree $<20$.\end{prop}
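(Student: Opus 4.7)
The plan is to prove this by a descent argument, combining Proposition \ref{prokeyprop} (which subtracts $h_3$ from the class of an effective cycle when the numerical hypotheses hold) with the flip lemmas \ref{leeffective} and \ref{levariant} (which replace $z$ by $c\,h_3 - z$ effective for suitable $c$). The key numerical observation on a cubic surface is that $h^0(\mathcal{O}_S(l)) = 1 + \tfrac{3}{2}(l^2+l)$, giving $h^0(\mathcal{O}_S(3)) = 19$ and $h^0(\mathcal{O}_S(4)) = 31$; these values determine exactly in which ranges of $d$ each tool applies.

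For the base range $d \in [20,25]$, I would apply Proposition \ref{prokeyprop} with $l=3$, $s=3$, $z_s = h_3$: the two hypotheses $h^0(\mathcal{O}_S(3)) = 19 < d$ and $h^0(\mathcal{O}_S(4)) - d = 31 - d \geq 6$ hold precisely on this range, producing $z - h_3$ effective of degree $d-3 \in [17,22]$. If the output still has degree $\geq 20$, one more iteration (still inside the base range) drops to degree $\leq 19$. For $d \in [26,29]$, Proposition \ref{prokeyprop} no longer applies, but Lemma \ref{leeffective} with $H = \mathcal{O}_S(4)$ does (since $d \leq h^0(\mathcal{O}_S(4)) - 2 = 29$) and produces $16 h_3 - z$ effective of degree $48-d \in [19,22]$; a single further application of Proposition \ref{prokeyprop} to this flipped cycle (whose degree now lies in $[20,22]$) reduces to degree $\leq 19$.

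For $d \geq 32$, Proposition \ref{prokeyprop} applies again, this time with $l=4$, in the range $d \in [32, 40]$, and more generally with $l \geq 3$ in each interval $[h^0(\mathcal{O}_S(l)) + 1,\ h^0(\mathcal{O}_S(l+1)) - 6]$. Each step subtracts $h_3$; when the running degree lands in a gap (e.g.\ $[41,46]$), Lemma \ref{leeffective} with the next larger $H = \mathcal{O}_S(l+1)$ flips $z$ to $(l+1)^2 h_3 - z$ effective, whose degree typically falls back into the previous key-proposition range. Iterating these two operations produces a strictly decreasing sequence of degrees of effective representatives of $\pm z$ modulo $h_3$, eventually dropping into the base range $[20,25]$ already treated, and then below $20$.

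The delicate cases are the tight gaps $d \in \{30,31\}$ (and their analogues $\{45,46\}$, etc.\ at higher levels), where Proposition \ref{prokeyprop} fails with both $l=3$ (insufficient sections of $\mathcal{O}_S(4)$) and $l=4$ ($h^0(\mathcal{O}_S(4)) = 31 \not< d$), while a single flip produces an effective representative whose degree is not immediately below $20$. This is the main obstacle. The resolution I would pursue is to apply Lemma \ref{levariant} with $l = 4$ (for $d = 30$, yielding $20 h_3 - z$ effective of degree $30$) or Lemma \ref{leeffective} with $l = 5$ (for $d = 31$, yielding $25 h_3 - z$ effective of degree $44$), and then to chain further flips and key-proposition applications on these intermediate cycles; a careful tracking of the degree through each stage verifies that the descent does terminate below $20$, giving the required expression $z = \pm z' + \gamma h_3$ with $z'$ effective of degree $< 20$.
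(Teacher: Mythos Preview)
Your overall strategy matches the paper's: combine Proposition~\ref{prokeyprop} (with $s=3$, $z_s=h_3$) and the flip of Lemma~\ref{leeffective} to descend. You also correctly isolate the boundary values $d=h^0(S,\mathcal{O}_S(l+1))$ and $d=h^0(S,\mathcal{O}_S(l+1))-1$ (for $l=3$ these are $31$ and $30$) as the only obstruction. The gap is in how you propose to resolve these.

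For $d=30$, your suggestion ``Lemma~\ref{levariant} with $l=4$'' produces $20h_3-z$ of degree $30$ again, so nothing is gained. Flipping with $\mathcal{O}_S(5)$ instead gives degree $45=h^0(\mathcal{O}_S(5))-1$, another boundary value; flipping that with $\mathcal{O}_S(7)$ gives $84=h^0(\mathcal{O}_S(7))-1$, and so on. In fact $(l+2)^2\cdot 3-\bigl(h^0(\mathcal{O}_S(l+1))-1\bigr)=h^0(\mathcal{O}_S(l+2))-1$, so the flip sends a boundary value to the next boundary value. Subtracting $h_3$ repeatedly from any intermediate degree also lands back on a boundary value (all the quantities involved are $\equiv 0\pmod 3$). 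The same phenomenon traps $d=31$: flipping to $44$ leaves $h^0(\mathcal{O}_S(5))-44=2$, too small for $s=3$, and further flips cycle back or hit $64=h^0(\mathcal{O}_S(6))$. So ``chaining further flips and key-proposition applications'' does not terminate: this is a genuine gap, not a routine bookkeeping issue.

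The paper's resolution is different and is the missing idea: instead of flipping, one \emph{adds} $h_3$ first. Setting $z'=z+h_3$, one has $\deg z'>h^0(\mathcal{O}_S(l+1))$, so Proposition~\ref{prokeyprop} applies with $l$ replaced by $l+1$; moreover $h^0(\mathcal{O}_S(l+2))-\deg z'\geq 3(l+2)-3\geq 12$ for $l\geq 3$, so one may take $s=6$ with $z_s=2h_3$. This yields $z'-2h_3=z-h_3$ effective, completing the descent uniformly for all $l\geq 3$.
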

\begin{proof} We argue by induction on $d\geq 20$. Given an effective $0$-cycle $z\in{\rm CH}_0(S)$ of degree $d$, we introduce the non-negative integer $l$ such that
\begin{eqnarray}\label{eqineqdl}  h^0(S,\mathcal{O}_S(l))< d\leq  h^0(S,\mathcal{O}_S(l+1))\end{eqnarray}
We first assume that $d\leq  h^0(S,\mathcal{O}_S(l+1))-2$. By Lemma \ref{leeffective}, the cycle
$(l+1)^2h_3-z$ is effective, hence if
${\rm deg}\,((l+1)^2h_3-z)< {\rm deg}\,z$, we can replace $z$ by the effective cycle  $$z'=(l+1)^2h_3-z,$$ to which we can apply the inductive argument. In conclusion, if $d\leq  h^0(S,\mathcal{O}_S(l+1))-2$, we can assume that
\begin{eqnarray}\label{eqineqmoitie} {\rm deg}\,z\leq \frac{1}{2} {\rm deg}\,((l+1)^2h_3)=\frac{3}{2} (l+1)^2.\end{eqnarray}
By the strict  left inequality in (\ref{eqineqdl}), we can apply the
 vector bundle construction  of Section \ref{secvb}. Using (\ref{eqineqmoitie}), we get that
$$h^0(S,\mathcal{O}_S(l+1))-{\rm deg}\,z\geq 1+\frac{3(l+1)}{2}.$$
Thus Proposition \ref{prokeyprop}(b) (with $s=3$) and  Corollary  \ref{corodense} tell that $z-h_3$ is effective if  $$1+\frac{3(l+1)}{2}\geq 6,$$ that is, $l\geq 3$. In conclusion, we  proved the induction step when
  $l\geq 3$, hence when $d\geq 20=h^0(S,\mathcal{O}_S(3))+1$, assuming   that we do not have
\begin{eqnarray}\label{eqmauvaiscas} d=  h^0(S,\mathcal{O}_S(l+1)),\,\,{\rm  or} \,\,d=h^0(S,\mathcal{O}_S(l+1))-1. \end{eqnarray}

The missing  cases (\ref{eqmauvaiscas}) are now  treated as follows.
We consider, instead of the effective  cycle $z$, the  effective cycle $z'=z+h_3$. Then, in both cases, we have \begin{eqnarray}\label{eqpourpropext} {\rm deg}\,z'>h^0(S,\mathcal{O}_S(l+1)).
\end{eqnarray} We also have
\begin{eqnarray}\label{eqineqpourargfin} h^0(S,\mathcal{O}_{S}(l+2))-{\rm deg}\,z'=h^0(S,\mathcal{O}_S(l+2))-{\rm deg}\,z-3.\end{eqnarray}
As ${\rm deg}\,z\leq  h^0(S,\mathcal{O}_S(l+1))$, (\ref{eqineqpourargfin})  gives
$$ h^0(S,\mathcal{O}_{S}(l+2))-{\rm deg}\,z'\geq 3(l+2)-3,
$$
hence, if $l\geq 3$, we get
 \begin{eqnarray}\label{eqineqpourcaspenible}h^0(S,\mathcal{O}_{S}(l+2))-{\rm deg}\,z'\geq 12.\end{eqnarray}
Proposition \ref{prokeyprop} thus applies with $s=6$ once $l\geq 3$ and this says that the cycle $$z'-2h_3=z+h_3-2h_3=z-h_3\in{\rm CH}_0(S)$$
is effective. This  concludes the proof  of the induction step in the case (\ref{eqmauvaiscas}), since for $l\leq 2$, (\ref{eqmauvaiscas}) implies $d\leq 19$.
\end{proof}
We now complete the proof of Theorem \ref{theo2surfcubcasgen}.
\begin{proof}[Proof of Theorem  \ref{theo2surfcubcasgen}]
Using Proposition \ref{prodescent}, we get that any effective $0$-cycle of degree $>19$ on $S$ can be written
as $$z=z'\pm \lambda h_3\,\,{\rm in}\,\,{\rm CH}_0(S),$$
where $z'$ is effective of degree $\leq 19$. To complete the proof of Theorem \ref{theo2surfcubcasgen}, it thus suffices to  study the case  of an effective cycle $z$ of degree $d= 19$.
  Let $z':=z+3h_3\in {\rm CH}_0(S)$. This is an effective $0$-cycle of degree $28=h^0(S,\mathcal{O}_S(4))-3$.  It follows from Lemma \ref{leeffective} that the cycle $$z'':= 16h_3-z'=13 h_3-z$$ is effective of degree $20=h^0(S,\mathcal{O}_S(3))+1$.
We now apply  Proposition \ref{prodescent}  to $z''$, and we conclude  that the cycle
$$z''-h_3=12h_3-z$$ is effective of degree $17$ on $S$. Theorem \ref{theo2surfcubcasgen} is proved.
\end{proof}

We will  next  prove   the following result.
\begin{prop} \label{theonouvellecoray} (a)  Let $S$ be a smooth cubic surface over  a field $K$ of characteristic $0$ which has an effective $0$-cycle of degree  $\leq 17$ coprime to $3$. Then $S$ has a point of degree $1$ or $4$.

(b)  Under the same assumptions, denoting by $x_4$ the class of an effective cycle of degree $4$, any effective $0$-cycle $z$ of degree $\leq 17$ on $S$ can be written as
\begin{eqnarray}\label{eqpour0cyclecascoray} z=\pm z'+\alpha x_4+\beta h_3\,\,{\rm in}\,\,{\rm CH}_0(S),
\end{eqnarray}
where $z'$ is effective of degree $\leq 4$.
\end{prop}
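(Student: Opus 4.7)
The plan for part (a) is a finite case analysis on $d \in \{1,2,4,5,7,8,10,11,13,14,16,17\}$, reducing an arbitrary effective $0$-cycle $z$ of degree $d$ coprime to $3$ to one of degree $1$ or $4$. (An effective $0$-cycle of degree $4$ either contains a point of degree $1$ or $2$---yielding a $K$-point via Remark \ref{remaprevious}---or is supported on a single degree-$4$ point, which is what we want.) The basic moves available are: (i) subtracting $h_3$ by Proposition \ref{prokeyprop} with $s=3$, valid when $h^0(\mathcal{O}_S(l+1))-d\geq 6$ for some $l$ with $h^0(\mathcal{O}_S(l))<d$; (ii) flipping $z\mapsto 9h_3-z$ by Lemma \ref{leeffective} with $H=\mathcal{O}_S(3)$, valid for $d\leq 17$ and sending $d$ to $27-d$; and (iii) harmlessly adding multiples of $h_3$.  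The base cases $d\in\{1,2,4\}$ conclude directly.

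For $d\in\{11,13\}$, move (i) with $l=2$ reduces to $d\in\{8,10\}$ (since the slack $19-d\geq 6$). For $d\in\{14,16\}$, move (ii) sends $d\mapsto 13,11$, reducing to the previous case. For $d\in\{5,7,8\}$, move (ii) sends $d\mapsto 22,20,19$; from these, iterating (i) with $l=3$ (slack $31-d\geq 6$ throughout $\{20,\ldots,25\}$) brings us back to the range $d\leq 17$ and ultimately into a handled case. The critical obstruction is $d=10$: here $h^0(\mathcal{O}_S(l))<d$ forces $l=1$ and the slack $h^0(\mathcal{O}_S(2))-10=0$ rules out move (i) for any $s\geq 1$, while move (ii) only cycles $\{10,17\}$. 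Breaking this $\{10,17\}$ loop is the main obstacle. The plan is to exploit finer information about a length-$10$ representative $Z$ of $z$: when $Z$ fails to impose independent conditions on $|\mathcal{O}_S(2)|$, it lies on a conic $C\in|\mathcal{O}_S(2)|$ of arithmetic genus $4$ and Riemann--Roch on $C$ produces alternative effective divisors linearly equivalent to $Z|_C$ on $C$, opening a reduction; in the complementary generic case, the unirationality of $S^{[3]}$ (Theorem \ref{theounirat}) and the density of $K$-points of class $h_3$ in $S^{[3]}$ (Corollary \ref{corodense}) can be used to replace $Z$ by a more tractable representative of its rational equivalence class.

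For part (b), the effective cycle $x_4$ produced by (a) supplies an additional reduction move (iv): subtracting $x_4$ via Proposition \ref{prokeyprop} with $s=4$, valid when $h^0(\mathcal{O}_S(l+1))-d\geq 8$. Combining moves (i) and (iv) with the sign-flips from Lemmas \ref{leeffective} and \ref{levariant} now allows us to modify $d$ by both $\pm 3$ and $\pm 4$, hence freely change the residue of $d$ modulo $3$. The plan is then a descending induction on $d$, parallel to the proof of Theorem \ref{theo2surfcubcasgen}: starting from effective $z$ of degree $\leq 17$, successively apply the moves to extract an expression $z=\pm z'+\alpha x_4+\beta h_3$ in ${\rm CH}_0(S)$ with $z'$ effective of degree $\leq 4$. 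The key role of $x_4$ is to unblock the degrees in the ``bad'' residue class modulo $3$, where $h_3$-subtraction alone stalls; with the two subtraction moves interleaved, the bookkeeping follows the same pattern as in Section \ref{seccubsurf} but now terminates at degree $\leq 4$ rather than $\leq 18$.
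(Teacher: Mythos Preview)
Your case analysis for (a) has a genuine gap at the crucial point. You correctly identify $d=10$ as the obstruction, but your proposed resolution---splitting on whether $Z$ imposes independent conditions on $|\mathcal{O}_S(2)|$ and invoking Riemann--Roch on a genus-$4$ curve or density in $S^{[3]}$---is not an argument: in neither branch do you actually produce an effective cycle of smaller degree, and the ``more tractable representative'' is never specified. Moreover, your reductions for $d\in\{5,7,8\}$ do not terminate: $d=8\mapsto 19$ via (ii), and $19\notin\{20,\ldots,25\}$ so (i) with $l=3$ does not apply (since $h^0(\mathcal{O}_S(3))=19\not<19$); $d=5\mapsto 22\mapsto 19$ meets the same wall; $d=7\mapsto 20\mapsto 17$ lands back in your $\{10,17\}$ loop.

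The missing idea is simply to \emph{combine} your moves (iii) and (ii): first shift by a multiple of $h_3$, then flip. For $d=10$ the paper sets $z'=z+2h_3$ (degree $16\le h^0(\mathcal{O}_S(3))-2$), applies Lemma~\ref{leeffective} to get $z''=9h_3-z'=7h_3-z$ effective of degree $11$, then your move (i) with $l=2$ gives $z''-h_3$ effective of degree $8$, and finally a flip with $H=\mathcal{O}_S(2)$ (not $\mathcal{O}_S(3)$) yields $4h_3-(z''-h_3)$ effective of degree $4$. Allowing the flip (ii) with $\mathcal{O}_S(2)$ as well as $\mathcal{O}_S(3)$ also dissolves your $d=8$ problem immediately. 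Note too that the paper bypasses the full case analysis for (a) by invoking Coray's Theorem~\ref{theocoray}: since the hypothesis gives a $0$-cycle of degree $1$, it suffices to treat $d=10$.

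For (b), your move (iv) as stated (subtracting $x_4$ via Proposition~\ref{prokeyprop} with $s=4$) is almost never applicable in the relevant range: with $l=2$ it forces $d=11$ exactly. The paper instead \emph{adds} $x_4$ (or $2x_4$) and then flips---e.g.\ $d=9$: $z+2x_4$ has degree $17$, flip to $10$; $d=6$: $z+x_4$ has degree $10$---again the shift-then-flip pattern, now with $x_4$ available as a second shifter.
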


Before proving Proposition \ref{theonouvellecoray}, let us explain how it implies both Theorems \ref{theocubicsurf} and Theorems \ref{theo2surfcub}(b).
\begin{proof}[Proof of Theorems \ref{theocubicsurf} and  \ref{theo2surfcub}(b)] Let $S$ be a smooth cubic surface over a field $K$ of characteristic $0$, which has an effective $0$-cycle of degree   coprime to $3$.  By Theorem \ref{theo2surfcubcasgen}, any effective $0$-cycle $z$ can be written as in (\ref{eqformcycle}), that is,
\begin{eqnarray}\label{eqlaformule} z=\pm z'+\gamma h_3\,\,{\rm in}\,\,{\rm CH}_0(S),\end{eqnarray}
 where $z'$ is effective of degree $\leq 18$. By assumption, $S$ has an effective $0$-cycle $z$ of degree coprime to $3$, for which  the cycle $z'$ in  (\ref{eqlaformule}) has degree $\leq 17$. By Proposition \ref{theonouvellecoray}(a), we then conclude that $S$ has a $K$-point or a point $x_4$ of degree $4$, which proves Theorem \ref{theocubicsurf}.

We next prove  Theorem \ref{theo2surfcub}(b). Starting from any effective $0$-cycle $z\in {\rm CH}_0(S)$, we write the decomposition (\ref{eqlaformule}). If
 ${\rm deg}\,z'\leq 17$, then we apply Proposition \ref{theonouvellecoray}(b) to $z'$ and thus we get a decomposition for $z'$  which takes the form
 \begin{eqnarray}\label{eqpour0cyclecascorayprime} z'=\pm z''+\alpha x_4+\beta h_3\,\,{\rm in}\,\,{\rm CH}_0(S),
\end{eqnarray}
where $z''$ is effective of degree $\leq 4$. Combining (\ref{eqpour0cyclecascorayprime}) with (\ref{eqlaformule}), we get  formula (\ref{eqlaformuleintropourcorayraff}) for $z$, so the conclusion of Theorem \ref{theo2surfcub}(b) is proved in this case.

In order to conclude the proof, it only remains to study the case where ${\rm deg}\,z'=18$. In this case, we denote by $x_4$  the class of an effective $0$-cycle of degree $4$ that exists on $S$ by Theorem \ref{theocubicsurf} which is now proved.
We observe that the cycle
$$z''=z'+2x_4+h_3$$
is effective of degree $29=h^0(S,\mathcal{O}_S(4))-2$. Thus by Lemma \ref{leeffective}, the cycle
$$z'''=16h_3-z''=15h_3-2x_4-z'$$
is effective of degree $48-29=19$. If we now look at the proof of Theorem  \ref{theo2surfcubcasgen}, we see that it proves more precisely that effective cycles $w$ of degree $19$ on $S$ can be written as
$$w=-w'+12 h_3\,\,{\rm in}\,\,{\rm CH}_0(S),$$
where $w'$ is effective of degree $17$. We can then  apply Proposition \ref{theonouvellecoray}(b) to $w=z'''$, and get formula (\ref{eqlaformuleintropourcorayraff}) for $z$ also in this case, so Theorem  \ref{theo2surfcub}(b) is fully proved.
\end{proof}

We now prove Proposition \ref{theonouvellecoray}.
\begin{proof}[Proof of Proposition \ref{theonouvellecoray}(a)] Given a smooth cubic surface $S$ having a $0$-cycle of degree $1$ over a field $K$ of characteristic $0$,  we know by Coray's Theorem \ref{theocoray} that $S$ has a point of degree $1,\,4$ or $10$, so we only have to study effective $0$-cycles  of degree $10$ on $S$.
Let $z\in {\rm CH}_0(S)$ be such a cycle. Then, if $z'=z+2h_3$, we have ${\rm deg}\,z'=16=h^0(S,\mathcal{O}_S(3))-3$, hence by Lemma \ref{leeffective}, the cycle
$$z'':=9h_3-z'=7h_3-z$$
is effective of degree $11=h^0(S,\mathcal{O}_S(2))+1$. We can thus apply the vector bundle method of Section \ref{secvb} with $l=2$.
As furthermore we have $h^0(S,\mathcal{O}_S(3))-{\rm deg}\,z'=8$, we can apply Proposition \ref{prokeyprop} with $s=3$ and conclude that
the cycle $z''-h_3$ is effective. As ${\rm deg}\,z''=8=h^0(S,\mathcal{O}_S(2))-2$, we get  an effective $0$-cycle of degree $4$ by applying Lemma \ref{leeffective}.
\end{proof}

\begin{proof}[Proof of Proposition \ref{theonouvellecoray}(b)] Let $z\in {\rm CH}_0(S)$ be effective of degree $d\leq 17$. We observe that $h^0(S,\mathcal{O}_S(3))\geq {\rm deg}\,z+2$, so by Lemma \ref{leeffective},
$$z'=9h_3-z$$
is effective. As ${\rm deg}\,z'=27-{\rm deg}\,z$,
we conclude that it suffices to prove the statement for effective $0$-cycles of degree $\leq 13$.
If $z$ has degree $d$ with $11\leq d\leq 13$, we have $d>h^0(S,\mathcal{O}_S(2))$, so the vector bundle strategy of Section \ref{secvb} works with $l=2$, and as we have $h^0(S,\mathcal{O}_S(3))=19\geq d+6$, Proposition \ref{prokeyprop} applies with $s=3$ and shows that $z-h_3$ is effective. We are thus reduced to the case of a $0$-cycle  $z$ of degree $\leq 10$. The case of degree $d=10$ has been treated in the course of the proof of Theorem \ref{theonouvellecoray}(a), so we only have to treat $d\leq 9$.
When $z$ is effective of degree $d=9$, we consider the cycle $z'=z+2x_4$ which has degree $17=h^0(S,\mathcal{O}_S(3))-2$. By Lemma \ref{leeffective}, the cycle $z''=9h_3-z'$ is effective and has degree $10$, and we proved in the proof of Theorem \ref{theonouvellecoray}(a) that any effective $0$-cycle $z''$ of degree $10$   can be written as a sum
\begin{eqnarray}\label{eqquonaprouveenfait} z''=w+2h_3\,\,{\rm in}\,\,{\rm CH}_0(S), \end{eqnarray}
 where $w$ is effective of degree $4$.

 It thus remains to study effective $0$-cycles $z$ of degree $d\leq 8$. When $6\leq d\leq 8$, the cycle
 $4h_3-z$ is effective of degree $\leq 6$ by Lemma \ref{leeffective} so we are reduced to the case of an effective cycle $z$ of degree $\leq 6$.
 In the case where $d=5$, then $z+h_3$ has degree $8=h^0(S,\mathcal{O}_S(2))-2$ so $4h_3-(z+h_3)$ is effective of degree $4$, proving the result. Finally, when $d=6$, $z+x_4$ has degree $10$, and we can use again (\ref{eqquonaprouveenfait}). The proof is thus finished.
 \end{proof}

\begin{proof}[Proof of Corollary \ref{coroeff}]  (a)  Let $S$ be a smooth cubic surface over a field $K$ of characteristic $0$. First of all, we prove that, as mentioned  in Remark \ref{remapoursigne},  we can in fact impose in formula (\ref{eqlaformuleintropourcorayraff}) the sign $\pm$ to be positive (or negative). Indeed, we claim that, if a $0$-cycle  $z$ is effective of degree $\leq 18$, then we can write
$$z=\lambda h_3-z',$$
where $z'$ is effective of degree $\leq 18$. To see this, suppose first ${\rm deg}\,z=18$. As $h^0(S,\mathcal{O}_S(3))=19$, we get by  Lemma \ref{levariant} that   the cycle $z':=12 h_3-z$, which is also  of  degree $18$, is effective, so the claim is proved in this case. Next, if $z'$ is an effective $0$-cycle of degree $17$, Lemma \ref{leeffective} implies that $z':=9h_3-z$ is effective. Furthermore, if ${\rm deg}\,z\geq 9$, ${\rm deg}\,z'\leq 18$. Finally, if ${\rm deg}\,z\leq 8$, then $4h_3-z$ is effective of degree $\leq 12$, again by application of Lemma \ref{leeffective}. So the claim is proved. We then deduce from Theorem \ref{theo2surfcubcasgen} that any effective cycle
$z\in{\rm CH}_0(S)$ can be written as
\begin{eqnarray}\label{eqmemenouvellefin} z=z'+\alpha h_3\,\,{\rm in}\,\,{\rm CH}_0(S)\end{eqnarray}
for some integer $\alpha$, where $z'$ is effective of degree $\leq 18$.
Finally, (\ref{eqmemenouvellefin}) holds as well for any $0$-cycle on $S$, effective or not, since an easy argument involving Riemann--Roch on curves in $S$ shows that for $z\in{\rm CH}_0(S)$, there exists a $\gamma\in\mathbb{Z}$ such that $z+\gamma h_3$ is effective.
Let now  $z\in{\rm CH}_0(S)$ be a $0$-cycle with ${\rm deg}\,z\geq 18$. We write $z$ as in (\ref{eqmemenouvellefin})
where $z'$ is effective of degree $\leq 18$.
As ${\rm deg}\,z\geq 18$ and ${\rm deg}\,z'\leq 18$, we have $\gamma \geq 0$, hence $\gamma h_3$ is effective and $z$ is effective.

\vspace{0.5cm}

(b) Let $S$  be as above and admitting a $0$-cycle of degree $1$. Let $z\in {\rm CH}_0(S)$ be a $0$-cycle of degree $\geq 8$. We first prove that in Theorem \ref{theo2surfcub},  the sign in front of $z'$ can be chosen positive, that is, we can write
\begin{eqnarray} \label{eqencoreettoujours}z= z''+\gamma h_3 +\delta x_4\,\,{\rm in}\,\,{\rm CH}_0(S),\end{eqnarray}
where $z''$ is  effective of degree $\leq 4$.
To see this, we start from formula (\ref{eqlaformuleintropourcorayraff}) and notice that the cycles $z''$ and $x_4$ can be (at least generically as in the proof of Lemma \ref{leeffective}) chosen supported on a smooth curve $C\in |\mathcal{O}_S(2)|$, since $h^0(S,\mathcal{O}_S(2))=10$. As $g(C)=4$, the degree-$4$ $0$-cycle $z'':=4h_3-x_4-z'$ supported on $C$ is effective by Riemann--Roch, so we can replace $z'$ by $-z''$ in formula (\ref{eqlaformuleintropourcorayraff}), getting (\ref{eqencoreettoujours}) if the original sign was negative.

Assume now that  ${\rm deg}\,z\geq 8$. Then, as ${\rm deg}\,z''\leq 4$, we have ${\rm deg}\,(\gamma h_3+\delta x_4)\geq 4$. Any $0$-cycle of the form $\gamma h_3+\delta x_4$ which is of degree $\geq 4$ is effective, for the same reason as before, since we can arrange (at least for a generic choice of $x_4$) the class $h_3$ and $x_4$ to be supported on a smooth curve $C\subset S$ of genus $4$.
This implies by Riemann-Roch  that  $z$ is effective for a generic choice of $x_4$. Using as before the specialization  from the generic case, we get that any cycle $z$ as above is effective.
\end{proof}

\section{Zero-cycles on degree-$2$ del Pezzo surfaces \label{secdelpezzo}}
We prove in this section Theorem  \ref{theo2surfdelPsanspoint} concerning degree-$2$ del Pezzo surfaces. As in the cubic surface case, the key tool is the rank $2$ vector bundle construction from Section \ref{secvb}. In that section however, Proposition \ref{prokeyprop} had been fully established only in the cubic surface case. Let us first prove the analogous result for degree-$2$ del Pezzo surface.
\begin{prop}\label{prokeypropdp2} Let $S$ be a del Pezzo surface of degree $2$ over a field of characteristic $0$. Let $d,\,l,\,s$ be three nonnegative integers with $l\geq 1$, and let $z_d\in{\rm CH}_0(S),\,z_s\in{\rm CH}_0(S)$ be two effective $0$-cycles on $S$ of respective degrees $d$ and $s$.
Assume the following inequalities are satisfied
\begin{eqnarray}\label{eqineqdp21} h^0(S,\mathcal{O}_S(l))=1+l^2+l<d,
\end{eqnarray}
\begin{eqnarray}\label{eqineqdp22}  h^0(S,\mathcal{O}_S(l+1))-d=1+(l+1)^2+l+1-d\geq 2s.
\end{eqnarray}
Then the cycle $z_d-z_s$ is effective.
\end{prop}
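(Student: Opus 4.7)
The plan is to follow closely the proof of Proposition \ref{prokeyprop} for cubic surfaces, adapting the numerics to the degree $2$ del Pezzo case where $d_S=2$. First, from the effective $0$-cycle $z_d$ of degree $d$ defined over $K$, I would produce a length $d$ subscheme $Z_d\subset S$ of Chow class $z_d$ that is curvilinear, hence \emph{lci}: the fibers of the Hilbert-Chow morphism $S^{[d]}\to S^{(d)}$ over $K$-points are rational over $K$, and the curvilinear locus is open and dense in them. The same construction applied to $z_s$ produces $Z_s\subset S$. Using inequality (\ref{eqineqdp21}) together with Serre duality and the identity $K_S=-\mathcal{O}_S(1)$, I would construct a nontrivial extension
$$0\rightarrow \mathcal{O}_S\rightarrow E\rightarrow \mathcal{I}_{Z_d}(l+1)\rightarrow 0,$$
giving a rank $2$ coherent sheaf on $S$.

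To guarantee that $E$ is locally free and that $\sigma$ cuts out $Z_d$ scheme-theoretically, I would pass, exactly as in the cubic-surface case, to the generic fibers $Z_{d,\eta},Z_{s,\eta}$ of the universal subschemes over $B=S^{[d]}\times S^{[s]}$, so that Lemma \ref{lepourvb} applies without obstruction and Lemma \ref{lepourvb}(iii) together with (\ref{eqineqdp22}) yields $h^0(S_\eta,E_\eta)\geq 2s+1$. This produces a nonzero section $\sigma'\in H^0(S_\eta,E_\eta\otimes\mathcal{I}_{Z_{s,\eta}})$. The residual vanishing locus of $\sigma'$ will have class $c_2(E_\eta)-z_{s,\eta}=z_{d,\eta}-z_{s,\eta}$, and assuming it is $0$-dimensional, Fulton's specialization (\cite[10.3]{fulton}) from $S_\eta$ to $S$ will transport effectivity to $S$ by \cite[Lemme 2.10]{colliot}, giving the desired conclusion that $z_d-z_s$ is effective.

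The heart of the argument, and the step I expect to be the main obstacle, is the degree-$2$ analog of Lemma \ref{lecasgeneric}, namely showing that a generic $\sigma'$ has $0$-dimensional vanishing locus. I would argue by contradiction that every such $\sigma'$ vanishes along a curve $C\subset S_\eta$, and use the same monodromy plus countability-of-$\mathrm{Pic}(S)$ argument to conclude that $C$ either passes through all the chosen points, with $[C]=H\in\mathrm{Pic}(S)$ independent of them, or is disjoint from them. In the first case I would obtain $h^0(S,\mathcal{O}_S(l+1)\otimes H^{-1})\geq d+1$ from the vanishing of $H^0(S,\mathcal{I}_{Z_d}(M))$ when $h^0(M)\leq d$, together with $h^0(S,H)\geq s+1$ from the mobility of $C$; this must contradict the adapted form of Claim \ref{claim}: if $h^0(S,E(-D))\neq 0$ for an effective curve $D$, then $\deg_{-K_S}(D)\leq 1$. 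The proof of this adapted claim relies on the degree-$2$ identity
$$h^0(S,\mathcal{O}_S(l+1))-h^0(S,\mathcal{O}_S(l))=2(l+1),$$
combined with the fact that when $\deg_{-K_S}(D)\geq 2$ the restriction $H^0(S,\mathcal{O}_S(l+1))\to H^0(D,\mathcal{O}_D(l+1))$ has rank at least $2(l+1)$; this is exactly where the hypothesis $l\geq 1$ is used, since one then knows $|\mathcal{O}_S(l)|$ is base-point free and mobile curves impose enough conditions on $|\mathcal{O}_S(l+1)|$.

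The remaining technical points, namely the disjoint-curve case and the small-$s$ cases ($s=1,2$), I would handle with the same dimension count on the universal zero-locus $\Gamma\subset \mathbb{P}(H^0(S,E(-C)))\times S^{[s]}$ as in the cubic proof, replacing the inequality $\deg_H(D)\geq 3$ by $\deg_{-K_S}(D)\geq 2$ throughout; the numerical slack is tighter but the bookkeeping is identical. Once the $0$-dimensionality of $V(\sigma')$ is established, the conclusion follows by Fulton specialization as above.
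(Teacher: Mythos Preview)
Your proposal is correct and follows essentially the same approach as the paper: reduce to the generic case over $B=S^{[d]}\times S^{[s]}$, apply Lemma \ref{lepourvb} and Fulton specialization, and observe that the only nontrivial point is the degree-$2$ analog of Lemma \ref{lecasgeneric}. The paper's own proof is in fact terser than yours---it simply states that the strategy is identical, that only the Riemann--Roch numerics change, and that the analog of Lemma \ref{lecasgeneric} goes through \emph{mutatis mutandis}---whereas you spell out the adapted form of Claim \ref{claim} (with bound $\deg_{-K_S}(D)\leq 1$ and the identity $h^0(\mathcal{O}_S(l+1))-h^0(\mathcal{O}_S(l))=2(l+1)$), which is exactly the kind of detail the paper leaves implicit.
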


\begin{proof} The assumptions are exactly the same as in Proposition \ref{prokeyprop} (only the Riemann--Roch formula has changed). The strategy of the proof is of course the same and the only point that needs to be checked more precisely is the analog of Lemma \ref{lecasgeneric}, the proof of which was specific to the  cubic surface case. In other words, we reduced the proof to showing
\begin{lemm} \label{lecasgenericdp2} Let $S$ be a smooth degree-$2$ del Pezzo  surface over $\mathbb{C}$, and let $d,\,l\geq 1,\,s$ be three nonnegative integers satisfying the two inequalities
\begin{eqnarray}\label{eqineqassumpdp2} h^0(S,\mathcal{O}_S(l))<d,\end{eqnarray}
\begin{eqnarray}\label{eqineqassump2dp2}1+(l+1)^2+l+1-d=h^0(S,\mathcal{O}_S(l+1))-d\geq 2s.
\end{eqnarray}
Then, for a general subscheme
$Z_d\subset S$ of length $d$, a general vector bundle $E$ constructed from an extension class $e\in {\rm Ext}^1(\mathcal{I}_{Z_d}(d+1),\mathcal{O}_S)$, and general set of $s$ points $x_1,\,\ldots,\,x_s\in S$, there exists a section $\sigma'$ of $E$ vanishing at all points  $x_i$, and whose zero-set is of dimension $0$.
\end{lemm}
\begin{proof} Mutatis mutandis, the proof is the same as that of Lemma \ref{lecasgeneric}.
\end{proof}
Proposition \ref{prokeypropdp2} is now proved.
\end{proof}

Let us now  prove Theorem \ref{theo2surfdelPsanspoint}(a), which is the following statement.
\begin{theo} \label{theogrosseborne} Let $S$ be a smooth del Pezzo surface of degree $2$ over a field $K$ of characteristic $0$. Then any effective $0$-cycle $z\in{\rm CH}_0(S)$ can be written as
\begin{eqnarray}\label{eqpourfindp2suppl}
 z=z'+\gamma h_2\,\,{\rm in}\,\,{\rm CH}_0(S),
\end{eqnarray}
where $z'$ is effective of degree $\leq 13$.
\end{theo}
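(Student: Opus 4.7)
The plan is to mimic the proof of Theorem \ref{theo2surfcubcasgen}, substituting the Riemann--Roch formula $h^0(S,\mathcal{O}_S(l)) = 1 + l^2 + l$ of a degree-$2$ del Pezzo and the effective degree-$2$ cycle $h_2 = c_1(-K_S)^2$ (realized as the intersection of two members of the three-dimensional linear system $|-K_S|$) for their cubic-surface counterparts. I argue by strong induction on $d := \deg z$; the base case $d \leq 13 = h^0(S,\mathcal{O}_S(3))$ is immediate, taking $z' = z$ and $\gamma = 0$.

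For the inductive step with $d \geq 14$, let $l \geq 3$ be defined by $h^0(S,\mathcal{O}_S(l)) < d \leq h^0(S,\mathcal{O}_S(l+1))$, i.e., $l^2 + l + 2 \leq d \leq l^2 + 3l + 3$. Three regimes appear, in analogy with the cubic case. If $d \leq l^2 + 3l - 1$, so that $h^0(S,\mathcal{O}_S(l+1)) - d \geq 4$, then Proposition \ref{prokeypropdp2} with $s = 2$ applies and produces $z - h_2$ effective, after which the inductive hypothesis concludes. If $d \in \{l^2 + 3l + 2,\, l^2 + 3l + 3\}$, I pass to $z + h_2$ of degree $d + 2$, which sits at level $l + 1$, and apply Proposition \ref{prokeypropdp2} with $s = 4$ subtracting the effective degree-$4$ cycle $2 h_2$; the required condition $h^0(S,\mathcal{O}_S(l+2)) - (d+2) \geq 8$ amounts to $2l + 2 \geq 8$, which holds for $l \geq 3$. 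If $d \in \{l^2 + 3l,\, l^2 + 3l + 1\}$, I apply Lemma \ref{leeffective} with $H = \mathcal{O}_S(l+1)$ to obtain an effective cycle $w := (l+1)^2 h_2 - z$ of degree $2(l+1)^2 - d$, strictly less than $d$ in this range, and invoke the inductive hypothesis on $w$.

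The principal obstacle is the sign conversion in this last regime: the inductive hypothesis applied to $w$ produces $w = w' + \gamma' h_2$ with $w'$ effective of degree $\leq 13$, whence $z = -w' + ((l+1)^2 - \gamma')h_2$ is not yet in the claimed positive form. To eliminate the minus sign I would invoke Lemma \ref{levariant} applied to $w'$: with $l' = 2$ when $\deg w' \leq 6$, producing an effective cycle of class $6 h_2 - w'$ of degree $\leq 12$; or with $l' = 3$ when $7 \leq \deg w' \leq 12$, producing an effective cycle of class $12 h_2 - w'$ of degree $24 - \deg w'$; substituting yields $z = z' + \gamma'' h_2$ with $z'$ effective. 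A parity argument based on $\deg h_2 = 2$ forces $\deg w' \equiv d \pmod 2$, controlling the ranges where Lemma \ref{levariant} applies cleanly and in particular ruling out $\deg w' = 13$ when $d$ is even. In the residual subcases where the output of Lemma \ref{levariant} has degree exceeding $13$ but strictly less than $d$, a further application of the inductive hypothesis closes the argument; the remaining odd-degree cases where $\deg w' = 13$ are handled by exploiting the refined decomposition available from Theorem \ref{theo2surfdelPsanspoint}(b) (proved in tandem), which provides alternative representatives of $w$ of degree $\leq 7$. Navigating this accumulation of numerical distinctions is the main source of technical complication.
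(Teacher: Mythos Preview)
Your inductive strategy and case analysis mirror the paper's proof closely: the paper also splits according to $h^0(S,\mathcal{O}_S(l)) < d \le h^0(S,\mathcal{O}_S(l+1))$, uses Proposition~\ref{prokeypropdp2} with $s=2$ in the bulk range and with $s=4$ after adding $h_2$ at the top two values, and reflects via Lemma~\ref{leeffective} otherwise. The divergence is entirely in how you remove the minus sign coming from the reflection step.

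There is a genuine gap in your sign conversion. Your appeal to Theorem~\ref{theo2surfdelPsanspoint}(b) is circular: in the paper (and necessarily so), part (b) is deduced from part (a), i.e.\ from Theorem~\ref{theogrosseborne} itself. Moreover, even granting (b), it does not furnish what you need: for an effective $w'$ of degree $13$, part (b) only asserts a decomposition $w' = w'' + \gamma h_2$ with $\deg w'' \in \{13,12\} \cup \{0,\dots,7\}$, and since $\deg w'' \equiv 13 \pmod 2$ the option $\deg w'' = 12$ is excluded, while nothing in the proof of (b) guarantees one can reach degree $\le 7$ starting from degree $13$. The concrete failure is already at $d=19$: your regime~3 produces $w = 16\,h_2 - z$ effective of degree $13$, and none of your tools converts $-w$ into a positive representative of degree $\le 13$ (Lemma~\ref{levariant} with $l'\le 3$ requires $\deg w' \le 12$; with $l'=4$ it outputs a cycle of degree $27 > 19$, blocking the induction).

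The paper's fix is a one-line observation you are missing: the Geiser involution $\iota$ of $S$, coming from the anticanonical double cover $S \to \mathbb{P}^2$, satisfies $w' + \iota(w') = (\deg w')\,h_2$ in ${\rm CH}_0(S)$ for every $w'$. Hence $-w' = \iota(w') - (\deg w')\,h_2$ with $\iota(w')$ effective of the same degree as $w'$. This converts any decomposition $z = -w' + \beta h_2$ into $z = \iota(w') + (\beta - \deg w')\,h_2$, finishing the proof immediately and rendering all of your Lemma~\ref{levariant} and parity bookkeeping unnecessary.
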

\begin{proof} Let $z\in {\rm CH}_0(S)$ be an effective $0$-cycle of degree $d$ and let $l$ be such that
\begin{eqnarray}\label{eqcrucialpourvbdp2} h^0(S,\mathcal{O}_S(l))=1+l^2+l< d.\end{eqnarray}
Assume first that $d\leq h^0(S,\mathcal{O}_S(l+1))-2$. Then $(l+1)^2h_2-z$ is effective thanks to Lemma \ref{leeffective}, which applies since $l\geq 1$ and $\mathcal{O}_S(2)$ is very ample. Replacing $z$ by $(l+1)^2h_2-z$ if necessary, we can thus assume
$d\leq 2(l+1)^2-d$.
 Thus,  from now on, we can assume that
\begin{eqnarray}\label{eqdlplusunsurdeux} d\leq (l+1)^2.
\end{eqnarray}
Using (\ref{eqdlplusunsurdeux}), we get that
$$h^0(S,\mathcal{O}_S(l+1))-d\geq l+2$$
so once $l\geq 2$, we get that $h^0(S,\mathcal{O}_S(l+1))-d\geq 4$. We can
thus apply Proposition \ref{prokeypropdp2} and thus we proved  that $z-h_2$ is effective when ${\rm deg}\,z\geq 1+h^0(S,\mathcal{O}_S(2))=8$, unless we are in one of the following cases :
\begin{eqnarray}\label{eqmauvaistroplplusdeux}{\rm deg}\,z=h^0(S,\mathcal{O}_S(l+1)),\,\,{\rm or}\,\, {\rm deg}\,z=h^0(S,\mathcal{O}_S(l+1))-1.\end{eqnarray}
We now treat  the  missing cases  (\ref{eqmauvaistroplplusdeux}). Let $z':=z+h_2$ and $d':={\rm deg}\,z'=d+2$. Then
$$d'=h^0(S,\mathcal{O}_S(l+1))+2,\,\,{\rm  resp.}\,\,d'=h^0(S,\mathcal{O}_S(l+1))+1.$$ Furthermore
$$h^0(S,\mathcal{O}_S(l+2))-d'=2(l+2)-2,\,\,
{\rm resp.}\,\, h^0(S,\mathcal{O}_S(l+2))-d'=2(l+2)-1.$$
We can thus  apply  Proposition \ref{prokeypropdp2} to $z'$ with $s=4$ once $2(l+2)-2\geq 8$, that is $l\geq 3$.  So we proved that, in the  cases  (\ref{eqmauvaistroplplusdeux}),
$$z'-2h_2=z-h_2$$ is effective if $l\geq 3$. When $l=2$ and (\ref{eqmauvaistroplplusdeux}) holds, we have $d=13$ or $d=12$.
Putting everything together, we have proved  by induction that any effective cycle $z\in {\rm CH}_0(S)$ can be written
as
\begin{eqnarray}\label{eqpresquedp2fin} z=\pm z'+\gamma h_2\,\,{\rm in}\,\,{\rm CH}_0(S), \end{eqnarray} where $z'$ is effective of degree $\leq 13$. However, there is an involution of $S$ which follows from the existence of the  degree $2$  morphism  $S\rightarrow \mathbb{P}^2$ given by the anticanonical system. This  involution $\iota$ has the property that for any $z\in {\rm CH}_0(S)$,
\begin{eqnarray}\label{eqtructrucfin} z+\iota(z)=({\rm deg}\,z)h_2
\,\,{\rm in}\,\,{\rm CH}_0(S).
\end{eqnarray}
Using (\ref{eqtructrucfin}), we can arrange to make the sign in (\ref{eqpresquedp2fin}) positive, so
Theorem \ref{theogrosseborne} is proved.
\end{proof}

We finally prove Theorem \ref{theo2surfdelPsanspoint}(b).
\begin{proof}[Proof of Theorem \ref{theo2surfdelPsanspoint}(b)] By Theorem \ref{theogrosseborne}, it suffices to study the case of  an effective $0$-cycle $z$ of degree $d\leq 11$. When  $10\leq d\leq11$, we argue as follows. In this case,
$$d\leq h^0(S,\mathcal{O}_S(3))-2$$
so that by  Lemma \ref{leeffective}
$$ z':=9h_2-z_d$$
is effective, with ${\rm deg}\,z'<{\rm deg}\,z$. We are thus reduced to the case where $d=8,\,9$. As $h^0(S,\mathcal{O}_S(2))=7$, we can apply in these cases the vector bundle method of Section \ref{secvb} with $l=2$. As we have $h^0(S,\mathcal{O}_S(3))=13$, we get in both cases $h^0(S,\mathcal{O}_S(3))-d\geq 4$, hence Proposition \ref{prokeypropdp2} tells that in both cases, $z-h_2$ is effective of degree $\leq 7$.
\end{proof}

\section{Zero-cycles on del Pezzo surfaces of degree $1$ \label{secdelpezzodeg1}}
In the case of a del Pezzo surface $S$ of degree $d_S=1$, using again the notation $-K_S=:\mathcal{O}_S(1)$, we have
$$h^0(S,\mathcal{O}_S(l))=1+\frac{1}{2}(l^2+l)$$
for $l\geq 0$. Note that $S$ has a point, defined as the base-locus of the linear system $|-K_S|$. As usual we will denote its class by $h_1\in{\rm CH}_0(S)$.
We note that in this case the line bundle $\mathcal{O}_S(3)$ is  very ample, as one sees by looking at its restrictions to the elliptic curves in the pencil $|\mathcal{O}_S(1)|$. The line bundle $\mathcal{O}_S(2)$ is generated by sections and induces a degree $2$ morphism onto a surface of degree $2$  in $\mathbb{P}^3$, which has to be a singular quadric.

The analog of Proposition \ref{prokeyprop} is now
\begin{prop}\label{prokeypropdp1} Let $S$ be a del Pezzo surface of degree $1$ defined over a field $K$ of characteristic $0$. Let $d,\,l\geq 1,\,s$ be three integers such that
\begin{eqnarray}\label{eq1dp1} h^0(S,\mathcal{O}_S(l))=1+\frac{1}{2}(l^2+l) <d\\
\label{eq2dp2}  h^0(S,\mathcal{O}_S(l+1))\geq d+2s.
\end{eqnarray}
Then for any  effective $0$-cycle $z_d\in {\rm CH}_0(S)$, the cycle $z_d-sh_1$ is effective.
\end{prop}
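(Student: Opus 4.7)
The plan is to mirror the proof of Proposition \ref{prokeyprop} (and its degree $2$ del Pezzo analog Proposition \ref{prokeypropdp2}), adapting only the Riemann--Roch numerics to a degree $1$ del Pezzo surface. A key initial observation is that $h_1 \in S(K)$, since it is the base point of the anticanonical pencil $|-K_S|$, so the $0$-cycle $sh_1$ is the class of an effective length-$s$ lci subscheme $Z_s \subset S$ defined over $K$: pick a curvilinear point in the rational fiber of the Hilbert--Chow morphism $S^{[s]} \to S^{(s)}$ over the $K$-point $s h_1$. Similarly, $z_d$ is the class of a length-$d$ lci subscheme $Z_d \subset S$.

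Next, using (\ref{eq1dp1}) and the Serre duality / extension construction recalled in Section \ref{secvb}, we obtain a rank $2$ coherent sheaf $E$ fitting in
\begin{eqnarray*}
0 \to \mathcal{O}_S \to E \to \mathcal{I}_{Z_d}(l+1) \to 0.
\end{eqnarray*}
By Lemma \ref{lepourvb}, for generic $Z_d$ (in $S^{[d]}$) this $E$ is a locally free rank $2$ vector bundle, has a section vanishing exactly along $Z_d$, and satisfies $h^0(S,E) = 1 + h^0(S,\mathcal{I}_{Z_d}(l+1))$. Combined with (\ref{eq2dp2}) this gives $h^0(S,E) \geq 2s+1$, so the subspace of sections vanishing on $Z_s$ is nonzero, providing a section $\sigma'$ of $E$ with $Z_s \subset V(\sigma')$.

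The key technical step, and the main obstacle, is the analog of Lemma \ref{lecasgeneric} / Lemma \ref{lecasgenericdp2}: for $Z_d$ and $Z_s$ generic, one can choose such $\sigma'$ with $V(\sigma')$ of dimension $0$. The argument proceeds by contradiction: if every such $\sigma'$ vanishes along a curve $C$, countability of $\mathrm{Pic}(S)$ and monodromy force $C$ to move in a linear system $|H|$ independent of the $x_i$'s, so $H^0(S, E(-H)) \neq 0$ and the exact sequence plus the very generality vanishing $H^0(S, \mathcal{I}_{Z_d}(M))=0$ for $h^0(S,M) \leq d$ yield $h^0(S, \mathcal{O}_S(l+1) \otimes H^{-1}) \geq d+1$. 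The analog of Claim \ref{claim} must then bound $\mathrm{deg}_H(C)$ using $h^0(S,\mathcal{O}_S(l+1)) - h^0(S,\mathcal{O}_S(l)) = l+1$ (Riemann--Roch in the degree $1$ case); combined with the fact that a mobile $C \in |H|$ passing through $s$ general points satisfies $h^0(S,H) \geq s+1$, this contradicts (\ref{eq2dp2}). The edge cases (small $s$, and $C$ disjoint from $Z_s$) are handled by the same dimension count as in Lemma \ref{lecasgeneric}, only with updated numerical bounds; some care is needed with the fact that $\mathcal{O}_S(2)$ is only globally generated (not very ample), but $\mathcal{O}_S(l+1)$ for $l \geq 1$ plays the role of the ample divisor.

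Finally, once $V(\sigma')$ is $0$-dimensional over the generic parameter space $B = S^{[d]} \times S^{[s]}$, the residual cycle $V(\sigma') - Z_{s,\eta}$ is effective of class $c_2(E_\eta) - [Z_{s,\eta}] = z_{d,\eta} - sh_1$ in $\mathrm{CH}_0(S_\eta)$. Fulton specialization at the $K$-point $([Z_d],[Z_s]) \in B(K)$ preserves effectivity (see \cite[Lemme 2.10]{colliot}), giving $z_d - sh_1$ effective in $\mathrm{CH}_0(S)$, as claimed.
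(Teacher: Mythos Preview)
Your proposal is correct and matches the paper's approach exactly: the paper's own proof consists of the single sentence ``The proof is similar to the proof of Proposition \ref{prokeyprop} and we will not repeat the details here,'' and you have simply written out what this means, adapting the Riemann--Roch numerics (difference $h^0(S,\mathcal{O}_S(l+1))-h^0(S,\mathcal{O}_S(l))=l+1$) and using that $h_1$ is the base point of $|-K_S|$ to produce the length-$s$ subscheme. Your caution about $\mathcal{O}_S(2)$ being only globally generated is sound but essentially unnecessary, since the inequalities with $l\le 2$ only allow $s=0$ or the single case $l=2,\,d=5,\,s=1$, where $\mathcal{O}_S(l+1)=\mathcal{O}_S(3)$ is already very ample.
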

 The proof is similar to the proof of  Proposition \ref{prokeyprop} and we will not repeat the details here.

We will now  prove
 Theorem \ref{theo2surfdelPezzodeg1}(b), which is the following statement.
\begin{theo} \label{theo2surfdelPezzodeg1variant}   Let   $S$ be a  smooth degree-$1$ del Pezzo surface over  a field $K$ of characteristic $0$. Then  any  effective $0$-cycle $z\in{\rm CH}_0(S)$ can be written as
$$ z=  \pm z'+\gamma h_1,
 $$
 where $z'$ is effective of degree $15,\,10,\,7,\,6$ or $\leq 4$.
\end{theo}
\begin{proof}
Let $z_d\in {\rm CH}_0(S)$ be an effective $0$-cycle of degree $d\geq 5$. We choose $l$ such that
$$h^0(S,\mathcal{O}_S(l))<d\leq  h^0(S,\mathcal{O}_S(l+1)).$$
Note that, in particular $l\geq 2$, hence $\mathcal{O}_S(l+1)$ is very ample.

If $d\leq h^0(S,\mathcal{O}_S(l+1))-2$, we get by Lemma \ref{leeffective} that $(l+1)^2h_1-z_d$ is effective, so we can assume up to replacing $z_d$ by  $(l+1)^2h_1-z_d$, that
\begin{eqnarray}\label{eqineqpourdp1first} d\leq \frac{1}{2}(l+1)^2.\end{eqnarray}
It follows from (\ref{eqineqpourdp1first}) that
$$ h^0(S,\mathcal{O}_S(l+1))-d\geq 1+\frac{1}{2}(l+1).$$
As $l\geq 2$,  $1+\frac{1}{2}(l+1)\geq 2$, hence Proposition \ref{prokeypropdp1} applies with $s=1$ and tells that $z_d-h_1$ is effective.

We next have to consider the cases where
\begin{enumerate}
\item \label{i} $d= h^0(S,\mathcal{O}_S(l+1))$,
\item  \label{ii}  $d= h^0(S,\mathcal{O}_S(l+1))-1=\frac{(l+1)(l+2)}{2}$.
\end{enumerate}
In  Case \ref{i}, we replace $z_d$ by $z'=z_d+h_1$, and in Case \ref{ii}, we replace $z_d$ by $z'=z_d+2h_1$. In both cases, we have ${\rm deg}\,z'=h^0(S,\mathcal{O}_S(l+1))+1$.

Furthermore, we have
$$h^0(S,\mathcal{O}_S(l+2))-{\rm deg}\,z'=l+1.$$

In  Case \ref{i}, if $l\geq 3$, we get by Proposition \ref{prokeypropdp1} that
$z'-2h_1$ is effective, hence $z_d-h_1$ is effective.

In Case \ref{ii}, if  $l\geq 5$, , we get by Proposition \ref{prokeypropdp1} that
$z'-3h_1$ is effective, hence $z_d-h_1$ is effective.

In conclusion, when $d\geq 5$, the only cases where we cannot conclude that $\pm z_d-h_1$ is effective of degree strictly smaller than $d$ is when we are in Case \ref{ii}  with $2\leq l\leq 4$ or  in Case \ref{i} with $l= 2$.
 Case \ref{ii}  with $l= 4,\,3,\,2$ provides respectively $d=15,\,10,\,6$. Case \ref{i} with $l= 2$ provides $d=7$. So Theorem \ref{theo2surfdelPezzodeg1}(b) is proved.
\end{proof}
\begin{proof}[Proof of Theorem \ref{theo2surfdelPezzodeg1}(a)] In view of Theorem \ref{theo2surfdelPezzodeg1variant}, it suffices to prove the following
\begin{lemm}\label{lefindefin} Let $S$ be a del Pezzo surface of degree $1$ defined over a field $K$ of characteristic $0$. Then for any effective cycle $z$ of degree $d\leq 15$, we can write
$$z=\lambda h_1-z'\,\,{\rm in}\,\in{\rm CH}_0(S),$$
where $\lambda\in\mathbb{Z}$ and $z'$ is effective of degree $\leq 15$.
\end{lemm}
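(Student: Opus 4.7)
The plan is to cover every possible degree $d$ in $\{0,1,\ldots,15\}$ by choosing a line bundle $\mathcal{O}_S(l)$ (very ample for $l\geq 3$ on a degree $1$ del Pezzo, as noted at the start of Section \ref{secdelpezzodeg1}) so that either Lemma \ref{leeffective} or Lemma \ref{levariant} produces an effective cycle of the form $\lambda h_1-z$ of degree $\leq 15$. Since $c_1(\mathcal{O}_S(l))^2=l^2\,h_1$ in $\mathrm{CH}_0(S)$, the two lemmas will output $\lambda=l^2$ and $\lambda=l(l+1)$ respectively; the task is to match the value of $d$ with an admissible $l$.

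For the range $0\leq d\leq 14$, I would apply Lemma \ref{leeffective} to the very ample line bundle $H=\mathcal{O}_S(l)$. The hypothesis is $d\leq h^0(S,\mathcal{O}_S(l))-2=\tfrac{l(l+1)}{2}-1$, and the output cycle $z'=l^2 h_1-z$ has degree $l^2-d$, which we want to be at most $15$. A direct check using $h^0(S,\mathcal{O}_S(3))=7$, $h^0(S,\mathcal{O}_S(4))=11$, $h^0(S,\mathcal{O}_S(5))=16$ shows:
\begin{itemize}
\item $l=3$ handles $0\leq d\leq 5$: $z'=9h_1-z$ is effective of degree $9-d\leq 9$;
\item $l=4$ handles $6\leq d\leq 9$: $z'=16h_1-z$ is effective of degree $16-d\leq 10$;
\item $l=5$ handles $10\leq d\leq 14$: $z'=25h_1-z$ is effective of degree $25-d\leq 15$.
\end{itemize}
In each case we obtain $z=\lambda h_1-z'$ with $z'$ effective of degree $\leq 15$.

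The main (and only) genuine obstacle is the boundary value $d=15$, which is just out of reach for Lemma \ref{leeffective}: with $l=5$ we would need $d\leq 14$, while with $l=6$ we get $l^2-d=36-15=21>15$. Here I would invoke Lemma \ref{levariant} with $l=5$ instead. Its hypotheses hold: $\mathcal{O}_S(5)$ is globally generated, $h^0(S,\mathcal{O}_S(5))=16\geq d+1$, and $h^0(S,\mathcal{O}_S(6))-d=22-15=7>2=h^0(S,\mathcal{O}_S(1))$. The lemma then yields that $z'=l(l+1)\,c_1(\mathcal{O}_S(1))^2-z=30\,h_1-z$ is effective, of degree exactly $30-15=15\leq 15$. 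Taking $\lambda=30$ completes the remaining case and hence the proof.
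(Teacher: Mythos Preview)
Your proof is correct and follows essentially the same approach as the paper: applying Lemma \ref{leeffective} with $l=5$ for $10\leq d\leq 14$ and with smaller $l$ for $d\leq 9$, and then invoking Lemma \ref{levariant} with $l=5$ for the boundary case $d=15$ to obtain $z'=30h_1-z$. The only cosmetic difference is that the paper uses $l=4$ uniformly for $1\leq d\leq 9$ (giving $z'=16h_1-z$ of degree $\leq 15$) rather than splitting this range into the $l=3$ and $l=4$ subcases.
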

\begin{proof} If ${\rm deg}\,z=15$, using the fact that $h^0(S,\mathcal{O}_S(5))=16$ and Lemma \ref{levariant}, we find that $z':=30 h_1-z$ is effective. As ${\rm deg}\,z'=15$, the lemma is proved in this case. If $10\leq {\rm deg}\,z\leq 14$, Lemma \ref{leeffective} tells that $z':=25 h_1-z$ is effective. As ${\rm deg}\,z'\leq 15$, the lemma is proved in this case. If $1\leq {\rm deg}\,z\leq 9$, Lemma \ref{leeffective} tells that $z':=16 h_1-z$ is effective, hence the lemma is fully proved.
\end{proof}
Theorem \ref{theo2surfdelPezzodeg1}(a) is now proved.
\end{proof}

\end{document}